\newtheorem{theorem}{Theorem}[section]
\newtheorem{corollary}[theorem]{Corollary}
\newtheorem{lemma}[theorem]{Lemma}
\newtheorem{prop}[theorem]{Proposition}
\newtheorem{remark}[theorem]{Remark}
\theoremstyle{definition}
\newtheorem{defn}[theorem]{Definition}
\newtheorem{example}[theorem]{Example}
\numberwithin{equation}{section}
\newcommand{\nc}{\newcommand}\nc{\br}{\overline}
\nc{\FH}{\mathcal H} \nc{\CC}{\mathbb C}\nc{\DD}{\mathcal D}
\nc{\Cc}{\mathcal C}\nc{\JJ}{\mathbb J}\nc{\II}{\mathcal I}
\nc{\PP}{\mathbb P} \nc{\KK}{\mathbb K} \nc{\RR}{\mathbb R}
\nc{\LL}{\mathcal L} \nc{\Ll}{\ell} \nc{\NN}{\mathbb
N} \nc{\ZZ}{\mathbb Z} \nc {\HH}{\mathbb H} \nc
\nc{\lra}{\longrightarrow} \nc{\bdot}{\bullet}
\nc{\w}{\omega}
\nc{\dd}{\mathcal{D}}\nc{\im}{\mathrm{Im}}\nc{\Nij}{\mathrm{Nij}}\nc{\Jac}{\mathrm{Jac}}
\nc{\sgn}{\mathrm{sgn}}
\newcommand{\Hom}{\mathrm{Hom}}
\newcommand{\delbar}{\overline{\partial}}
\nc{\MM}{\mathcal{M}}\nc{\Har}{\mathcal{H}} \nc{\zed}{\mathcal{Z}}
\nc{\bb}[1]{\mathbb{#1}}
\nc{\image}{\mathrm{Im}\ }
\newcommand{\IP}[1]{\langle #1\rangle}
\nc{\ba}{\overline} \nc{\del}{\partial} \nc{\AAA}{\mathcal{A}}
\nc{\de}{\delta}\nc{\debar}{\overline{\delta}} \nc{\FF}{\mathcal{F}}\nc{\GG}{\mathcal{G}}\nc{\EE}{\mathcal{E}}
\nc{\Ja}{e^{\tfrac{\pi}{2}\JJ_1}} \nc{\Jb}{e^{\tfrac{\pi}{2}\JJ_2}}
\nc{\eps}{\epsilon}\nc{\id}{\mathrm{id}}\nc{\Dir}{\mathrm{Dir}}\nc{\SO}{\mathrm{SO}}
\nc{\IPS}[1]{#1}\nc{\veps}{\varepsilon}\nc{\Cour}[1]{[#1]}\nc{\Diff}{\mathrm{Diff}}
\nc{\ad}{\mathrm{ad}} \nc{\UU}{\mathcal{U}}
\nc{\Aa}{\mathcal A}\nc{\Bb}{\mathcal{B}}\nc{\Pic}{\mathrm{Pic}}\nc{\type}{\mathrm{type}}
\nc{\TT}{\mathbb{T}}\nc{\T}{\mathcal{T}}
\nc{\cl}{\mathrm{cl}}\nc{\isom}{\cong}
\nc{\Z}{\mathcal{Z}}\nc{\Tot}{\mathrm{Tot}}\nc{\Dol}{\mathrm{Dol}}\nc{\hol}{\mathrm{hol}}
\title{\bf Generalized K\"ahler geometry}
\author{Marco Gualtieri}
\date{}
\definecolor{tocolor}{rgb}{.1,.1,.5}
\definecolor{urlcolor}{rgb}{.2,.2,.6}
\definecolor{linkcolor}{rgb}{.1,.1,.6}
\definecolor{citecolor}{rgb}{.6,.2,.1}
\begin{document}

\maketitle 

\abstract{ Generalized K\"ahler geometry is the natural analogue of K\"ahler geometry, in the context of generalized complex geometry.  Just as we may require a complex structure to be compatible with a Riemannian metric in a way which gives rise to a symplectic form, we may require a generalized complex structure to be compatible with a metric so that it defines a second generalized complex structure.  We explore the fundamental aspects of this geometry, including its equivalence with the bi-Hermitian geometry on the target of a 2-dimensional sigma model with $(2,2)$ supersymmetry, as well as the relation to holomorphic Dirac geometry and the resulting derived deformation theory.  We also explore the analogy between pre-quantum line bundles and gerbes in the context of generalized K\"ahler geometry. 
}
\begingroup
\hypersetup{linkcolor=tocolor}
\tableofcontents\pagebreak
\endgroup
\section*{Introduction}

Generalized K\"ahler geometry is the natural Riemannian geometry associated to a generalized complex structure in the sense of Hitchin.  Just as in K\"ahler geometry, which involves a complex structure compatible with a symplectic form, a generalized K\"ahler structure derives from a compatible pair of generalized complex structures.  A fundamental feature of generalized geometry is that it occurs on a manifold equipped with a Courant algebroid, a structure characterized by a class in the third cohomology with real coefficients.  If this class is integral, the Courant algebroid may be thought of as arising from a rank 1 abelian gerbe.  We view this gerbe as the analogue of the prequantum line bundle in the theory of geometric quantization of symplectic manifolds. 

Just as for K\"ahler manifolds, the existence of a generalized K\"ahler structure places strong constraints on the underlying manifold; indeed, we shall see that the manifold inherits a pair of usual complex structures $(I_+,I_-)$, which need not be isomorphic as complex manifolds. Interestingly, generalized K\"ahler structures may exist on complex manifolds which admit no K\"ahler metric; indeed, if the background Courant algebroid has nonzero characteristic class, we shall see that the complex structures must fail to satisfy the $\del\delbar$-lemma and hence cannot be algebraic or even Moishezon. 

The main result of this paper is that generalized K\"ahler geometry is equivalent to a bi-Hermitian geometry discovered by Gates, Hull and Ro\v{c}ek in 1984~\cite{MR776369}, which arises on the target of a 2-dimensional sigma model upon imposing $N=(2,2)$ supersymmetry.  This equivalence, first shown in~\cite{Gualtieri:rp}, was followed by a number of results, such as those contained in~\cite{Gualtieri:ef,MR2298627, MR2217300,MR2323543,MR2249799,MR2322555,MR2371181,Goto:2007if,Gualtieri:2007fe, Goto:2009ix}, which are not easily accessed purely from the bi-Hermitian point of view. The proof presented here is more transparent than that in~\cite{Gualtieri:rp}.

The rest of the paper is concerned with understanding the relationship between the complex structures $(I_+, I_-)$ participating in the bi-Hermitian pair, from the point of view of generalized complex geometry.  We show that on each of the complex manifolds, we obtain a holomorphic Courant algebroid, which splits as a sum of transverse holomorphic Dirac structures, a situation which fits into the formalism developed by Liu, Weinstein and Xu in~\cite{MR1472888}.  Having this structure on each complex manifold, we may describe the relationship between them as a Morita equivalence between a Dirac structure on $I_+$ and its counterpart on $I_-$.  We also explain how to interpret these facts from the point of view of the prequantum gerbe, following and extending some of the work of Hull, Lindstr\"om, Ro\v{c}ek, von Unge, and Zabzine~\cite{MR2607445, MR2276362, MR2565045} on the relation between gerbes and generalized K\"ahler geometry. 

Among other things, we do not discuss the natural Bismut connections which can be used to describe and study generalized K\"ahler manifolds.  These appeared in the original work on the bi-Hermitian structure~\cite{MR776369}, and their relationship to the Courant bracket was explored in~\cite{Gualtieri:rp, MR2253158, Gualtieri:2007fe} in connection with generalized K\"ahler geometry.  We shall describe the connection approach elsewhere.  Also, we do not present the Hodge decomposition of the twisted cohomology of a generalized K\"ahler manifold, as it shall appear elsewhere~\cite{Gualtieri:ef}.

The paper is structured as follows: In Section~\ref{Courantalgebroids}, we develop the theory of holomorphic Courant algebroids, and establish the relationship between gerbes with connection and Dirac structures in Courant algebroids.  In Section~\ref{Generalizedkahlergeometry}, we define generalized K\"ahler structures, establish their equivalence with the bi-Hermitian geometry in~\cite{MR776369}, and provide several examples of generalized K\"ahler structures.  In Section~\ref{Holomorphicdiracgeometry}, we show that the generalized K\"ahler condition induces holomorphic Courant algebroids and transverse holomorphic Dirac structures on the bi-Hermitian pair, leading to a Morita equivalence relation between the constituent Lie algebroids. Finally, we explain some of the structure induced on a prequantum gerbe by the presence of a generalized K\"ahler structure.

I am grateful to Nigel Hitchin for many insights throughout the project, which began as a part of my doctoral thesis under his supervision.  I also thank Sergey Arkhipov, Henrique Bursztyn, Gil Cavalcanti, Eckhard Meinrenken and Maxim Zabzine for valuable discussions.  This research was supported by a NSERC Discovery grant.


\section{Courant algebroids}\label{Courantalgebroids}

In generalized geometry, we study geometrical structures not on the tangent bundle of a manifold but rather on the sum $\TT M:=T M\oplus T^*M$ of the tangent and cotangent bundles.  We require the geometry to be compatible, in some sense, with the orthogonal structure induced on $\TT M$ by the natural non-degenerate symmetric pairing:
\begin{equation*}
\IP{X+\xi,Y+\eta}:=\tfrac{1}{2}(\xi(Y)+\eta(X)).
\end{equation*}
Further, we may require that the geometrical structure satisfy an integrability condition, which is usually expressed using the Courant bracket on sections of $\TT M$:
\begin{equation*}
[X+\xi,Y+\eta] := [X,Y]+L_X\eta -i_Yd\xi.
\end{equation*}	
This bracket satisfies a Jacobi identity but fails to be a Lie bracket because it is not skew-symmetric. Its failure to be skew, however, is exact, and can be measured with the symmetric pairing:
\begin{equation}\label{skewfail}
[X+\xi,X+\xi] = di_X\xi = d\IP{X+\xi,X+\xi}.
\end{equation}
A striking feature of the Courant bracket is that it has symmetries fixing the underlying space $M$.  Any closed 2-form $B\in\Omega^{2,\cl}(M)$ acts on $\TT M$, preserving the Courant bracket, via the bundle map: 
\begin{equation}\label{Bfield}
e^B:X+\xi\mapsto X+\xi + i_X B.
\end{equation}
Because of this symmetry, we may ``twist'' or modify the global structure of $\TT M$ as an orthogonal bundle with a Courant bracket, keeping its local structure unchanged.  These twisted structures are therefore classified by a characteristic class in $H^1(\Omega^{2,\cl}(M))$ and are called \emph{exact Courant algebroids}~\cite{MR1472888, MR2023853, MR2171584}. 

\begin{example}\label{courconstruct}
Given a \v{C}ech cocycle $B_{ij}\in\Omega^{2,\cl}(U_{ij})$ with respect to an open cover $\{U_i\}$, we may construct a Courant algebroid in the following way.  Define a vector bundle $E$ by gluing $(\TT M)|_{U_i}$ to $(\TT M)|_{U_j}$ using the isomorphism $e^{B_{ij}}$.  The cocycle condition implies that $e^{B_{ij}}e^{B_{jk}} = e^{B_{ik}}$, so that $E$ is well-defined.  Since the $B_{ij}$ are symmetries, $E$ inherits an orthogonal structure as well as a Courant bracket.  Furthermore, from~\eqref{Bfield}, we see that the gluing maps respect the projection $\pi:\TT M\lra TM$. Hence the bundle $E$ is naturally an extension:
\begin{equation}\label{ext}
\xymatrix{0\ar[r] & T^*M\ar[r]^\iota&E\ar[r]^\pi & TM\ar[r] & 0}
\end{equation} 
\end{example}
For convenience, we provide a definition of Courant algebroid following the above line of thought.
\begin{defn}\label{Courant}
An (exact\footnote{The Courant algebroids we consider in this paper are not of the most general type considered in~\cite{MR1472888}, but are referred to as exact Courant algebroids; we will omit the adjective.}) Courant algebroid $(E,\pi,q,[\cdot,\cdot])$ is a vector bundle $E$ which is an extension of the form~\eqref{ext}, with a symmetric pairing $\IP{\cdot,\cdot} := (q(\cdot))(\cdot)$ given by a self-dual isomorphism of extensions $q:E\lra E^*$, and a bracket $[\cdot,\cdot]$ on its sheaf of sections such that, locally, there is a splitting of $\pi$ inducing an isomorphism with the orthogonal structure and Courant bracket on $\TT M$. 
\end{defn}
Also, we shall call any local splitting of $\pi$ inducing an isomorphism with the structure of $\TT M$ (as in the definition) a \emph{local trivialization} of the Courant algebroid.  Note that the local trivializations over an open set $U$ form a torsor for the group $\Omega^{2,\cl}(U)$. 

\subsection{Holomorphic Courant algebroids}

In the smooth category, the sheaf $\Omega^{2,\cl}$ of smooth closed real 2-forms has the acyclic resolution:
\begin{equation}\label{res}
\xymatrix{0\ar[r]& \Omega^{2,\cl}\ar[r] & \Omega^2\ar[r]^d & \Omega^3\ar[r]^d&\cdots},
\end{equation}
so that $H^1(\Omega^{2,\cl}(M,\RR))$ is isomorphic to $H^3(M,\RR)$.  For an explicit cocycle representing the isomorphism class of $E$, we may choose a splitting $s:TM\lra E$ of the sequence~\eqref{ext}, determining a closed 3-form measuring the failure of the splitting to be involutive: 
\begin{equation}\label{hache}
H(X,Y,Z) := \IP{s(X),[s(Y),s(Z)]}.
\end{equation}
In this way, we recover the classification of smooth exact Courant algebroids by \v{S}evera in~\cite{sevwein}.  

In our study of generalized K\"ahler geometry, we will need to understand Courant algebroids in the holomorphic category.  In this case, the sequence~\eqref{ext} may not split, and the sheaves in~\eqref{res} consist of holomorphic forms, so that the resolution is no longer acyclic.  From the exact sequence of sheaves:
\begin{equation*}
\xymatrix{0\ar[r]&\Omega^{2,\cl}\ar[r]&\Omega^2\ar[r]^\del & \Omega^{3,\cl}\ar[r]&0 },
\end{equation*}
we obtain the long exact sequence:
\begin{equation}\label{les}
\xymatrix{H^0(\Omega^2)\ar[r]^\del &H^0(\Omega^{3,\cl})\ar[r] & H^1(\Omega^{2,\cl})\ar[r] & H^1(\Omega^2)\ar[r]^\del & H^1(\Omega^{3,\cl})}.
\end{equation}
If our complex manifold satisfies the $\del\delbar$-lemma, then the left- and right-most maps in~\eqref{les} vanish and $H^0(\Omega^{3,\cl})=H^0(\Omega^3)$, exhibiting the classification of holomorphic Courant algebroids as an extension:
\begin{equation}\label{holc}
\xymatrix{0\ar[r] & H^0(\Omega^3)\ar[r] & H^1(\Omega^{2,\cl})\ar[r]& H^1(\Omega^2)\ar[r]& 0}
\end{equation}
We may interpret this as follows: given a holomorphic Courant algebroid $E$, the map to $H^1(\Omega^2)$ represents the isomorphism class of the extension~\eqref{ext}, where we observe that the extension class, which a priori lies in $H^1(\Omega^1\otimes\Omega^1)$, is forced to be skew by the orthogonal structure $q$.  On the other hand, the inclusion $H^0(\Omega^3)\hookrightarrow H^1(\Omega^{2,\cl})$ can be seen from the fact that the bracket $[\cdot,\cdot]$ of any Courant algebroid $E$ may be modified, given a holomorphic $(3,0)$-form $H\in H^0(\Omega^3)$, as follows:
\begin{equation*}
[e_1,e_2]_H := [e_1,e_2] + \iota(i_{\pi(e_1)}i_{\pi(e_2)}H).
\end{equation*}
A warning is in order, however, since we shall encounter complex manifolds which are non-K\"ahler, and hence do not necessarily satisfy the $\del\delbar$-lemma, and so the short exact sequence~\eqref{holc} may not hold.  For the general case, it will be useful to have a Dolbeault resolution of $\Omega^{2,\cl}$, as follows.  First resolve the sheaf using the local $\del\delbar$-lemma:
\begin{equation*}
\xymatrix{0\ar[r] & \Omega^{2,\cl}\ar[r] & \Z_\del^{2,0}\ar[r]^\delbar & \Z_\del^{2,1}\ar[r]^\delbar &\cdots},
\end{equation*} 
where $\Z_\del^{p,q}$ is the sheaf of smooth $\del$-closed $(p,q)$ forms. Then use the Dolbeault resolution for each $\Z_\del^{p,q}$ given by $\del$ to conclude that $H^1(\Omega^{2,\cl})$ is given by the first total cohomology of the double complex
\begin{equation}\label{resolu}
\xymatrix{
\Omega^{4,0} & &\\
\Omega^{3,0}\ar[u]^{\del}\ar[r]^\delbar & \Omega^{3,1} & \\
\Omega^{2,0}\ar[u]^{\del}\ar[r]^\delbar & \Omega^{2,1}\ar[u]^{\del}\ar[r]^\delbar &\Omega^{2,2} 
}
\end{equation}
\begin{prop}\label{classhol}
Holomorphic Courant algebroids are classified up to isomorphism by
\begin{equation}\label{holcour}
H^1(\Omega^{2,\cl})=\frac{(\Omega^{3,0}(M)\oplus \Omega^{2,1}(M))\cap\ker d}{d(\Omega^{2,0}(M))}.
\end{equation}
\end{prop}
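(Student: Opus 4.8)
The plan is to prove the proposition in two independent steps. First I would establish that isomorphism classes of holomorphic Courant algebroids are in natural bijection with the first \v{C}ech (equivalently, sheaf) cohomology group $H^1(\Omega^{2,\cl})$, by showing that such an algebroid is nothing but a gluing of copies of the standard $\TT M$ along its symmetries. Second, I would compute this cohomology group explicitly via the Dolbeault double complex~\eqref{resolu}, obtaining the quotient on the right-hand side of~\eqref{holcour}.

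For the first step, let $E$ be a holomorphic Courant algebroid. By Definition~\ref{Courant}, $E$ admits local trivializations, so over a suitable open cover $\{U_i\}$ there are isomorphisms $\phi_i:E|_{U_i}\lra \TT M|_{U_i}$ carrying the extension~\eqref{ext}, the pairing $q$, and the bracket to their standard counterparts. On each overlap $U_{ij}$ the transition $\phi_i\circ\phi_j^{-1}$ is an automorphism of the standard data on $\TT M|_{U_{ij}}$ fixing the base. As recorded after Definition~\ref{Courant}, such automorphisms form a torsor for $\Omega^{2,\cl}(U_{ij})$: any bundle map fixing $\iota$ and $\pi$ has the shear form $e^{B}$ of~\eqref{Bfield}, compatibility with $q$ forces the tensor $B$ to be skew, and compatibility with the Courant bracket forces $dB=0$, so $B\in\Omega^{2,\cl}$. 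Thus $\phi_i\circ\phi_j^{-1}=e^{B_{ij}}$ with $B_{ij}\in\Omega^{2,\cl}(U_{ij})$, and the identity $\phi_i\phi_j^{-1}\cdot\phi_j\phi_k^{-1}=\phi_i\phi_k^{-1}$ yields the cocycle condition $B_{ij}+B_{jk}=B_{ik}$. Conversely, Example~\ref{courconstruct} reconstructs a Courant algebroid from any such cocycle. Two cocycles $B_{ij}$ and $B'_{ij}$ (on a common refinement) yield isomorphic algebroids precisely when a global isomorphism, written in the trivializations as $e^{\beta_i}$ with $\beta_i\in\Omega^{2,\cl}(U_i)$, intertwines them; the compatibility $e^{\beta_i}e^{B_{ij}}=e^{B'_{ij}}e^{\beta_j}$ reads $B'_{ij}-B_{ij}=\beta_i-\beta_j$, that is, the two cocycles differ by a \v{C}ech coboundary. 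Passing to the limit over covers identifies the set of isomorphism classes with $\check H^1(\Omega^{2,\cl})=H^1(\Omega^{2,\cl})$.

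For the second step, I would read off $H^1(\Omega^{2,\cl})$ as the first cohomology of the total complex of the double complex~\eqref{resolu}. Because $\del$ and $\delbar$ anticommute, the total differential may be taken to be $d=\del+\delbar$, which squares to zero without any sign modification. In total degree one the summands are $\Omega^{3,0}\oplus\Omega^{2,1}$, and for $\alpha\in\Omega^{3,0}$, $\beta\in\Omega^{2,1}$ the type components of $d(\alpha+\beta)$ are $\del\alpha\in\Omega^{4,0}$, $\delbar\alpha+\del\beta\in\Omega^{3,1}$, and $\delbar\beta\in\Omega^{2,2}$; hence $(\alpha,\beta)$ is a total cocycle precisely when $\alpha+\beta\in\ker d$. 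In total degree zero the only summand is $\Omega^{2,0}$, on which the total differential is $\gamma\mapsto\del\gamma+\delbar\gamma=d\gamma$, so the coboundaries form exactly $d(\Omega^{2,0})$. Taking the quotient yields~\eqref{holcour}.

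The main obstacle is the first step, specifically the claim that the structure group of the torsor of local trivializations is exactly $\Omega^{2,\cl}$ rather than some larger group of Courant automorphisms: one must verify that preserving $\iota$, $\pi$, $q$, and the bracket simultaneously leaves no freedom beyond closed $B$-field transformations. This is the input that makes the gluing purely abelian and hence classified by an $H^1$; the paper already isolates it in the torsor statement following Definition~\ref{Courant}, so I would simply record the three implications (shear form, skewness, closedness) noted above, the last being the converse to the assertion in~\eqref{Bfield}. The second step is then routine, the only care being the verification that the diagonal total differential genuinely coincides with $d$, which follows from $\del\delbar=-\delbar\del$ and leaves the quotient~\eqref{holcour} unchanged.
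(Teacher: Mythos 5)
Your proposal is correct and takes essentially the same route as the paper: the paper likewise (implicitly) classifies holomorphic Courant algebroids by \v{C}ech gluing of copies of $\TT M$ along the closed $2$-form symmetries~\eqref{Bfield}, using the torsor statement after Definition~\ref{Courant} and the construction of Example~\ref{courconstruct}, and then identifies $H^1(\Omega^{2,\cl})$ with the first total cohomology of the double complex~\eqref{resolu}, whose degree-one cocycles and coboundaries are exactly $(\Omega^{3,0}(M)\oplus\Omega^{2,1}(M))\cap\ker d$ and $d(\Omega^{2,0}(M))$. Your explicit verifications of the shear/skew/closed torsor argument and of the coincidence of the total differential with $d=\del+\delbar$ merely fill in steps the paper leaves implicit, so there is no substantive divergence.
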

\begin{example}\label{genex}
Given a Dolbeault representative $T+H$ for a class in~\eqref{holcour}, with $T\in\Omega^{2,1}$ and $H\in\Omega^{3,0}$, we may construct a corresponding holomorphic Courant algebroid.  Viewing $T$ as a map $T:T_{1,0}M\lra T^*_{0,1}M\otimes T^*_{1,0}M$, define the following partial connection on sections of $E=T_{1,0}M\oplus T^*_{1,0}M$:
\begin{equation*}
\overline D = \begin{pmatrix}\delbar& 0\\T&\delbar\end{pmatrix}:\Gamma^\infty(E)\lra \Gamma^\infty(T^*_{0,1}M\otimes E),
\end{equation*}
where $\delbar$ are the usual holomorphic structures on the tangent and cotangent bundles. $\overline D$ squares to zero and defines a new holomorphic structure on the complex bundle $E$, which then becomes a holomorphic extension of $T_{1,0}$ by $T^*_{1,0}$. The symmetric pairing on $E$ is the usual one obtained from the duality pairing, but the bracket is twisted as follows:
\begin{equation}\label{twcour}
[X+\xi,Y+\eta] = [X,Y] + L_X\eta - i_Y\xi + i_Xi_YH.
\end{equation}
Under the assumption that $d(T+H)=0$, this bracket is well-defined on the sheaf of $\overline D$-holomorphic sections of $E$, and defines a holomorphic Courant algebroid, as required. 
\end{example}

\begin{example}\label{hopf}
Consider the standard Hopf surface $X = (\CC^2\backslash\{0\})/\ZZ$, where $\ZZ$ acts by $\varphi:(x^1,x^2)\mapsto (2x^1,2x^2)$.  This is a complex manifold diffeomorphic to $S^1\times S^3$, and hence it does not admit a K\"ahler structure.  The Hodge numbers $h^{p,q}$ all vanish except $h^{0,0}=1$ , $h^{0,1}=1$, $h^{2,1}=1$ and $h^{2,2}=1$.  Hence, the group~\eqref{holcour} classifying holomorphic Courant algebroids is $H^1(\Omega^2)\isom\CC$.  The Courant algebroids in this 1-parameter family may be described explicitly using Example~\ref{genex}, but also holomorphically, as follows. The union of the two elliptic curves $E_1 = \{x_1=0\}$ and $E_2=\{x_2=0\}$ form an anticanonical divisor, corresponding to the meromorphic section $B = c(x_1x_2)^{-1}dx_1\wedge dx_2,\ c\in\CC$. Glue $\TT(X\backslash E_1)$ to $\TT(X\backslash E_2)$ using the holomorphic closed 2-form $B$ to obtain a Courant algebroid with modified extension class.
\end{example}

On any complex manifold, there is an injection of sheaves from the holomorphic closed 2-forms to the smooth real closed 2-forms, which we denote, for disambiguation,
\begin{equation*}
\Omega^{2,\cl}_{\hol}\lra \Omega^{2,\cl}_{\infty}(\RR).
\end{equation*}
This morphism is given by $B^{2,0}\mapsto \tfrac{1}{2}(B^{2,0}+\overline{B^{2,0}})$. For this reason we have a map $H^1(\Omega^{2,\cl}_{hol})\lra H^1(\Omega^{2,\cl}_\infty)$; indeed, the underlying real vector bundle of any holomorphic Courant algebroid is itself a smooth real Courant algebroid.  
\begin{example}\label{hopf2}
The real Courant algebroid on $X=S^3\times S^1$ corresponding to the holomorphic Courant algebroid described in Example~\ref{hopf} is easily obtained by choosing a Dolbeault representative of the \v{C}ech cocycle $\tfrac{1}{2}(B+\overline{B})$. For $B=c(x_1x_2)^{-1}dx_1\wedge dx_2,\ c\in\CC$, we obtain a class in $H^3(X,\RR)$ which evaluates on the fundamental cycle of $S^3$ to $-4\pi^2\mathrm{Re}(c)$.
\end{example}

\subsection{Gerbes with connection}

In this section we will describe the relationship between $\CC^*$-gerbes and Courant algebroids, and in particular what the meaning of a Dirac structure is from the point of view of a gerbe.  This is useful for understanding pre-quantization conditions in generalized K\"ahler geometry but is not necessary for understanding the geometry per se.  We will take the \v{C}ech approach of~\cite{MR1197353, MR1876068, Chatterjee} to the description of gerbes, omitting  discussions of refinements of covers, for convenience. We treat gerbes in both the smooth and holomorphic categories, indicating differences as we go.  

First, we review the basic method of working with gerbes at the \v{C}ech level. Let $M$ be a smooth real or complex manifold, where $\OO_M$ denotes the sheaf of complex-valued functions (smooth or holomorphic, respectively).  Choose an open covering $\{U_i\}$, and let $G$ be a $\CC^*$-gerbe which is locally trivialized over this covering, so that it is given by the data $\{L_{ij}, \theta_{ijk}\}$, where $L_{ij}$ are (smooth or holomorphic) complex line bundles over $U_{ij}$, chosen so that $L_{ij}$ is dual to $L_{ji}$, and  $\theta_{ijk}:L_{ij}\otimes L_{jk}\lra L_{ik}$ are isomorphisms of line bundles over $U_{ijk}$ such that on quadruple overlaps $U_{ijkl}$ we have the coherence condition:  
\begin{equation*}\label{gluger}
\theta_{ikl}\circ(\theta_{ijk}\otimes \id) = \theta_{ijl}\circ(\id\otimes\theta_{jkl}).
\end{equation*}
Two local trivializations of the same gerbe differ by a collection of line bundles $\{L_i\rightarrow U_i\}$, which affect the above data via:
\begin{equation*}
\{L_i\}:(L_{ij},\theta_{ijk})\longmapsto(L_{ij}\otimes L_i\otimes L_j^{*},\ \ \ \theta_{ijk}\otimes\id_{L_i\otimes L_{k}^{*}}).
\end{equation*}
A global trivialization, or \emph{object}, of $G$ may be described, with respect to the local trivialization above, by line bundles $\{L_i\}$ together with isomorphisms $m_{ij}:L_i\lra L_{ij}\otimes L_j$, such that on triple overlaps we have: 
\begin{equation*}
m_{ik}=(\theta_{ijk}\otimes\id_{L_k})\circ (\id_{L_{ij}}\otimes m_{jk})\circ m_{ij}.
\end{equation*}
Two global objects $S=\{L_i,m_{ij}\}, S'=\{L'_i,m'_{ij}\}$ of a gerbe differ by a global line bundle $L_{SS'}$ defined by $L_{SS'}|_{U_i}:= L^*_i\otimes L'_i$. As a result we may define a category of objects, where 
\begin{equation*}
\Hom(S,S')= \Gamma(M,L_{SS'}). 
\end{equation*}
Finally, an \emph{equivalence} $G\rightarrow G'$ of gerbes is a global trivialization of $G^*\otimes G'$, where duality and  tensor product of gerbes is defined in the obvious way. An auto-equivalence is a global object in the trivial gerbe: hence it is a global line bundle.
$\CC^*$-gerbes are classified up to equivalence by 
$H^2(\OO^*_M)$, where $\OO^*_M\subset \OO_M$ is the subsheaf of nowhere-vanishing functions.  

We now introduce the notion of a gerbe connection over an arbitrary Lie algebroid $A$.  Let $(A,a,[\cdot,\cdot])$ be a complex Lie algebroid on $M$, where $[\cdot,\cdot]$ is the Lie bracket on the sheaf of sections of $A$, and $a:A\lra\mathrm{Der}(\OO_M)$ is the bracket-preserving bundle map to the tangent bundle, usually called the anchor.  We will use the notation $(\Omega^\bullet_A, d_A)$ to denote the associated de Rham complex of $A$.  Note that if $A$ is a holomorphic Lie algebroid, the anchor maps to the holomorphic tangent bundle, whereas in the smooth case it maps to $TM\otimes\CC$. 

\begin{defn}\label{Acon} An $A$-connection on a line bundle $L$ is a differential operator 
\begin{equation*}
\del:\OO(L)\lra \OO(A^*\otimes L),
\end{equation*}
such that $\del(fs) = (d_Af)\otimes s +f\del s$, for $f\in\OO_M$ and $s\in\OO(L)$. As with usual connections, $\del$ has a curvature tensor $\del^2=F_\del\in\Omega^2_A(M)$ such that $d_AF_\del = 0$. When $F_\del= 0$, we say that $L$ is flat over $A$, or that $L$ is an $A$-module.  
\end{defn}

\begin{defn}\label{gacon} An $A$-connection $(\del,B)$ on the gerbe $G$ defined by $\{L_{ij},\theta_{ijk}\}$ is given as follows.  The first component, $\del$, called the $0$-connection, is a family of $A$-connections $\del_{ij}$ on $L_{ij}$ with $\del_{ji} = \del_{ij}^*$ and such that $\theta_{ijk}$ is flat in the induced connection:
\begin{equation*}
\theta_{ijk}\circ (\del_{ij}\otimes 1 + 1\otimes \del_{jk}) = \del_{ik}\circ\theta_{ijk}.
\end{equation*}
The second component, $B$, called the $1$-connection, is a collection $\{B_i\in \Omega^2_A(U_i)\}$ satisfying $B_j - B_i= F_{\del_{ij}}$. The global 3-form $H\in\Omega^3_A(M)$ defined by $H|_{U_i} = d_AB_i$ is called the curving of the connection, and satisfies $d_A H = 0$. When $H=0$, we say that $G$ is flat over $A$.  
\end{defn}
Let $S:G\rightarrow G'$ be an equivalence of gerbes, and choose trivializations as above so that the object $S$ in $G^*\otimes G'$ is given by $\{L_i,m_{ij}\}$, where $m_{ij}$ are isomorphisms
\begin{equation*}
m_{ij}:L_i\lra (L^*_{ij}\otimes L'_{ij})\otimes L_j.
\end{equation*}
Also, let $G,G'$ be equipped with $A$-connections $(\del,B),(\del',B')$ as above.  Then, by definition, to promote $S$ to an equivalence of gerbes with connection is to equip $L_i$ with $A$-connections $\del_i$ such that 
\begin{equation*}
{m_{ij}}_*(\del_i) = \del_{ij}^* + \del'_{ij} + \del_j\ \ \ \ \text{and}\ \ \ \ B_i - B'_i = F_{\del_i}.
\end{equation*}   
An auto-equivalence with connection is then simply a line bundle with connection $(L,\del)$; its action is only seen by the data defining the 1-connection, via $B_i\mapsto B_i - F_\del|_{U_i}$.
Gerbes with $A$-connections are classified up to equivalence by the hypercohomology group
\begin{equation*}
\HH^2(\xymatrix{\OO^*_M\ar[r]^{d_A\log} &\Omega^1_A\ar[r]^{d_A}& \Omega^2_A}).
\end{equation*}

As in the case of holomorphic vector bundles, where the existence of a holomorphic connection is obstructed by the Atiyah class, the existence of an $A$-connection on a gerbe is obstructed in general.  We now briefly summarize the treatment of the obstructions given in~\cite{Chatterjee}. 

Arbitrarily choose $A$-connections $\del_{ij}$ on $L_{ij}$, so that $\theta_{ijk}\in \OO(L_{ik}\otimes L_{kj}\otimes L_{ji})$ is not necessarily flat for the induced connection $\del_{ikj}$:
\begin{equation*}
\del_{ikj}\theta_{ijk} = A_{ijk}\otimes\theta_{ijk}.
\end{equation*} 
This defines a \v{C}ech cocycle $\{A_{ijk}\in \Omega^1_A(U_{ijk})\}$, which represents the 0-Atiyah class $\alpha_0=[A_{ijk}]\in H^2(\Omega^1_A)$ obstructing the existence of a $0$-connection on the gerbe. 
If $\alpha_0=0$, then there exists a $0$-connection $\{\del_{ij}\}$, and the curvatures $\{F_{\del_{ij}}\in\Omega^2_A(U_{ij})\}$ define the $1$-Atiyah class $\alpha_1\in H^1(\Omega^2_A)$ obstructing the existence of a $1$-connection.   

The following is an example of how $A$-connections on gerbes may be used. It is an analog of the well-known result for complex vector bundles that a flat partial $(0,1)$-connection induces a holomorphic structure on the bundle. It is essentially a realization of the following isomorphism:
\begin{equation*}
H^2(\OO^*_{\hol}) \isom \HH^2(\xymatrix{\OO^*_{\infty}\ar[r]^{\delbar\log} & \Omega^{0,1}\ar[r]^\delbar & \Omega^{0,2}\ar[r]^\delbar & \Omega^{0,3}}).
\end{equation*}
\begin{theorem}\label{cangerb}
Let $G$ be a smooth $\CC^*$-gerbe over a complex manifold $M$, and let $A=T_{0,1}M$ be the Dolbeault Lie algebroid.  The choice of a flat $A$-connection on $G$ naturally endows the gerbe with a holomorphic structure.
\end{theorem}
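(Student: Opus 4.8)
The plan is to realize a flat $A$-connection as a hypercohomology cocycle for the resolution of $\OO^*_{\hol}$ advertised just before the statement, and then to integrate that cocycle, via the line-bundle analogue of the Koszul--Malgrange theorem, into genuine holomorphic transition data on $G$. Throughout, $A=T_{0,1}M$ means $d_A=\delbar$ and $\Omega^p_A=\Omega^{0,p}$.

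First I would unwind the data. After refining the cover so that every $L_{ij}$ is trivial, choose sections $\sigma_{ij}$ and write $\theta_{ijk}(\sigma_{ij}\otimes\sigma_{jk})=g_{ijk}\sigma_{ik}$ with $g_{ijk}\in\OO^*_\infty(U_{ijk})$, $\del_{ij}\sigma_{ij}=a_{ij}\otimes\sigma_{ij}$ with $a_{ij}\in\Omega^{0,1}(U_{ij})$, and keep the $B_i\in\Omega^{0,2}(U_i)$. Translating the axioms of Definition~\ref{gacon}: coherence of $\theta$ gives $\delta g=1$; flatness of $\theta$ in the induced connection gives $\delta a=\delbar\log g$ (with $\delta$ the \v{C}ech coboundary); $F_{\del_{ij}}=\delbar a_{ij}$, so the $1$-connection relation reads $\delta B=\delbar a$; and flatness of the gerbe, $H=0$, is exactly $\delbar B_i=0$ on each $U_i$. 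Thus $(g_{ijk},a_{ij},B_i)$ is a total-degree-$2$ cocycle in the \v{C}ech--Dolbeault bicomplex of $\OO^*_\infty\xrightarrow{\delbar\log}\Omega^{0,1}\xrightarrow{\delbar}\Omega^{0,2}\xrightarrow{\delbar}\cdots$, where the top-row condition $\delbar B_i=0$ is supplied precisely by flatness.

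Next I would invoke that this complex of sheaves is a resolution of $\OO^*_{\hol}$: its kernel at the left is $\Ker(\delbar\log)=\OO^*_{\hol}$, while the Dolbeault--Grothendieck lemma provides exactness further right (a local $\delbar$-closed $(0,1)$-form equals $\delbar h$ for a smooth $h$, and $\delbar h=\delbar\log e^h$). Consequently its hypercohomology agrees with $H^\bullet(\OO^*_{\hol})$, whose $\HH^2$ classifies holomorphic gerbes, so our cocycle determines such a class. To make the holomorphic structure explicit rather than merely produce a class, I would run the zig-zag concretely: since $\delbar B_i=0$, the Dolbeault lemma yields $\beta_i\in\Omega^{0,1}(U_i)$ with $B_i=\delbar\beta_i$; replacing the connection forms by $a'_{ij}=a_{ij}-(\beta_j-\beta_i)$ leaves $\delta a'=\delbar\log g$ intact while forcing $\delbar a'_{ij}=0$. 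Hence the adjusted partial connections $\del'_{ij}$ are flat $(0,1)$-connections on the $L_{ij}$; by Koszul--Malgrange each $L_{ij}$ acquires a holomorphic structure, and the $\del'$-flatness of $\theta_{ijk}$ (the relation $\delta a'=\delbar\log g$) makes the $\theta_{ijk}$ holomorphic. This is the sought holomorphic structure on $G$.

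I expect the main obstacle to be naturality rather than any single computation. The $\beta_i$ are ambiguous up to $\delbar$-closed $(0,1)$-forms, and different choices alter the holomorphic structures on the individual $L_{ij}$; what must be checked is that all such choices yield \emph{equivalent} holomorphic gerbes, so that the holomorphic gerbe attached to a flat $A$-connection is canonical up to equivalence. This is the statement that the total-cohomology class in $H^2(\OO^*_{\hol})$ is independent of the zig-zag, which follows from the resolution being a quasi-isomorphism, but it must be spelled out to justify the word ``naturally.'' The only genuinely analytic input is the local $\delbar$-Poincar\'e lemma; everything else is the homological bookkeeping of passing between the two sides of the stated isomorphism.
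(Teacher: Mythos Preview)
Your proposal is correct and follows essentially the same route as the paper: both arguments use the flatness condition $\delbar B_i=0$ to choose Dolbeault potentials $\beta_i$ (the paper calls them $A_i$), then modify the $0$-connections on $L_{ij}$ by $\beta_i-\beta_j$ to render them flat, hence holomorphic, making $\theta_{ijk}$ holomorphic as well. The only cosmetic difference is that you trivialize the $L_{ij}$ and phrase everything as a \v{C}ech--Dolbeault zig-zag, whereas the paper works directly with line bundles and connections; the content is identical.

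One point worth sharpening: you say naturality ``is the statement that the total-cohomology class in $H^2(\OO^*_{\hol})$ is independent of the zig-zag.'' That only gives a well-defined \emph{isomorphism class} of holomorphic gerbe, which is weaker than what ``naturally endows'' promises---namely, a holomorphic structure on $G$ itself, canonical up to canonical equivalence. The paper spends most of its proof on exactly this: it constructs explicit equivalences $S_{AA'}:\GG_A\to\GG_{A'}$ between the holomorphic gerbes arising from different potentials (using the line bundles $(U_i\times\CC,\delbar+\beta_i'-\beta_i)$), verifies that for three choices these equivalences commute up to canonical isomorphism satisfying a tetrahedral coherence, and then repeats the exercise for changes of the smooth local trivialization $\{L_{ij},\theta_{ijk}\}$. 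You correctly flag this as the main obstacle, but your appeal to the quasi-isomorphism alone does not discharge it; you would need to carry out the same coherence checks.
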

\begin{proof}
Choose a smooth trivialization $\{L_{ij},\theta_{ijk}\}$ for the gerbe, and let $\{{D}_{ij}, B_i\}$ be the given flat $A$-connection.  Since $\delbar B_i=0$, we make a choice of potential $A=\{A_i\in\Omega^{0,1}(U_i)\}$ such that $\delbar A_i = B_i$ (refining the cover as necessary).  Then the trivial auto-equivalence of $G$, equipped with the $A$-connection $\delbar + A_i$, defines an equivalence of gerbes with connection:
\begin{equation*}
(L_{ij},\theta_{ijk}, {D}_{ij}, B_i)\lra (L_{ij},\theta_{ijk}, D_{ij}+A_i-A_j,0).
\end{equation*}
The resulting $A$-connections $D_{ij}+A_i-A_j$ are therefore flat, rendering both $L_{ij}$ and $\theta_{ijk}$ holomorphic, and therefore defining a holomorphic gerbe $\GG_A$, which a priori depends on $A$.  A different choice $A'=\{A_i':\delbar A_i'=B_i\}$, similarly, defines a holomorphic gerbe $\GG_{A'}$. But, $\delbar(A_i'-A_i)=0$, so that we obtain an equivalence of holomorphic gerbes
\begin{equation}\label{difpot}
S_{AA'}:\GG_A\lra \GG_{A'},
\end{equation}
using the line bundles $L_i:=U_i\times\CC$ equipped with the holomorphic structures $\delbar + A_i'-A_i$.  This shows that the holomorphic gerbes $\GG_A,\GG_{A'}$ are canonically equivalent. Furthermore, for three choices $A, A', A''$ of potentials for $\{B_i\}$, there is a canonical isomorphism commuting the diagram 
\begin{equation}
\xymatrix@R=1.4em@C=.2em{&\GG_{A'}\ar[dr]^{S_{A'A''}} & \\\GG_A\ar[ru]^{S_{AA'}}\ar[rr]_{S_{AA''}} &  & \GG_{A''}}
\end{equation}
in the sense that there is a canonical isomorphism of functors $S_{A'A''}\circ S_{AA'}\to S_{AA''}$.  This isomorphism necessarily satisfies the tetrahedral coherence condition for a quadruple of   potentials.  Thus we have defined a holomorphic gerbe $\GG$ independent of the choice of potential. Note that this holomorphic gerbe does not come with a preferred holomorphic local trivialization.

We now show that $\GG$ is independent of the original local trivialization $\{L_{ij}, \theta_{ijk}, {D}_{ij}, B_i\}$ of the gerbe with connection.  Changing the trivialization using local line bundles with connection $\{L_i,{D}_i\}$, we obtain new trivialization data $\{\tilde L_{ij}, \tilde\theta_{ijk}, \tilde{D}_{ij}, \tilde B_i\}$, given by:
\begin{equation}\label{chngerbcon}
(L_{ij}\otimes L_i\otimes L_j^*,\ \ \ \theta_{ijk}\otimes\id_{L_i\otimes L_{k}^*},\ \ \ {D}_{ij}+{D}_i + {D}_j^*,\ \ \  B_i - F_{{D}_i}).
\end{equation}
Choosing $A=\{A_i:\delbar A_i = B_i\}$ and $\tilde A=\{\tilde A_i:\delbar \tilde A_i = \tilde B_i\}$, we obtain holomorphic gerbes $\GG_A,\GG_{\tilde A}$ by the procedure above.  But the choice of $A,\tilde A$ immediately implies that $F_{D_i} = \delbar(\tilde A_i - A_i)$, and therefore that $D_i + \tilde A_i -  A_i$ defines a holomorphic structure on $L_i$, so that $(L_i,D_i + \tilde A_i - A_i)$ defines a holomorphic equivalence $S_{A\tilde A}:\GG_A\to\GG_{\tilde A}$, as required. Finally, we observe that there is a canonical isomorphism commuting the diagram:
\begin{equation*}
\xymatrix@R=1.2em{\GG_{A}\ar[d]_{S_{AA'}}\ar[r]^{S_{A\tilde A}} & \GG_{\tilde A}\ar[d]^{S_{\tilde A \tilde A'}}\\ \GG_{A'}\ar[r]_{S_{A'\tilde A'}} & \GG_{\tilde A'}}
\end{equation*}
Combining this isomorphism with that from~\eqref{difpot}, we obtain canonical isomorphisms commuting each face of the diagram relating each of the gerbes obtained from different choices ($A, A', A''$ are potentials for $\{B_i\}$, while $\tilde A,\tilde A',\tilde A''$ are potentials for $\{\tilde B_i\}$):
\begin{equation*}
\xymatrix@C=1em@R=.5em{
\GG_{A}\ar[dd]\ar[dr]\ar[rr]  &  &  \GG_{\tilde A}\ar'[d][dd]\ar[dr]  & \\
   &\GG_{A'}\ar[dl]\ar[rr]&           &\GG_{\tilde A'}\ar[dl]\\
\GG_{A''}\ar[rr]&  & \GG_{\tilde A''}& \\ }
\end{equation*}
The composition of all the isomorphisms commuting the faces of the above diagram yields the identity map on any of the edges.  This establishes that the holomorphic gerbe $\GG$ is independent of the initial local trivialization.
\end{proof}

Our purpose in introducing connections on gerbes is twofold. First, taking $A$ to be the tangent bundle\footnote{We could consider any Lie algebroid $A$, obtaining Courant algebroids which are extensions of $A$ by $A^*$.}, we associate, following~\cite{MR2013140}, a canonical Courant algebroid $E_\del$ to a $0$-connection $\del$ on a gerbe.  Second, we show that any Dirac structure $D\subset E_\del$ induces a flat $D$-connection on the gerbe.     
\begin{theorem}\label{corresp}
To every $0$-connection $\del$ (over the complexified or holomorphic tangent bundle) on a gerbe $G$, there is a canonically associated Courant algebroid $E_\del$, with isomorphism class given by the map
\begin{equation}\label{gerbcour}
\HH^2(\xymatrix{\OO^*_M\ar[r]^-{d\log} &\Omega^1})\stackrel{d}{\lra} \HH^2(\xymatrix{0\ar[r]&\Omega^2\ar[r]^{d}&\Omega^3\ar[r]^d&\Omega^4}) = H^1(\Omega^{2,\cl}).
\end{equation}
Furthermore, global splittings of $E_\del\stackrel{\pi}{\lra}TM$ with isotropic image correspond bijectively with $1$-connections on $(G,\del)$. A connection is flat if and only if the corresponding splitting is involutive. 
\end{theorem}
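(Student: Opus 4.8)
The plan is to treat the three assertions in turn, building $E_\del$ from the curvature of the $0$-connection $\del$ and then reading off both the $1$-connection and the flatness condition geometrically.

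First I would observe that the curvatures $F_{\del_{ij}}$ of the $0$-connection form a \v{C}ech $1$-cocycle valued in the closed $2$-forms: each is closed by the Bianchi identity $d_A F_{\del_{ij}} = 0$, and the flatness of $\theta_{ijk}$ in the induced connection forces the additive relation $F_{\del_{ij}} + F_{\del_{jk}} = F_{\del_{ik}}$ on triple overlaps. This is precisely the data required by the gluing construction of Example~\ref{courconstruct}, so I would define $E_\del$ (following~\cite{MR2013140}) by gluing the local models $(\TT M)|_{U_i}$ via the symmetries $e^{F_{\del_{ij}}}$ of~\eqref{Bfield}; the result is an exact Courant algebroid whose isomorphism class is represented by the cocycle $\{F_{\del_{ij}}\}$ in $H^1(\Omega^{2,\cl})$. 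To match this with the image of the class of $(G,\del)$ under the map~\eqref{gerbcour}, I would write out the \v{C}ech--de Rham total complexes computing the two hypercohomology groups and verify that the connecting map induced by $d$ carries the representative $(\theta_{ijk}, \del_{ij})$ of the $0$-connection to the cocycle $\{d(\del_{ij})\} = \{F_{\del_{ij}}\}$ representing $E_\del$.

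Second, for the correspondence with $1$-connections, I would use that a local isotropic splitting of $\pi$ over $U_i$ is exactly a map $s_i : X \mapsto X + i_X B_i$ for some $2$-form $B_i$, since~\eqref{Bfield} shows every $2$-form yields a section and antisymmetry kills the pairing $\IP{s_i(X), s_i(Y)}$. A global isotropic splitting of $E_\del$ is then a collection $\{B_i\}$ whose local sections agree under the gluing maps, and the computation $e^{F_{\del_{ij}}}(X + i_X B_j) = X + i_X(B_j + F_{\del_{ij}})$ shows the matching condition is exactly the relation $B_j - B_i = F_{\del_{ij}}$ defining a $1$-connection in Definition~\ref{gacon}. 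This establishes the asserted bijection.

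Finally, for the flatness statement I would compute the failure-of-involutivity $3$-form $H(X,Y,Z) = \IP{s(X), [s(Y), s(Z)]}$ of~\eqref{hache} in the trivialization over $U_i$. Inserting $s_i(X) = X + i_X B_i$ into the Courant bracket and the pairing, a direct calculation yields $H|_{U_i} = d B_i$; this is globally defined since $B_j - B_i$ is closed, and is exactly the curving of the $1$-connection. Hence the splitting is involutive if and only if $dB_i = 0$ for every $i$, that is, if and only if the curving vanishes and the connection is flat. I expect the main obstacle to be the first part: carefully arranging the two \v{C}ech--de Rham double complexes so that the hypercohomology map~\eqref{gerbcour} is visibly realized by $\del_{ij} \mapsto F_{\del_{ij}}$, thereby identifying the abstract isomorphism class of $E_\del$ with the image class. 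The second and third parts reduce to the standard identification of isotropic splittings with $B$-fields and the classical fact that the characteristic $3$-form of such a splitting is $dB$.
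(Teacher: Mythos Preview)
Your construction of $E_\del$ and the identification of splittings with $1$-connections follows the same route as the paper, and the flatness argument is fine. However, you have omitted a step the paper treats as essential: verifying that $E_\del$ is \emph{canonically} associated to $(G,\del)$, independent of the local trivialization used to define it.

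Your construction takes as input a specific local trivialization $\{L_{ij},\theta_{ijk},\del_{ij}\}$ and produces a Courant algebroid $E_a$ glued from the cocycle $\{F_{\del_{ij}}\}$. But another local trivialization, differing from the first by local line bundles with connection $g=\{L_i,D_i\}$, yields a different cocycle $\{F_{\del_{ij}} + F_{D_i} - F_{D_j}\}$ and hence a priori a different algebroid $E_{a^g}$. The paper addresses this by exhibiting an explicit isomorphism $\psi_g:E_a\to E_{a^g}$, given locally by $\psi_g|_{U_i}=e^{F_{D_i}}$, and checking that it intertwines the gluing maps (a commutative square of $B$-field transforms) and that $\psi_{g_1g_2}=\psi_{g_1}\psi_{g_2}$ for composable changes of trivialization. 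Only after this does one have a well-defined object $E_\del$ attached to the abstract gerbe with $0$-connection rather than to a presentation of it. The same issue recurs for the bijection with $1$-connections: you should check that the map $B\mapsto s_B$ is compatible with $\psi_g$ under the change $B_i\mapsto B_i-F_{D_i}$, so that the bijection is itself canonical. None of this is difficult, but without it the word ``canonically'' in the statement is not justified, and indeed Remark~\ref{gauge} immediately following the theorem relies on precisely this analysis of $\psi_g$.
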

\begin{proof}
Let the $0$-connection be given by $a=\{\del_{ij}\}$ in a local trivialization of the gerbe, and define the Courant algebroid $E_a$ by the procedure in Example~\ref{courconstruct}, gluing $\TT{U_i}$ to $\TT{U_j}$ using $F_{\del_{ij}}$.  We must check that the Courant algebroid is independent of $a$.  Change the local trivialization, using local line bundles with connection $g=\{L_i,D_i\}$, so that the $0$-connection is given by the collection $a^g=\{\del_{ij} + D_i + D_j^*\}$ of connections on $L_{ij}\otimes L_i\otimes L_j^*$.  Hence $E_{a^g}$ is constructed using $F_{\del_{ij}} + F_{D_i} - F_{D_j}$.  But then we obtain a map $\psi_g:E_a\lra E_{a^g}$ defined by $\psi_g|_{U_i} = e^{F_{D_i}}$, which is an isomorphism of Courant algebroids because it intertwines the gluing maps with isomorphisms of the Courant structure, i.e. the following diagram commutes:
\begin{equation*}
\xymatrix@C=4em{
\TT U_i\ar[d]_{e^{F_{D_i}}}\ar[r]^{e^{F_{\del_{ij}}}} & \TT U_j\ar[d]^{e^{F_{D_j}}}\\
\TT U_i\ar[r]_{e^{F_{\del_{ij}}+ F_{D_i} - F_{D_j}}} & \TT U_j
}
\end{equation*}
Functoriality follows from the fact that if $g = g_1g_2$ is the tensor product of local line bundles with connection then $\psi_g = \psi_{g_1}\psi_{g_2}$.  Hence we obtain a well-defined Courant algebroid $E_\del$ associated to the $0$-connection.

Trivializing the line bundles $L_{ij}$ so that the gerbe is given by data $g_{ijk}\in\OO^*(U_{ijk})$ and $\del_{ij}$ is given by connection $1$-forms $A_{ij}$, we see that the gluing $2$-forms for the Courant algebroid are simply $dA_{ij}$; hence $[E_\del]$ is indeed given by $d[(G,\del)]$ as in~\eqref{gerbcour}. 

A $1$-connection $B$ on $(G,\del)$ consists of 2-forms $B_i\in\Omega^2(U_i)$ with $B_j-B_i=F_{\del_{ij}}$, data which determines a splitting $s_{B}$ of $E_a\stackrel{\pi}{\lra} TM$ defined by $s_B|_{TU_i}(X) = X + i_X B_i\in\TT U_i$: clearly $e^{F_{\del_{ij}}}\circ s_B|_{TU_i} = s_B|_{TU_j}$ on $U_{ij}$, rendering $s_B$ well-defined.  The map $B\mapsto s_B$ is clearly a bijection, since isotropic splittings of $\TT U_i\lra TU_i$ are simply graphs of $2$-forms.  The bijection is functorial because a change of local trivialization $g=\{L_{i},D_i\}$  maps $B=\{B_i\}$ to $B^g = \{B_i - F_{D_i}\}$, so that the following diagram commutes:
\begin{equation*}
\xymatrix{
E_a\ar[d]_{\psi_g} & TM\ar[l]_{s_B}\ar[d]^{\id}\\
E_{a^g} & TM\ar[l]_{s_{B^g}}
}
\end{equation*}
Finally, the graph of a 2-form in $\TT M$ is involutive if and only if it is closed, hence the splitting $s_B$ is involutive if and only if the gerbe is flat over $TM$. 
\end{proof}
\begin{remark}\label{gauge}
A self-equivalence (i.e. gauge transformation) of a gerbe with $0$-connection is given by tensoring by a global line bundle with connection $(L,D)$.  This does not affect the gerbe with $0$-connection, but according to the above result, $\psi_g$ need not be the identity map: indeed, in this case $F_{D}$ defines a global closed (integral) $2$-form, so that $\psi_g$ is an automorphism of $E_\del$ of the form $\psi_g = e^{F_D}$. 
\end{remark}

Proposition~\ref{gerbcour} is stated for the smooth complexified tangent bundle or the holomorphic tangent bundle.  To obtain smooth real Courant algebroids, we equip the gerbe with a Hermitian structure~\cite{MR1876068} and require the $0$-connection $\del$ to be unitary.  We explain this in detail below.
\begin{defn}
A Hermitian structure on the gerbe $G$ defined by $\{L_{ij},\theta_{ijk}\}$ is given by a family of Hermitian metrics $h=\{h_{ij}\}$ on the complex line bundles $L_{ij}$, such that $\theta_{ijk}$ is unitary.   A connection on $G$ defined by $\{\nabla_{ij},B_i\}$ is unitary when $\nabla_{ij}$ are unitary connections (with \emph{real} curvatures, by convention) and $B_i$ are real.
\end{defn}
Two local trivializations of a Hermitian gerbe with unitary connection differ by local Hermitian line bundles $\{L_i, h_i, \nabla_i\}$ with unitary connection; this acts on the gluing data as in~\eqref{chngerbcon}, with the local Hermitian metrics $h_{ij}$ mapped to $h_{ij}h_ih_j^{-1}$.
 
\begin{corollary}\label{unitgerb}
To every unitary $0$-connection $\nabla = \{\nabla_{ij}\}$ (over the real tangent bundle) on a Hermitian gerbe $(G,h)$, there is a canonically associated real Courant algebroid $E_\nabla$, with isomorphism class given by the map
\begin{equation}\label{surjonint}
\HH^2(\xymatrix{\OO(U(1))\ar[r]^-{-id\log} &\Omega^1_\RR})\stackrel{d}{\lra} H^1(\Omega^{2,\cl}_\RR) = H^3(M,\RR).
\end{equation}
The correspondence between splittings and $1$-connections is as before.
\end{corollary}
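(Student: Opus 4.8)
The plan is to treat this statement as the real, unitary refinement of Theorem~\ref{corresp}: the construction of the Courant algebroid and all of the functoriality arguments carry over essentially verbatim, so the only genuinely new points to verify are that the Hermitian structure together with unitarity of $\nabla$ forces the glued object to be a \emph{real} Courant algebroid (rather than merely one over $TM\otimes\CC$), and that its class accordingly lands in the integral part of $H^3(M,\RR)$. I would organize the argument to run in parallel with the proof of Theorem~\ref{corresp}, flagging at each stage where reality enters.

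First I would build $E_\nabla$ exactly as before, applying Example~\ref{courconstruct} to glue $\TT U_i$ to $\TT U_j$ by the transformation $e^{F_{\nabla_{ij}}}$ of~\eqref{Bfield}. The cocycle condition required by Example~\ref{courconstruct}, namely $F_{\nabla_{ij}}+F_{\nabla_{jk}}=F_{\nabla_{ik}}$ on triple overlaps, follows by taking curvatures of the flatness relation for $\theta_{ijk}$. The crucial observation is that, by the unitarity convention (real curvatures), each $F_{\nabla_{ij}}$ is a real closed $2$-form, so $e^{F_{\nabla_{ij}}}$ is a symmetry of the real orthogonal structure and real Courant bracket on $\TT U_{ij}$; hence $E_\nabla$ is glued from copies of the real $\TT U_i$ and is itself a real Courant algebroid. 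For independence of the unitary $0$-connection and trivialization, a change of Hermitian trivialization is given by local Hermitian line bundles with unitary connection $\{L_i,h_i,\nabla_i\}$, again with real curvature $F_{\nabla_i}$; the map $\psi_g|_{U_i}=e^{F_{\nabla_i}}$ intertwines the two gluing data and is now a transformation by a real closed $2$-form, hence an isomorphism of \emph{real} Courant algebroids, with functoriality $\psi_{g_1g_2}=\psi_{g_1}\psi_{g_2}$ as in Theorem~\ref{corresp}.

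For the classification map, I would trivialize the $L_{ij}$ so that the gerbe is given by $U(1)$-valued $g_{ijk}$ and $\nabla_{ij}$ by connection $1$-forms $A_{ij}$, the latter being real under the $-id\log$ convention displayed in~\eqref{surjonint}. Then the Courant gluing $2$-forms are $F_{\nabla_{ij}}=dA_{ij}$, so that $[E_\nabla]$ is precisely the image under $d$ of the hypercohomology class $[(G,\nabla)]$, realizing the map~\eqref{surjonint}; the identification of the target with $H^3(M,\RR)$ is via the acyclic resolution~\eqref{res}. The correspondence between real $1$-connections $B=\{B_i\}$, with $B_i\in\Omega^2_\RR(U_i)$ and $B_j-B_i=F_{\nabla_{ij}}$, and isotropic splittings $s_B(X)=X+i_XB_i$ of $E_\nabla\to TM$, together with the equivalence of flatness and involutivity, is then literally the argument of Theorem~\ref{corresp} with real $2$-forms throughout.

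The main obstacle I anticipate is not in the formal gluing, which is inherited from Theorem~\ref{corresp}, but in the reality and integrality bookkeeping: verifying that the unitarity convention genuinely yields real curvatures $F_{\nabla_{ij}}\in\Omega^{2,\cl}_\RR$, so that the $e^{F_{\nabla_{ij}}}$ are symmetries of the real and not merely the complexified structure, and identifying the image of~\eqref{surjonint} with the integral classes in $H^3(M,\RR)$, since the Dixmier--Douady class of a $\CC^*$-gerbe is integral. This integrality is exactly what underlies the prequantization analogy, and is the conceptually essential content beyond the purely complex statement of Theorem~\ref{corresp}.
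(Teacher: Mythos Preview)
Your proposal is correct and matches the paper's intended approach: the paper states this as a corollary of Theorem~\ref{corresp} without giving a separate proof, having just explained that unitarity forces the curvatures $F_{\nabla_{ij}}$ to be real, and your argument is exactly the natural unfolding of that remark. The only caveat is that the integrality of the image of~\eqref{surjonint} is not part of the corollary's statement in the paper (it is invoked later, in Section~\ref{Holomorphicdiracgeometry}), so while your discussion of it is correct and relevant, it is not required here.
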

\begin{example}\label{gcbab}
Let $(G,h)$ be a holomorphic Hermitian gerbe.  Then on each Hermitian holomorphic line bundle $L_{ij}$, we have the unique Chern connection $\nabla_{ij}$.  This defines a canonical unitary $0$-connection $\nabla$ on $G$.  By the corollary, we obtain a canonical real Courant algebroid $E_\nabla$. 

Of note in this example is the fact that the gluing maps $e^{F_{\nabla_{ij}}}:\TT U_i\lra \TT U_j$ defining $E_\nabla$ intertwine the automorphisms 
\begin{equation*}
\JJ_i := \left.\begin{pmatrix}I & 0\\ 0 & -I^*\end{pmatrix}\right|_{U_i}:\TT U_i\lra \TT U_j,
\end{equation*} 
where $I:TM\lra TM$ is the complex structure on the underlying manifold.  In fact, the global operator $\JJ:E_\nabla\lra E_\nabla$, defined by $\JJ|_{U_i} = \JJ_i$, is a simple example of a generalized complex structure~\cite{Gualtieri:qr}; in this case it arises directly as a consequence of choosing a Hermitian structure on a holomorphic gerbe.  

Having the generalized complex structure $\JJ$, a unitary $1$-connection for $(G,h,\nabla)$ compatible with the holomorphic structure may be described as a section $s$ of $\pi:E_\nabla\lra TM$  which is complex-linear, i.e. $\JJ\circ s = s\circ I$. As discussed in~\cite{Chatterjee}, there is not a distinguished unitary $1$-connection. 
\end{example}

\subsection{Gerbes and Dirac structures}\label{gerbdir}

The Courant bracket was initially introduced~\cite{CourWein} because, in a sense, it provides a unified source for many interesting Lie algebroids.  Indeed, because the failure~\eqref{skewfail} of the bracket to be Lie is measured by the symmetric pairing, it follows that any subbundle $D\subset E$ of a Courant algebroid which is isotropic and involutive inherits a Lie algebroid structure, by simply restricting the bracket and projection $\pi$ to the subbundle $D$.  The examples which inspired Courant and Weinstein were those of a bivector field $\pi\in\Gamma(\wedge^2 TM)$ and a 2-form $\omega\in\Omega^2(M)$.  Viewing these tensors as maps $TM\lra T^*M$ and $T^*M\lra TM$ respectively, their graphs $\Gamma_\pi\subset \TT M$, $\Gamma_\omega\subset\TT M$ are always isotropic; they are involutive if and only if $\pi$ is Poisson and $\omega$ is closed.  An involutive isotropic subbundle is called a Dirac structure when it is maximally isotropic, but we shall encounter other, non-maximal examples.  The remainder of this section is concerned with the question of how a gerbe with $0$-connection $(G,\del)$, giving rise to a Courant algebroid $E_\del$, is affected by the presence of an involutive isotropic subbundle $D\subset E_\del$.  

\begin{theorem}\label{diracflat}
Let $E_\del$ be the Courant algebroid associated to a gerbe with $0$-connection $(G,\del)$.  Given an involutive isotropic subbundle $D\subset E_\del$, the gerbe $G$ obtains a canonical flat $D$-connection.
\end{theorem}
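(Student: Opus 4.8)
The plan is to build the two pieces of a $D$-connection in the sense of Definition~\ref{gacon}---a $0$-connection $\{\del^D_{ij}\}$ on the bundles $L_{ij}$ and a $1$-connection $\{B^D_i\}$---directly out of the given data, and then to check that the resulting curving vanishes. Fix a local trivialization $\{L_{ij},\theta_{ijk}\}$ in which $\del$ is given by $\{\del_{ij}\}$, so that, as in the proof of Theorem~\ref{corresp}, $E_\del$ is obtained by gluing $\TT U_i$ to $\TT U_j$ along $e^{F_{\del_{ij}}}$. Because $D\subset E_\del$ is isotropic and involutive it is a Lie algebroid with anchor $a=\pi|_D$, and the natural first move is to define the $0$-connection by pulling $\del$ back along this anchor: $\del^D_{ij}:=a^*\del_{ij}$. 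This is a $D$-connection on $L_{ij}$ satisfying $\del^D_{ji}=(\del^D_{ij})^*$; since $a$ is a morphism of Lie algebroids it keeps $\theta_{ijk}$ flat and has curvature $F_{\del^D_{ij}}=a^*F_{\del_{ij}}$.

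Next I would extract the $1$-connection tautologically from the inclusion $D\hookrightarrow E_\del$. Writing local sections of $D|_{U_i}$ in the trivialization as $d=X+\xi_i$ and $d'=X'+\xi_i'$ with $X=\pi(d)$, $X'=\pi(d')$, set
\begin{equation*}
B^D_i(d,d'):=\xi_i(X').
\end{equation*}
Isotropy of $D$ gives $0=2\IP{d,d'}=\xi_i(X')+\xi_i'(X)$, so $B^D_i(d,d')=-B^D_i(d',d)$ is skew and defines an element of $\Omega^2_D(U_i)$. The gluing rule for $E_\del$ gives $\xi_j=\xi_i+i_XF_{\del_{ij}}$ on overlaps, whence
\begin{equation*}
B^D_j(d,d')-B^D_i(d,d')=F_{\del_{ij}}(X,X')=(a^*F_{\del_{ij}})(d,d'),
\end{equation*}
which is exactly the relation $B^D_j-B^D_i=F_{\del^D_{ij}}$ demanded of a $1$-connection. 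Thus $(\del^D,B^D)$ is a bona fide $D$-connection on $G$.

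The crux---and the step I expect to be the main obstacle---is flatness, i.e. the vanishing of the curving $H|_{U_i}=d_DB^D_i$. Expanding the Lie algebroid differential $d_D$ (whose bracket is the Courant bracket restricted to $D$) and inserting the formula $[X+\xi,Y+\eta]=[X,Y]+L_X\eta-i_Yd\xi$, I expect the six terms to collapse, after the Lie-derivative and $d\xi$ contributions cancel against one another, to
\begin{equation*}
(d_DB^D_i)(d_0,d_1,d_2)=2\IP{[d_0,d_1],d_2}.
\end{equation*}
Here the right-hand side is already totally antisymmetric: antisymmetry in the first two slots follows from polarizing the exactness~\eqref{skewfail} of the symmetric part of the bracket, and antisymmetry in the last two slots from the compatibility of the bracket with the pairing, both combined with $\IP{d,d'}=0$ on $D$. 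The genuine difficulty is bookkeeping---tracking the Courant-bracket signs and the interior-product identities so that the non-tensorial derivative terms really do cancel---rather than anything conceptual. Granting the identity, flatness is immediate: involutivity gives $[d_0,d_1]\in\Gamma(D)$ and isotropy then forces $\IP{[d_0,d_1],d_2}=0$, so $d_DB^D_i=0$.

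It remains to justify that the connection is \emph{canonical}, i.e. independent of the local trivialization. A change of trivialization by local line bundles with connection $\{L_i,D_i\}$ alters the gluing data as in~\eqref{chngerbcon} and modifies $E_\del$ by the isomorphism $\psi_g=e^{F_{D_i}}$ of Theorem~\ref{corresp}. Since the subbundle $D$, its anchor $a$, and the inclusion $D\hookrightarrow E_\del$ are intrinsic, the local cotangent components $\xi_i$---and hence $(\del^D,B^D)$---transform precisely by the induced equivalence of $D$-connections, so the flat $D$-connection they define depends only on $D$.
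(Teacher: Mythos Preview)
Your proof is correct and follows essentially the same route as the paper: the $0$-connection is the pullback of $\del_{ij}$ along the anchor, and your $1$-connection $B^D_i(d,d')=\xi_i(X')$ is exactly the paper's restriction of the antisymmetric pairing $\IP{\cdot,\cdot}_-=\tfrac{1}{2}(\xi(Y)-\eta(X))$ to $D$, since isotropy forces $\xi_i(X')=-\xi_i'(X)$. The only real difference is that for flatness the paper simply invokes Courant's Theorem~2.3.6 (that $\IP{\cdot,\cdot}_-$ restricted to an involutive isotropic subbundle is $d_D$-closed), whereas you sketch the direct computation yielding $d_DB^D_i\propto\IP{[d_0,d_1],d_2}$; your argument is a self-contained proof of that cited fact, and the paper's canonicity check is slightly more explicit than yours but amounts to the same transformation law under $g\mapsto g^D=\{L_i,a^*D_i\}$.
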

\begin{proof}
The restriction of $\pi:E_\del\lra TM$ as well as the Courant bracket to $D$ gives it the structure of a Lie algebroid, and by choosing a local trivialization for $(G,\del)$, we immediately obtain a $0$-connection $\del^D$ over $D$ by composition: 
\begin{equation*}
\del^D_{ij}:=\pi|_D^*\circ\del_{ij}.     
\end{equation*}
To obtain the $1$-connection $B$ over $D$, write the inclusion $D\subset E_\del$ locally, as involutive isotropic subbundles $D_i\subset \TT U_i$ such that $e^{F_{\del_{ij}}}D_i = D_j$.  Then consider the antisymmetric pairing on $\TT U_i$:
\begin{equation*}
\IP{X+\xi,Y+\eta}_- := \tfrac{1}{2}(\xi(Y)-\eta(X)).
\end{equation*}    
This restricts to $D_i$ and determines 2-forms $B_i\in\Omega^2_D(U_i)$. The gluing condition $e^{F_{\del_{ij}}}D_i = D_j$ implies that $B_i-B_j = \pi|_D^*F_{ij} = F_{\del^D_{ij}}$, 
so that $\{\del^D_{ij},B_i\}$ is indeed a $D$-connection. 

We now check that the $D$-connection is independent of the local trivialization used to define it.  In a local trivialization differing from the initial one by the local line bundles with connection $g=\{L_{i},\del_{i}\}$, the $0$-connection over $D$ is given by $\pi|_D^*(\del_{ij} + \del_i +\del_j^*)$, and the effect on $E_\del$ is via the isomorphism $\psi_g$, which sends $D_i$ to $e^{F_{\del_i}}D_i$, so that the restriction of the antisymmetric pairing to $D_i$ yields $B_i + \pi|_D^*F_{\del_i}$.  The resulting expression for the $D$-connection is precisely that obtained by changing the local trivialization of $(G,\del^D)$ by
the local line bundles with $D$-connection $g^D:=\{L_i, \pi|_D^*\del_i\}$.  The naturality of the map $g\mapsto g^D$ ensures that $(G, \del^D, B)$ is canonically defined.

Finally, the curving of the $D$-connection may be computed using a  general property of the Courant bracket implicit in Theorem 2.3.6. of~\cite{MR998124}, namely that the restriction of $\IP{\cdot,\cdot}_-$ to any isotropic integrable subbundle $D_i\subset \TT M$ is closed with respect to the algebroid differential. 
\end{proof}

We now show that the above theorem may be applied in order to endow a gerbe with a holomorphic structure, in such a way that the resulting holomorphic gerbe inherits a holomorphic $0$-connection.  At the level of Courant algebroids, this becomes a reduction procedure as in~\cite{MR2323543}, whereby a real Courant algebroid ``reduces'' to a holomorphic one. 

Let $(G,\del)$ be a gerbe with $0$-connection, given by $\{L_{ij}, \del_{ij}\}$, over a complex manifold $X$, and let $E_\del$ be the associated complex Courant algebroid.  Note that $G$ is not assumed to have a holomorphic structure; our first goal will be to endow $G$ with such a structure.  We will do this by choosing an involutive isotropic subbundle $D\subset E_\del$ such that $\pi|_D : D\lra T_{0,1}$ is an isomorphism.  In other words, $D$ is a lifting of the anti-holomorphic tangent Lie algebroid $T_{0,1}$ to an involutive isotropic subbundle of $E_\del$. This choice induces a holomorphic structure on $G$, by Theorem~\ref{diracflat}. 
\begin{defn}\label{deflift}
Let $E$ be a complex Courant algebroid over a complex manifold $X$.  A \emph{lifting} of $T_{0,1}X$ to $E$ is a isotropic, involutive subbundle $D\subset E$ mapping isomorphically to $T_{0,1}X$ under $\pi:E\lra TX\otimes\CC$.
\end{defn}
The existence of a lifting for $T_{0,1}X$ as above will be controlled by an obstruction map which we now describe.  Consider the short exact sequence of vertical complexes:
\begin{equation*}
\xymatrix{
& &\Omega^{1}_X\ar[r]^-\del& \Omega^{2,\cl}_X\ar[r] &0 & \\
0\ar[r]&\CC\ar[r]&\OO_X\ar[u]^-\del& &
}
\end{equation*}
This gives the following excerpt from the long exact sequence:
\begin{equation}\label{realhol}
\xymatrix{H^1(\Omega^{2,\cl}_X)\ar[r]^\epsilon & H^3(X,\CC)\ar[r]^-\gamma & \HH^3(\OO_X\stackrel{\del}{\rightarrow}\Omega^1_X)}
\end{equation}
\begin{lemma}\label{obs}
Let $E$ be a complex Courant algebroid over the complex manifold $X$.  There exists a lifting of $T_{0,1}$ to $E$ if and only if $\gamma([E])=0$ in $\HH^3(\OO_X\stackrel{\del}{\rightarrow}\Omega^1_X)$. 
\end{lemma}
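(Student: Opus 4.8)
The plan is to make the existence of a lifting completely explicit in a local trivialization, reduce it to a single global $\delbar$-equation, and then recognize the obstruction to solving that equation as $\gamma([E])$. First I would choose a local trivialization $E|_{U_i}\cong\TT U_i$ (complexified) with transition maps $e^{B_{ij}}$, where $\{B_{ij}\}\subset\Omega^{2,\cl}(U_{ij})$ is a smooth closed cocycle representing $[E]\in H^3(X,\CC)$. Any subbundle of $\TT U_i$ projecting isomorphically to $T_{0,1}$ is a graph, and isotropy forces it to have the form $D_i=e^{\omega_i}(T_{0,1})$ with $\omega_i\in\Omega^{1,1}(U_i)\oplus\Omega^{0,2}(U_i)$ (the $(2,0)$-part annihilates $T_{0,1}$ and is irrelevant). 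Two conditions must then hold: the gluing $e^{B_{ij}}D_i=D_j$, which reads $\omega_j-\omega_i=(B_{ij})^{1,1}+(B_{ij})^{0,2}$ on $U_{ij}$; and involutivity, which by the behaviour of the Courant bracket under a $B$-transform is equivalent to $i_{\overline X}i_{\overline Y}\,d\omega_i=0$ for all $\overline X,\overline Y\in T_{0,1}$, i.e. to $(d\omega_i)^{1,2}=0$ and $(d\omega_i)^{0,3}=0$.

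Next I would globalize. Since $\Omega^2$ is fine, I trivialize $\{B_{ij}\}$ by smooth $2$-forms $\beta_i$ with $\beta_j-\beta_i=B_{ij}$, so that $H:=d\beta_i$ is a global closed $3$-form representing $[E]$. Setting $\omega_i=\beta_i^{1,1}+\beta_i^{0,2}+\mu$ solves the gluing condition for every global $\mu=\mu^{1,1}+\mu^{0,2}$, reducing the problem to involutivity. As $H^{1,2}=(d\beta_i)^{1,2}$ and $H^{0,3}=(d\beta_i)^{0,3}$ are globally defined, the two involutivity conditions collapse to the single global system
\begin{equation*}
\delbar\mu^{0,2}=-H^{0,3},\qquad \delbar\mu^{1,1}+\del\mu^{0,2}=-H^{1,2}.
\end{equation*}
Hence a lifting exists if and only if this system admits a global solution $(\mu^{1,1},\mu^{0,2})$.

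I would then identify the obstruction with $\gamma([E])$. From $dH=0$ one gets $\delbar H^{0,3}=0$ and $\del H^{0,3}+\delbar H^{1,2}=0$, so $(H^{0,3},H^{1,2})$ is a total-degree-$3$ cocycle in the Dolbeault double complex $(\Omega^{0,\bullet}\xrightarrow{\del}\Omega^{1,\bullet},\ \delbar)$ computing $\HH^\bullet(\OO_X\xrightarrow{\del}\Omega^1_X)$, and chasing the projection in the short exact sequence underlying \eqref{realhol} shows that its class is exactly $\gamma([E])$. The displayed system says precisely that this cocycle is a total coboundary, i.e. that $\gamma([E])=0$; equivalently, solving the first equation kills the primary obstruction $[H^{0,3}]\in H^3(X,\OO_X)$, after which the second produces a secondary obstruction in $H^2(X,\Omega^1_X)$, the two being assembled by the sequence $H^2(\Omega^1_X)\to\HH^3(\OO_X\to\Omega^1_X)\to H^3(\OO_X)$. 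This establishes both implications at once.

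The routine steps are the isotropy–involutivity bookkeeping and the two applications of the $\delbar$-lemma; the real work is the last identification, matching the concrete cocycle $(H^{0,3},H^{1,2})$ with the abstractly defined map $\gamma$. This requires the Dolbeault model of $\HH^\bullet(\OO_X\to\Omega^1_X)$ together with compatible \v{C}ech–Dolbeault representatives and careful attention to signs. The one subtlety I anticipate is that a de Rham representative $H$ of $[E]$ need not have $\del$-closed $(2,1)$-part, so the lift of $(H^{0,3},H^{1,2})$ to the resolution of $\CC$ must be chosen with care; this, however, does not affect the projected class and hence does not obstruct the argument.
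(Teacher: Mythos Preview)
Your proof is correct and follows essentially the same approach as the paper. The only difference is one of presentation: the paper immediately chooses a global isotropic splitting $s:TX\to E$ (which exists in the smooth category), obtaining the $H$-twisted bracket on $\TT X\otimes\CC$ and writing the lifting directly as the graph of a global $\theta\in\Omega^{1,1}\oplus\Omega^{0,2}$, so that involutivity reads $(d\theta-H)^{(1,2)+(0,3)}=0$; you instead start from \v{C}ech data $\{B_{ij}\}$, trivialize by $\beta_i$, and arrive at the same equation with $\mu$ playing the role of $\theta$. Your identification of the obstruction cocycle $(H^{0,3},H^{1,2})$ with $\gamma([E])$ via the Dolbeault model of $\HH^\bullet(\OO_X\stackrel{\del}{\to}\Omega^1_X)$ is exactly what the paper does in its final sentence, and your remark about the $(2,1)$-part of $H$ not being $\del$-closed is harmless, since that component does not enter the projected class.
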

\begin{proof}
Choose an isotropic splitting $s:TX\lra E$, which determines a $3$-form $H$ as in Equation~\eqref{hache}, so that $E$ is isomorphic as a Courant algebroid to $\TT X\otimes\CC$ equipped with the Courant bracket twisted by $H$ as in Equation~\eqref{twcour}; indeed $[E] = [H]\in H^3(M,\CC)$.  Then a general isotropic lifting of $T^{0,1}X$ is given by
\begin{equation*}
D=\{X + i_X\theta\ :\ X\in T^{0,1}X,\ \ \theta\in \Omega^{1,1}(X)\oplus \Omega^{0,2}(X)\},
\end{equation*} 
and $D$ is involutive if and only if $(d\theta - H)^{(1,2)+(0,3)}=0$, or in other words 
\begin{equation}\label{potent}
H^{1,2}+H^{0,3} = -\delbar \theta^{1,1} - d\theta^{0,2}.
\end{equation} 
Using the Dolbeault resolution of $\OO_X\stackrel{\del}{\rightarrow}\Omega^1_X$, we conclude that $\theta$ exists if and only if $\gamma([H])=0$.
\end{proof}
\begin{remark}\label{explclass}
A solution to equation~\eqref{potent} defines a cocycle $H^{3,0} + H^{2,1} + \del\theta^{1,1}\in \Z^1(\Omega^{2,\cl}_X)$ (using the resolution~\eqref{resolu}), since 
\begin{equation*}
d(H^{3,0} + H^{2,1} + \del\theta^{1,1}) = \delbar H^{2,1} -\del\delbar\theta^{1,1}=\delbar H^{2,1} + \del(H^{1,2} + \del \theta^{0,2}) = 0.
\end{equation*} 
Furthermore, we may change the isotropic splitting $s$ in the proof above by a global smooth $2$-form $B$, which sends $H\mapsto H-dB$ and modifies the lifting via $\theta\mapsto \theta + B^{1,1} + B^{0,2}$.  As a result, the cocycle condition~\eqref{potent} holds independently of the choices made.  In this way, a lifting of $T_{0,1}X$ to $E$ naturally induces a  lifting of $[E]\in H^3(M,\CC)$ to $H^1(\Omega^{2,\cl}_X)$ in the exact sequence~\eqref{realhol}.  
\end{remark}
\begin{remark}
The map $\epsilon$ in~\eqref{realhol} is lifted to a natural operation on Courant algebroids in~\cite{Grutzmann:2010sp}, where it is shown that any holomorphic Courant algebroid $\EE$ induces a smooth complex Courant algebroid structure on $\EE\oplus (T_{0,1}M\oplus T^*_{0,1}M)$, called the companion matched pair of $\EE$. 
\end{remark}

We see from~\eqref{realhol} that if $\gamma([H])=0$, then $[H]$ is in the image of a map from $H^1(\Omega^{2,\cl}_X)$, which as we know from Proposition~\ref{classhol} is the space classifying holomorphic Courant algebroids.  Indeed, Lemma~\ref{obs} and the above remark imply that a complex Courant algebroid with a lifting of $T_{0,1}X$ gives rise to a natural holomorphic Courant algebroid, as we now show.  
\begin{theorem}\label{courvers}
Let $X$ be a complex manifold. A lifting $D$ of $T_{0,1}X$ to a complex Courant algebroid $E$ gives rise to a natural holomorphic Courant algebroid $E_D$ on $X$. 
\end{theorem}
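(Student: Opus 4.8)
The plan is to realize $E_D$ concretely as the \emph{Courant reduction} of $E$ by the isotropic, involutive subbundle $D$: I set $E_D := D^\perp/D$, where $D^\perp$ is the orthogonal complement of $D$ under the symmetric pairing. Since $D$ is isotropic we have $D\subset D^\perp$, and a rank count (using that $D\cong T_{0,1}X$) gives that $E_D$ has exactly half the rank of $E$, as befits an extension of $T_{1,0}X$ by $T^*_{1,0}X$. Indeed, the anchor $\pi$ restricts to a surjection $D^\perp\to TX\otimes\CC$ carrying $D$ onto $T_{0,1}X$, so it descends to $\bar\pi:E_D\to T_{1,0}X$; its kernel is $D^\perp\cap(T^*X\otimes\CC)$ modulo $D$, which the explicit local model for $D$ from the proof of Lemma~\ref{obs} identifies with $T^*_{1,0}X$. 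This exhibits $E_D$ as a smooth extension $0\to T^*_{1,0}X\to E_D\to T_{1,0}X\to 0$. Finally, the symmetric pairing of $E$ restricts to $D^\perp$ and, because $D$ is isotropic, descends to a nondegenerate pairing on $E_D$.

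Next I would build the holomorphic structure on $E_D$ from a flat partial connection along $D$. Since $D\cong T_{0,1}X$ is the Dolbeault Lie algebroid, the adjoint action $\nabla^D_d\bar e := \text{(class of } [d,\tilde e] \text{ mod } D)$, for $d\in\Gamma(D)$ and $\tilde e$ a lift of $\bar e$ to $\Gamma(D^\perp)$, is a candidate $\delbar$-connection. I would verify that it is well defined on the quotient, because involutivity of $D$ gives $[\Gamma(D),\Gamma(D)]\subset\Gamma(D)$ while compatibility of the bracket with the pairing gives $[\Gamma(D),\Gamma(D^\perp)]\subset\Gamma(D^\perp)$; that it satisfies the Leibniz rule, since the Courant bracket does in its second slot; and that it is flat, since the Jacobi identity identifies the defect $[\nabla^D_{d_1},\nabla^D_{d_2}]-\nabla^D_{[d_1,d_2]}$ with zero. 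A flat $T_{0,1}X$-connection is precisely a $\delbar$-operator, so $\nabla^D$ endows $E_D$ with a holomorphic structure, and a short computation shows $\bar\pi$ and the inclusion $T^*_{1,0}X\hookrightarrow E_D$ intertwine the relevant $\delbar$-operators, making the extension holomorphic. Compatibility of $\nabla^D$ with the descended pairing (again from bracket--pairing compatibility) shows that pairing is holomorphic.

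The heart of the argument, and the step I expect to be the main obstacle, is producing the holomorphic Courant bracket, because $\Gamma(D^\perp)$ is \emph{not} closed under the Courant bracket of $E$: a direct computation in the local model shows the bracket of two sections of $D^\perp$ acquires a spurious $(0,1)$-cotangent component. The resolution is to bracket only along holomorphic sections. The $\nabla^D$-flat sections of $E_D$ are exactly the classes of the \emph{$D$-invariant} sections $e\in\Gamma(D^\perp)$, those with $[\Gamma(D),e]\subset\Gamma(D)$, and I would check, using bracket--pairing compatibility together with the relation~\eqref{skewfail} and the Jacobi identity, that for $D$-invariant $e,e'$ one has (i) $[e,e']\in\Gamma(D^\perp)$ and (ii) $[e,e']$ is again $D$-invariant. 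Hence the Courant bracket preserves this space and descends to a bracket on the sheaf of holomorphic sections of $E_D$; all the Courant algebroid axioms are then inherited verbatim from $E$, so $E_D$ is a holomorphic Courant algebroid. Naturality is immediate, since the construction $D\mapsto(D^\perp/D,\bar\pi,\bar q,[\cdot,\cdot])$ uses only $D$ and the given structure of $E$, with no auxiliary choices; comparison with the local description of Example~\ref{genex} and Remark~\ref{explclass} identifies the isomorphism class of $E_D$ in~\eqref{holcour} with the canonical lift of $[E]$.
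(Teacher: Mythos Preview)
Your argument is correct and matches the reduction description the paper gives \emph{immediately after} the formal proof of Theorem~\ref{courvers}, but the paper's actual proof proceeds differently. The paper chooses an isotropic splitting $s:TX\to E$, so that $E\cong(\TT X,[\cdot,\cdot]_{h_s})$ and the lifting $D$ is the graph of a form $\theta_s\in\Omega^{1,1}\oplus\Omega^{0,2}$; it then invokes Remark~\ref{explclass} to see that $h_s^{3,0}+h_s^{2,1}+\del\theta_s^{1,1}$ is a cocycle in the resolution~\eqref{resolu}, feeds this cocycle into the explicit construction of Example~\ref{genex} to obtain a holomorphic Courant algebroid structure on $\TT_{1,0}X$, and appeals to the equivariance already established in Remark~\ref{explclass} to show the result is independent of the splitting. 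So the paper's proof is extrinsic (choose a splitting, write down a Dolbeault cocycle, quote a prior example), while yours is intrinsic (take $D^\perp/D$ and transport all structure).

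What each buys: the paper's route is quick because the heavy lifting (the cocycle condition and its equivariance under change of splitting) was already done in Lemma~\ref{obs} and Remark~\ref{explclass}, and it makes the class $[E_D]\in H^1(\Omega^{2,\cl})$ explicit from the outset. Your route avoids auxiliary choices entirely and gives the $\delbar$-operator~\eqref{courdelbar} and the holomorphic bracket directly; the paper in fact presents exactly this construction right after the proof as the ``more direct'' way to obtain $E_D$, so you have essentially written out what the paper sketches there. One minor point: your claim that $\pi|_{D^\perp}$ surjects onto $TX\otimes\CC$ is fine, but it is cleaner (and what the paper does) to identify $\ker(\pi|_{D^\perp})=D^\perp\cap(T^*X\otimes\CC)=T^*_{1,0}X$ directly from the pairing, since $D\cap T^*X\otimes\CC=0$ makes the ``modulo $D$'' superfluous there.
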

\begin{proof}
As in the proof of Lemma~\ref{obs}, an isotropic splitting $s:TX\lra E$  gives rise to a 3-form $h_s$ and an isomorphism of Courant algebroids $s_*:(E,[\cdot,\cdot])\lra (\TT X, [\cdot,\cdot]_{h_s})$.  Using this splitting, the lifting $D\subset E$ of $T_{0,1}X$ is given by a 2-form $\theta_s\in\Omega^{1,1}(X)\oplus \Omega^{0,2}(X)$.  By the above remark, we also see that the 3-form $h_s^{3,0}+h_s^{2,1}+\del\theta_s^{1,1}$ is a cocycle and therefore defines a holomorphic Courant algebroid on $\TT_{1,0}X=T_{1,0}X\oplus T^*_{1,0}X$ via the construction in Example~\ref{genex}, taking the cocycle $(T,H)$ in that example to be $T_s=h_s^{2,1}+\del\theta_s^{1,1}$ and $H_s = h_s^{3,0}$. Finally, the equivariance described in the above remark proves that the induced holomorphic Courant algebroid structure on $E_D = \TT_{1,0} X$ varies functorially with the choices.
\end{proof}
To obtain the holomorphic Courant algebroid described above in a more direct way, we use the reduction procedure for Courant algebroids described in~\cite{MR2323543}. 
The lifting $D\subset E$ of $T_{0,1}X$ defines an ``extended action'' of $T_{0,1}X$ on $E$, and we perform a generalization of the symplectic quotient construction for the Courant algebroid $E$, as follows.

The reduction of $E$ by $D$ is given as an orthogonal bundle by $E_D = {D^\bot}/{D}$, where $D^\bot$ is the orthogonal complement of $D$ with respect to the symmetric pairing on $E$. Note that since $D$ is a lifting of $T_{0,1}X$, the kernel of $\pi|_{D^\bot}$ is $D^\bot\cap (T^*\otimes\CC) = T^*_{1,0}X$, and therefore $E_D=D^\bot/D$ is an extension of the form 
\begin{equation*}
\xymatrix{0\ar[r]& T^*_{1,0}X\ar[r]& E_D\ar[r] &T_{1,0}\ar[r] & 0}.
\end{equation*}
The holomorphic structure on $E_D$ is a natural consequence of the general fact that the bundle $D^\bot/D$ inherits a flat connection over the Lie algebroid $D$: given $s\in \Gamma^\infty(X,E_D)$, we define 
\begin{equation}\label{courdelbar}
\delbar_{X} s := [\tilde X, \tilde s]\mod D,
\end{equation}
where $X\in\Gamma^\infty(X,T_{0,1}X)$, $\tilde X$ is the unique lift of $X$ to a section of $D$, and $\tilde s$ is any lift of $s$ to a section of $D^\bot$. The Jacobi identity for the Courant bracket implies that it induces a Courant bracket on the holomorphic sections of $E_D$. In this way, we are able to describe the map 
\begin{equation*}
H^3(M,\CC)\ni[E]\stackrel{D}{\longmapsto} [E_D]\in H^1(\Omega^{2,\cl}(X)),
\end{equation*}
without choosing splittings.  Our final task in this section is to obtain the analogous result for $\CC^*$-gerbes.

\begin{theorem}\label{holconn}
Let $(G,\nabla)$ be a smooth $\CC^*$-gerbe with $0$-connection over a complex manifold $X$, and let $D\subset E_\nabla$ be a lifting of $T_{0,1}X$ to the complex Courant algebroid associated to $\nabla$.  Then $G$ inherits a holomorphic structure.  Furthermore, $G$ inherits a canonical holomorphic $0$-connection $\del$.
\end{theorem}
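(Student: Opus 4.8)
The statement has two parts---that the smooth gerbe $G$ acquires a holomorphic structure, and that it then carries a canonical holomorphic $0$-connection $\del$ over the holomorphic tangent Lie algebroid $T_{1,0}X$. I would treat them in turn, using the Courant-algebroid reduction $E_D=D^\bot/D$ of Theorem~\ref{courvers} as the conceptual guide: the gerbe-level assertion should be the prequantization shadow of that reduction, with $E_\del$ matching $E_D$ at the end.

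For the holomorphic structure the plan is to chain together the results already assembled. Applying Theorem~\ref{diracflat} to the involutive isotropic subbundle $D\subset E_\nabla$ equips $G$ with a canonical flat $D$-connection $\{\del^D_{ij},B_i\}$. Because $D$ is a lifting, the anchor $\pi|_D:D\lra T_{0,1}X$ is an isomorphism of Lie algebroids, so this is exactly a flat connection over the Dolbeault algebroid $A=T_{0,1}X$; concretely $\del^D_{ij}$ is the $(0,1)$-part $\nabla^{0,1}_{ij}$. Theorem~\ref{cangerb} then converts this flat $A$-connection into a holomorphic structure on $G$, canonical by the diagram of equivalences already carried out there.

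For the holomorphic $0$-connection I would work in the trivialization produced above, where the holomorphic structure on $L_{ij}$ is $\delbar^{\hol}_{ij}=\nabla^{0,1}_{ij}+A_i-A_j$ with $\delbar A_i=B_i$. The natural candidate $\nabla^{1,0}_{ij}$ is a holomorphic connection precisely when the $(1,1)$-curvature of the compatible smooth connection $\nabla_{ij}+A_i-A_j$ vanishes, and a short computation (using $e^{F_{\nabla_{ij}}}D_i=D_j$ for the local lifts $D_i=\{X+i_X\theta_i\}$) shows that curvature equals the coboundary $\nu_j-\nu_i$ with $\nu_i=\theta_i^{1,1}-\del A_i$. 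I would therefore correct the candidate to $\del_{ij}:=\nabla^{1,0}_{ij}+\mu_{ij}$, choosing $\mu_{ij}\in\Omega^{1,0}(U_{ij})$ with $\delbar\mu_{ij}=\nu_i-\nu_j$. The decisive point making this solvable is that $\delbar\nu_i$ equals a single global form ($-H^{1,2}$, via the involutivity relation~\eqref{potent} together with $B_i=\theta_i^{0,2}$), so that $\delbar(\nu_i-\nu_j)=0$ and $\mu_{ij}$ exists on contractible overlaps; by construction each $\del_{ij}$ is then a genuine holomorphic connection on the holomorphic line bundle $L_{ij}$.

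The main obstacle is the cocycle compatibility. For $(\del,B)$ to be a $0$-connection in the sense of Definition~\ref{gacon}, the glued isomorphisms $\theta_{ijk}$ must be flat, which forces the holomorphic $1$-forms $\lambda_{ijk}:=\mu_{ij}-\mu_{ik}+\mu_{jk}$---a priori a \v{C}ech $2$-cocycle representing the $0$-Atiyah class $\alpha_0\in H^2(\Omega^1_{\hol})$---to be cohomologically trivial, so that the $\mu_{ij}$ may be adjusted by holomorphic $1$-forms to render $\theta_{ijk}$ exactly flat. I expect the vanishing of $\alpha_0$ to be precisely what the existence of the lifting $D$, equivalently of the holomorphic Courant algebroid $E_D$, guarantees; the clean way to see this is to compare the construction with the reduction, verifying $E_\del\isom E_D$ by matching the gluing $2$-forms $F_{\del_{ij}}$ with those of $D^\bot/D$, which simultaneously pins $\del$ down intrinsically and certifies $\alpha_0=0$. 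Finally, canonicity of $\del$---its independence of the splitting, the potentials $A_i$, and the choices of $\mu_{ij}$, up to equivalence of holomorphic gerbes with connection---would be established by the same kind of coherent tetrahedral diagram of equivalences used in the proof of Theorem~\ref{cangerb}.
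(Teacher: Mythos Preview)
Your treatment of the first claim---chaining Theorem~\ref{diracflat} with Theorem~\ref{cangerb} to obtain the holomorphic structure---is correct and is exactly what the paper does.

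For the holomorphic $0$-connection, however, there is a genuine gap. You correct the candidate $\nabla^{1,0}_{ij}$ by $\mu_{ij}\in\Omega^{1,0}(U_{ij})$ solving $\delbar\mu_{ij}=\nu_i-\nu_j$, and then confront the cocycle $\lambda_{ijk}=\mu_{ij}+\mu_{jk}-\mu_{ik}$, proposing to kill it by appeal to the existence of $E_D$. That argument is circular: you cannot form $E_\del$ and compare it to $E_D$ until you already have a $0$-connection $\del$, which is precisely what the vanishing of $\alpha_0$ is needed for. Nothing in what you wrote actually shows $[\lambda_{ijk}]=0$.

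The fix is to choose the potentials one \v{C}ech level earlier. You yourself observe the decisive fact: in the \v{C}ech picture (local Courant structure untwisted on each $U_i$) the involutivity $(d\theta_i)^{(1,2)+(0,3)}=0$ gives $\delbar\nu_i=\delbar\theta_i^{1,1}+\del\theta_i^{0,2}=0$ on each $U_i$, not merely $\delbar(\nu_i-\nu_j)=0$. So on contractible $U_i$ one can solve $\delbar\mu_i=\nu_i$ and take $\mu_{ij}:=\mu_i-\mu_j$; then $\lambda_{ijk}\equiv 0$ identically and the flatness of $\theta_{ijk}$ is automatic. This is precisely the paper's move: it gauges by $\{(U_i\times\CC,\ d+\alpha_i)\}$ with $\alpha_i\in\Omega^{(1,0)+(0,1)}(U_i)$ chosen so that $(d\alpha_i)^{(1,1)+(0,2)}=\theta_i$; unwinding, $\alpha_i^{0,1}$ is your $A_i$ and $\alpha_i^{1,0}$ is the $\mu_i$ above. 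After this single gauge transformation the curvature of $\nabla_{ij}+\alpha_i-\alpha_j$ is purely of type $(2,0)$, so the $(0,1)$-part is a holomorphic structure and the $(1,0)$-part is a holomorphic connection for it---no obstruction ever appears. Canonicity then follows as you say, by the same tetrahedral argument as in Theorem~\ref{cangerb}.
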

\begin{proof}
By Theorem~\ref{diracflat}, the presence of $D\subset E_\nabla$ immediately endows $G$ with a flat $D$-connection. Since $D$ is isomorphic to $T_{0,1}X$, the gerbe $G$ is endowed with a holomorphic structure by Theorem~\ref{cangerb}. What remains is to show $G$ inherits a holomorphic $0$-connection.  

Choose a local trivialization in which the gerbe with $0$-connection is given by $\{L_{ij}, \theta_{ijk}, \nabla_{ij}\}$, the Courant algebroid $E_\nabla$ is given as in Theorem~\ref{corresp}, and $D_i=D|_{U_i}$ is given by the graph of $\theta_i\in\Omega^{(1,1)+(0,2)}(U_i)$, so that involutivity is the condition 
\begin{equation*}\label{invthet}
(d\theta_i)^{(1,2)+(0,3)}=0.
\end{equation*}
Since $F_{\nabla_{ij}}$ must glue $D_i$ to $D_j$, we have 
\begin{equation*}
(F_{\nabla_{ij}})^{(1,1)+(0,2)} = \theta_j-\theta_i.
\end{equation*}  
Refining the cover if necessary, choose $\alpha=\{\alpha_i\in\Omega^{(1,0)+(0,1)}(U_i)\}$ such that
\begin{equation*}
(d\alpha_i)^{(1,1)+(0,2)} = \theta_i.
\end{equation*}
Changing the local trivialization by the local line bundles with connection $(U_i\times\CC, d + \alpha_i)$, the $0$-connection has the expression $\nabla_{ij} +\alpha_i - \alpha_j$, which has curvature of type $(2,0)$. This defines a holomorphic gerbe with holomorphic $0$-connection $(G_\alpha,\del_\alpha)$, which a priori depends on $\alpha$. But two choices $\alpha,\alpha'$ of potential for $\{\theta_i\}$, as above, give rise naturally to the local holomorphic line bundles $L_i := (U_i\times\CC, \delbar + {\alpha'}_i^{0,1}-\alpha_i^{0,1})$, with holomorphic connections given by $\del_i:=\del + {\alpha'}_i^{1,0}-\alpha_i^{1,0}$.  The local holomorphic line bundles with holomorphic connections $(L_i,\del_i)$ then define an equivalence 
\begin{equation*}
(L_i,\del_i): (\GG_\alpha,\del_\alpha)\lra (\GG_{\alpha'},\del_{\alpha'}).
\end{equation*}  
The verification that the resulting holomorphic gerbe with $0$-connection is independent of the choices made is similar to that in the proof of Theorem~\ref{cangerb}.
\end{proof}
\begin{remark}
A $1$-connection on a gerbe with $0$-connection $\nabla$ is a maximal isotropic splitting of the Courant algebroid $E_\nabla$; for this reason we may view the lifting $D\subset E_\nabla$ of the Theorem as a partial $1$-connection on the gerbe. 
\end{remark}

\begin{example}
Consider the Hopf surface $X$ from Example~\ref{hopf}, viewed as an elliptic fibration over $\CC P^1$ via the map $(x_1,x_2)\mapsto[x_1:x_2]$. Choose affine charts $(U_0,z_0)$, $(U_1,z_1)$ for the base $\CC P^1$, and write $X$ as the gluing of $(z_0,w_0)\in U_0\times(\CC^*/\ZZ)$ to $(z_1,w_1)\in U_1\times(\CC^*/\ZZ)$  by the map
\begin{equation*}
(z_0,w_0)\longmapsto (1/z_0, z_0w_0).
\end{equation*}
On $U_0\cap U_1$ we have the following real $2$-form
\begin{equation*}
F_{01} = \tfrac{-1}{4\pi}\left(\frac{dz_0\wedge dw_0}{z_0w_0} + \frac{d\bar z_0\wedge d\bar w_0}{\bar z_0\bar w_0}\right).
\end{equation*}
The only nonvanishing period of this 2-form is for the cycle $S^1\times S^1\subset \CC^*\times\CC^*$, which yields $\tfrac{-1}{4\pi}2(2\pi i)(2\pi i)=2\pi$.  Since $F_{01}$ is integral, we may ``prequantize'' it, viewing it as the curvature of a Hermitian line bundle $(L_{01},h_{01},\nabla_{01})$ with unitary connection $\nabla_{01}$. This defines the structure of a Hermitian gerbe with $0$-connection $\nabla$ over $X$, such that the associated Courant algebroid $E_\nabla$ is precisely that from Example~\ref{hopf2}.

To describe a lifting of $T_{0,1}X$ to the Courant algebroid $E_\nabla$, it is convenient to choose a $1$-connection
\begin{equation*}
B_i = \tfrac{1}{4\pi}(\del K_i\wedge\del\log w_i + \delbar K_i\wedge\delbar\log \bar w_i),
\end{equation*}
where $K_i=\log(1+z_i\bar z_i)$ are the usual K\"ahler potentials for the Fubini-Study metric on $\CC P^1$. Computing the global real 3-form $H=dB_i$, we obtain the $(1,2)+(2,1)$-form
\begin{equation*}
H = \tfrac{-1}{8\pi}dd^cK_i\wedge d^c\log (w_i\bar w_i).
\end{equation*}
Observe that $H = d^c\omega$, for the $(1,1)$-form
\begin{equation}\label{omeg}
\omega = \tfrac{-i}{4\pi}(\delbar K_0\wedge \del\log w - \del K_0\wedge\delbar\log \bar w),
\end{equation}
with the significance that $H^{1,2} = \delbar(i\omega)$, which is precisely the condition~\eqref{potent} that $i\omega$ defines a lifting of $T_{0,1}X$.  As a consequence of choosing $\omega$, we obtain a canonical holomorphic structure on the gerbe, as follows. Returning to the \v{C}ech description, the lifting defined by $\omega$ is described by the local forms
\begin{equation*}
\theta_i = B_i^{0,2} - i\omega|_{U_i}.
\end{equation*}
Our open cover is such that $\theta_i^{0,2}$ is $\delbar$-exact, namely
\begin{equation*}
\theta_i^{0,2}=\delbar(\tfrac{1}{4\pi}K_i\wedge\delbar\log\bar w_i).
\end{equation*}
Following Theorem~\ref{holconn}, we perform a gauge transformation by $a=\{a_i = \tfrac{1}{4\pi} K_id\log(w_i\bar w_i)\}$; the new unitary connection $\nabla^a_{01}=\nabla_{01} + a_0-a_1$ has curvature of type $(1,1)$ given by
\begin{equation*}
F^a_{01} = \tfrac{1}{4\pi}\left(\frac{dz_0\wedge d\bar w_0}{z_0\bar w_0} + \frac{d\bar z_0\wedge d w_0}{\bar z_0w_0}\right),
\end{equation*}
so that $\nabla^a_{0,1}$ is indeed a holomorphic structure on the gerbe. After the gauge transformation, the lifting is described by 
\begin{equation*}
\theta^a_i = \theta_i -(da_i)^{(1,1)+(0,2)}= \tfrac{-1}{2\pi}\delbar K_i\wedge\del\log w_i.
\end{equation*}
While $\theta_i^a$ is $\delbar$-closed, it is not exact; therefore, to explicitly describe the holomorphic $0$-connection on the gerbe we would need to refine the cover.  Nevertheless, the associated holomorphic Courant algebroid may be easily constructed; by the prescription in Theorem~\ref{courvers}, it is given by the following holomorphic $(2,0)$-form:
\begin{align*}
\mathcal{B}_{01} &= (B_1^{2,0}+\del a_1^{1,0} - B_0^{2,0}-\del a_0^{1,0})\\
&=\tfrac{-1}{2\pi}(z_0w_0)^{-1}dz_0\wedge dw_0.
\end{align*}
In this way, we recover the holomorphic Courant algebroid studied in Example~\ref{hopf}.
\end{example}

\section{Generalized K\"ahler geometry}\label{Generalizedkahlergeometry}

In K\"ahler geometry, a complex structure $I$ is required to be compatible with a Riemannian metric $g$ in such a way that the 2-form $\omega= gI$ defines a symplectic structure.  The introduction of a Riemannian metric may be thought of as a reduction of structure for $TM$; the complex structure provides a $GL(n,\CC)$ structure which is then reduced by $g$ to the compact Lie group $U(n)$. 

A generalized complex structure on a real Courant algebroid $E$ reduces the usual orthogonal structure $O((n,n),\RR)$ of this bundle to the split unitary group $U(n,n)$.  Generalized K\"ahler geometry may be viewed as an integrable reduction of this structure to its maximal compact subgroup $U(n)\times U(n)$, by the choice of a compatible generalized metric.  

\subsection{Generalized complex and Dirac geometry}\label{gcdg}
Let $(E,\pi,q,[\cdot,\cdot])$ be an exact real Courant algebroid over the smooth manifold $M$. 
\begin{defn}
A generalized complex structure $\JJ$ is an orthogonal bundle endomorphism of $E$, such that $\JJ^2 = -1$, and whose $+i$ eigenbundle $L\subset E\otimes\CC$ is involutive.
\end{defn}
The endomorphism $\JJ$ may preserve the subbundle $T^*M\subset E$, as we saw in Example~\ref{gcbab}, in which case it induces a complex structure on the underlying manifold.  Note, however, that $\JJ$ is not required to preserve the structure of $E$ as an extension; indeed $\JJ(T^*M)$ may be disjoint from $T^*M$, in which case $\JJ(T^*M)$ provides a splitting of $\pi:E\lra TM$ with isotropic and involutive image, and therefore an isomorphism $E\isom\TT M$, with $\JJ$ necessarily of the form 
\begin{equation}\label{jsym}
\JJ_\omega=\begin{pmatrix}0&-\omega^{-1}\\\omega & 0\end{pmatrix},
\end{equation}
for $\omega:TM\lra T^*M$ a symplectic form.  In general, $\JJ(T^*M)$ is a maximal isotropic, involutive subbundle (a Dirac structure) whose intersection with $T^*M$ may vary over the manifold.  Indeed, $Q=\pi\circ\JJ|_{T^*M}:T^*M\lra TM$ is a real Poisson structure controlling the local behaviour of the geometry, in the sense that near a regular point of $Q$, $\JJ$ is isomorphic to the product of a complex and a symplectic structure~\cite{Gualtieri:qr}.  
\begin{example}A particularly illustrative example of a generalized complex structure is furnished by a holomorphic Poisson structure $\sigma$ on a complex manifold $(M,I)$.  This is given by a holomorphic bivector field $\sigma$ with vanishing Schouten bracket $[\sigma,\sigma]$.  Such a Poisson structure determines the following generalized complex structure on the standard Courant algebroid $E=\TT M$:
\begin{equation}\label{holpoi}
\JJ_{\sigma}:=\begin{pmatrix}
I & Q\\
0 & -I^*
\end{pmatrix},
\end{equation}
where $Q$ is the imaginary part of $\sigma = P  + iQ$.  The peculiar aspect of the generalized complex structure $\JJ_\sigma$ is that the complex structure obtained from its action on $TM\subset\TT M$ is not intrinsic. Indeed, a symmetry of the Courant algebroid, such as is given by a closed 2-form $B\in\Omega^{2,\cl}(M)$, conjugates~\eqref{holpoi} into 
\begin{equation*}
e^B\JJ_\sigma e^{-B}
=
\begin{pmatrix}
I-QB & Q\\
I^*B+BI-BQB & BQ-I^*
\end{pmatrix}.
\end{equation*}
In fact, in~\cite{Gualtieri:2007fe} it is shown that in some cases, $B$ may be chosen so that $I^*B+BI-BQB$ vanishes, rendering $e^B\JJ_\sigma e^{-B}$ again into the form~\eqref{holpoi}, but for a different complex structure $J=I-QB$. For example, this relation exists between the second Hirzebruch surface $\mathbb{F}_2$ and $\CC P^1\times\CC P^1$, which support holomorphic Poisson structures that are isomorphic as generalized complex structures. 
\end{example}
The $\pm i$ eigenbundles $L,\bar L\subset E\otimes\CC$ of a generalized complex structure define two Dirac structures which are transverse, in the sense $L\cap \bar L = \{0\}$.  In such a situation, as shown in~\cite{MR1472888}, the two Lie algebroids defined by $L,\bar L$ enjoy a compatibility condition, making them into a Lie bialgebroid.  Identifying $\bar L$  with $L^*$ using the symmetric pairing, this means that the Lie bracket on $\bar L$ may be extended to a Schouten bracket on the sheaf of graded algebras $\Omega^\bullet_L= \Gamma^\infty(\wedge^k L^*)$, and that the Lie algebroid differential $d_L$ on this algebra is a graded derivation of the bracket. In summary, we obtain a sheaf of differential graded Lie algebras from the transverse Dirac structures $(L,\bar L)$.
\begin{equation*}
(L,\bar L)\leadsto(\Omega^\bullet_L, d_L, [\cdot,\cdot])
\end{equation*} 
In~\cite{Gualtieri:qr}, it is shown that the above differential graded Lie algebra is elliptic, so that on a compact manifold, it has finite-dimensional cohomology groups $H^\bullet_{L}$. Furthermore, it controls the deformation problem for generalized complex structures, so that the vanishing of an obstruction map
\begin{equation*}
\Phi:H^2_L\lra H^3_L
\end{equation*}
ensures a smooth local moduli space modeled on $H^2_L$.   

A pair of transverse Dirac structures, as $(L,\bar L)$ above, enjoy a further transversality property, which we now describe.  We first recall the Baer sum operation on Courant algebroids~\cite{sevwein,MR2171584,Gualtieri:qr}, which is a realization of the additive structure on $H^1(\Omega^{2,\cl})$.
\begin{defn}\label{baercour}
Given two Courant algebroids $E_1,E_2$ on $M$ with projections $\pi_i:E_i\mapsto TM$, their Baer sum as extensions of $TM$ by $T^*M$, namely the bundle 
\begin{equation*}
E_1\boxtimes E_2 := (E_1\oplus_{TM} E_2)/K
\end{equation*}
for $K=\{(-\pi_1^*\xi,\pi_2^*\xi): \xi\in T^*\}$, carries a natural Courant algebroid structure, defined componentwise.
\end{defn}  
The standard Courant algebroid on $\TT M$ is the identity element for the Baer sum, and the inverse of a Courant algebroid $(E,\pi,q,[\cdot,\cdot])$, called the transpose $E^\top$, is given simply by reversing the symmetric inner product: $E^T = (E,\pi,-q,[\cdot,\cdot])$. The Baer sum operation may also be applied to Dirac structures, as explained in~\cite{ABM,Gualtieri:qr}: 
\begin{prop}
If $D_1\subset E_1$ and $D_2\subset E_2$ are Dirac structures which are transverse over $TM$, in the sense that $\pi_1(D_1) + \pi_2(D_2) = TM$, then their Baer sum 
\begin{equation*}
D_1\boxtimes D_2 := \frac{(D_1\oplus_{TM} D_2) + K}{K},
\end{equation*}
for $K$ as in Definition~\ref{baercour}, is a Dirac structure in $E_1\boxtimes E_2$.
\end{prop}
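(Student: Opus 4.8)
The plan is to verify the three defining properties of a Dirac structure for $D_1\boxtimes D_2\subset E_1\boxtimes E_2$ — that it is a smooth maximal isotropic subbundle and that it is involutive — and to isolate the single place where the transversality hypothesis does real work. Writing $n=\dim M$, the guiding observation is that the quotient map $q:E_1\oplus_{TM}E_2\lra E_1\boxtimes E_2$ restricts to an isomorphism of $D_1\oplus_{TM}D_2 := \{(d_1,d_2)\in D_1\times D_2 : \pi_1(d_1)=\pi_2(d_2)\}$ onto $D_1\boxtimes D_2$, so that each property can be checked on the fibred product, where the pairing and bracket are defined componentwise.

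First I would establish that $D_1\boxtimes D_2$ is a smooth subbundle of rank $n$. Consider the bundle map $\mu:D_1\oplus D_2\lra TM$, $(d_1,d_2)\mapsto\pi_1(d_1)-\pi_2(d_2)$, whose kernel is $D_1\oplus_{TM}D_2$. The hypothesis $\pi_1(D_1)+\pi_2(D_2)=TM$ says precisely that $\mu$ is surjective, hence of constant rank, so that $D_1\oplus_{TM}D_2$ is a subbundle of rank $n+n-n=n$. I would then show $(D_1\oplus_{TM}D_2)\cap K=0$: an element $(-\iota_1\xi,\iota_2\xi)$ of $K$ lies in $D_1\oplus_{TM}D_2$ iff $\iota_1\xi\in D_1$ and $\iota_2\xi\in D_2$, and using that a maximal isotropic $D_i$ satisfies $\{\xi:\iota_i\xi\in D_i\}=\Ann(\pi_i(D_i))$ — immediate from isotropy together with self-orthogonality $D_i=D_i^\perp$ — this forces $\xi\in\Ann(\pi_1(D_1))\cap\Ann(\pi_2(D_2))=\Ann(\pi_1(D_1)+\pi_2(D_2))=\Ann(TM)=0$, again by transversality. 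Since $K\subset(D_1\oplus_{TM}D_2)+K$ and the intersection is trivial, $q$ carries $D_1\oplus_{TM}D_2$ isomorphically onto $D_1\boxtimes D_2$, a rank-$n$ subbundle of $E_1\boxtimes E_2$.

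The remaining properties are inherited componentwise through this isomorphism. For isotropy, represent two sections of $D_1\boxtimes D_2$ by $(d_1,d_2),(d_1',d_2')\in\Gamma(D_1\oplus_{TM}D_2)$; the pairing on the Baer sum is assembled from the pairings of $E_1$ and $E_2$, and since $d_i,d_i'\in D_i$ with each $D_i$ isotropic, both contributions vanish. Having rank $n$, exactly half the rank of $E_1\boxtimes E_2$, the subbundle is therefore maximal isotropic. For involutivity I would use that the Courant bracket on the Baer sum is the descent of the componentwise bracket: the bracket of the two representatives is $([d_1,d_1']_1,[d_2,d_2']_2)$, which lies in $D_1\times D_2$ by involutivity of each $D_i$, and which still satisfies the fibred-product constraint because the anchors are bracket morphisms, giving $\pi_1[d_1,d_1']_1=[\pi_1 d_1,\pi_1 d_1']=[\pi_2 d_2,\pi_2 d_2']=\pi_2[d_2,d_2']_2$. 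Hence the componentwise bracket preserves $D_1\oplus_{TM}D_2$, and applying $q$ shows it preserves $D_1\boxtimes D_2$.

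The one genuinely load-bearing step — and the main obstacle to a naive argument — is the constant-rank claim: without transversality the dimension of $\pi_1(D_1)+\pi_2(D_2)$ can jump from point to point, so that $D_1\oplus_{TM}D_2$ fails to be a subbundle and $D_1\boxtimes D_2$ is not even well-defined as a smooth object. Transversality enters twice, once to furnish surjectivity of $\mu$ (smoothness and the correct rank) and once to force $(D_1\oplus_{TM}D_2)\cap K=0$ (so that passing to the quotient does not collapse the rank); isotropy and involutivity, by contrast, require nothing beyond the properties already carried by the $D_i$.
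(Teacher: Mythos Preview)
Your argument is correct. The paper does not actually prove this proposition; it is stated with a citation to~\cite{ABM,Gualtieri:qr} and then used. Your direct verification --- establishing constant rank via surjectivity of $\mu$, checking $(D_1\oplus_{TM}D_2)\cap K=0$ via $\Ann(\pi_1(D_1)+\pi_2(D_2))=0$, and then reading off isotropy, maximality, and involutivity componentwise through the isomorphism $q|_{D_1\oplus_{TM}D_2}$ --- is the standard one, and each step is sound. Your identification of where transversality is genuinely used (smoothness of the fibred product and triviality of the intersection with $K$) is also accurate; the isotropy and involutivity steps indeed require nothing further.
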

A pair of Dirac structures $D_1,D_2$ such that $D_1\cap D_2=\{0\}$ are transverse in the above sense, and we may form their Baer sum $D_1^\top\boxtimes D_2\subset \TT M$, where $D_1^\top$ is simply $D_1$ viewed as a Dirac structure in $E^\top$.  From observations made in~\cite{MR1262213}, it follows that this Baer sum is given by 
\begin{equation}\label{transdirac}
D_1^\top\boxtimes D_2 = \Gamma_\beta\subset \TT M,
\end{equation}
where $\Gamma_\beta$ is the graph of a Poisson structure $\beta$ (see~\cite{ABM} for a proof).  Recall that, as a Lie algebroid, $\Gamma_\beta\isom T^*M$ has bracket given by $[df,dg] = d(\beta(df,dg))$, and anchor map $\beta:T^*M\lra TM$.  The Poisson structure $\beta$ may also be described in the following way: let $P_{D_i}: E\lra D_i$ be the projection operators for the direct sum $E = D_1\oplus D_2$. Then $\beta$ is given by
\begin{equation}\label{poistr}
\beta = \pi\circ P_{D_1}|_{T^*M}:T^*M\lra TM.
\end{equation}
The geometry induced by such a pair of transverse Dirac structures $(D_1,D_2)$ in $E$ may be understood in the following way: by projection to $TM$, $D_1$ and $D_2$ each induce singular foliations $\FF_1,\FF_2$ on the manifold $M$.  The transversality condition on the $D_i$ implies that the induced foliations are transverse, in the sense $T\FF_1+T\FF_2 = TM$.   
As shown in~\cite{Gualtieri:qr}, the exact Courant algebroid $E$ may be pulled back to any submanifold $\iota:S\hookrightarrow M$, yielding an exact Courant algebroid over $S$, defined by
\begin{equation}\label{redsub}
E_S:= K^\perp/K,
\end{equation}
where $K= N^*S\subset E|_S$ is the conormal bundle of $S$. If $S$ happens to be a leaf of the singular foliation $\FF$ induced by a Dirac structure $D\subset E$, then the Dirac structure also pulls back, yielding an isotropic, involutive splitting $s_D$ of $\pi:E_S\lra TS$.  Therefore, in the presence of two transverse Dirac structures $(D_1,D_2)$, if we pull back $E$ to a leaf $S$ of the singular foliation induced by $D_1^\top\boxtimes D_2$, it will have two splittings $s_{D_1}, s_{D_2}$, each obtained from one of the Dirac structures.  The resulting splittings are themselves transverse in $E_S$, and therefore they differ by a section $\omega_S\in\Omega^{2,\cl}(S)$ which must be nondegenerate.  This is precisely the symplectic form on the leaf of the Poisson structure $\beta$. 

Finally, we wish to emphasize an algebraic implication of the Baer sum identity described above.  
\begin{prop}\label{baerfiber}
Let $D_1,D_2\subset E$ be Dirac structures such that $D_1\cap D_2 = \{0\}$.  Then the Baer sum $D_1^\top\boxtimes D_2 = \Gamma_\beta$, coincides with the fiber product of the Lie algebroids $D_1,D_2$ over $TM$.  As a result, we have the isomorphism of sheaves of differential graded algebras
\begin{equation}\label{tensdir}
(\wedge^\bullet \T_M, d_\beta) = (\Omega^\bullet_{D_1}, d_{D_1})\otimes_{\Omega^\bullet_T} (\Omega^\bullet_{D_2}, d_{D_2}), 
\end{equation}
where $(\wedge^\bullet \T_M, d_\beta = [\beta,\cdot])$ is the Lichnerowicz complex\footnote{The hypercohomology of this complex is the well-known Poisson cohomology of $\beta$.} of sheaves of multivector fields induced by the Schouten bracket with the Poisson structure $\beta$, and the anchor maps $\pi_i:D_i\lra TM$ induce the morphisms $\pi_i^*:\Omega^\bullet_T\lra \Omega^\bullet_{D_i}$ from the usual de Rham complex of $M$, which are used in the tensor product.
\end{prop}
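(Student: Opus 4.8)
The plan is to prove the two assertions in turn, obtaining the algebra isomorphism~\eqref{tensdir} as a formal consequence of the Lie-algebroid identification together with a general fact about fiber products. I would begin by extracting the two structural consequences of $D_1\cap D_2=\{0\}$. Since $D_1$ and $D_2$ are maximal isotropic, hence each of rank $\dim M$, and meet trivially, we have $E=D_1\oplus D_2$ as a vector bundle; applying the surjective anchor $\pi$ then gives the transversality $\pi(D_1)+\pi(D_2)=TM$. Consequently the fiber product
\begin{equation*}
D_1\oplus_{TM}D_2 = \{(d_1,d_2)\in D_1\oplus D_2 : \pi(d_1)=\pi(d_2)\}
\end{equation*}
is the kernel of the surjection $(d_1,d_2)\mapsto\pi(d_1)-\pi(d_2)$, hence a subbundle of rank $\dim M$, and it carries the componentwise bracket. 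This is a well-defined Lie algebroid structure precisely because the $\pi_i$ are bracket-preserving: the $TM$-components of $[s_1,t_1]_{D_1}$ and $[s_2,t_2]_{D_2}$ agree for any pair of sections of the fiber product. Its anchor is the common projection $\pi$.

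To prove the first assertion I would compare this fiber product directly with the Baer sum $D_1^\top\boxtimes D_2=\frac{(D_1\oplus_{TM}D_2)+K}{K}$, with $K$ as in Definition~\ref{baercour}. A one-line check gives $K\cap(D_1\oplus_{TM}D_2)=\{0\}$: an element $(-\iota\xi,\iota\xi)\in K$ lying in the fiber product forces $\iota\xi\in D_1\cap D_2=\{0\}$, whence $\xi=0$. Therefore the restriction of the quotient map to $D_1\oplus_{TM}D_2$ is injective, and since it is also surjective onto the quotient it gives an isomorphism $D_1\oplus_{TM}D_2\cong D_1^\top\boxtimes D_2=\Gamma_\beta$. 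Under this map a fiber-product element is represented in $\TT M$ by the same pair, so the projection $\TT M\to TM$ restricts to the common anchor $\pi$, making the identification anchor-preserving; concretely $\xi\in T^*M\cong\Gamma_\beta$ corresponds to $(P_{D_1}\iota\xi,-P_{D_2}\iota\xi)$, whose anchor is $\pi(P_{D_1}\iota\xi)=\beta(\xi)$ by~\eqref{poistr}. Finally, since the Courant structure on a Baer sum is defined componentwise (Definition~\ref{baercour}), the induced Dirac bracket on $\Gamma_\beta$ is exactly the componentwise bracket on representatives, i.e. the fiber-product bracket; this upgrades the bundle isomorphism to an isomorphism of Lie algebroids.

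For the second assertion I would invoke the principle that the de Rham functor $A\mapsto(\Omega^\bullet_A,d_A)$ sends a transverse fiber product of Lie algebroids over $TM$ to the relative tensor product of their complexes over $(\Omega^\bullet_T,d)$. Dualizing the exact sequence $0\to D_1\oplus_{TM}D_2\to D_1\oplus D_2\xrightarrow{\pi_1-\pi_2}TM\to 0$ yields $0\to T^*M\to D_1^*\oplus D_2^*\to(D_1\oplus_{TM}D_2)^*\to 0$ with first map $\xi\mapsto(\pi_1^*\xi,-\pi_2^*\xi)$. Passing to exterior algebras identifies $\Omega^\bullet_{D_1\oplus_{TM}D_2}=\wedge^\bullet(D_1\oplus_{TM}D_2)^*$ with the quotient of $\Omega^\bullet_{D_1}\otimes\Omega^\bullet_{D_2}=\wedge^\bullet(D_1^*\oplus D_2^*)$ by the ideal generated by $\pi_1^*\xi\otimes 1-1\otimes\pi_2^*\xi$, which is by definition $\Omega^\bullet_{D_1}\otimes_{\Omega^\bullet_T}\Omega^\bullet_{D_2}$. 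The total differential $d_{D_1}\otimes 1\pm 1\otimes d_{D_2}$ descends to this quotient because each $\pi_i^*$ is a chain map ($d_{D_i}\circ\pi_i^*=\pi_i^*\circ d$), and it coincides with the Chevalley--Eilenberg differential of the componentwise bracket. Transporting along the first assertion, the fiber-product Lie algebroid is $\Gamma_\beta\cong T^*M$ with the Poisson bracket of $\beta$, whose de Rham complex on $\wedge^\bullet(T^*M)^*=\wedge^\bullet\T_M$ is exactly the Lichnerowicz complex $(\wedge^\bullet\T_M,d_\beta)$; this gives~\eqref{tensdir}.

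The step I expect to be the main obstacle is the compatibility of differentials in the second assertion: one must verify that the total differential on the relative tensor product genuinely reproduces the Lie algebroid differential of the fiber product, and not merely that the two underlying graded-commutative algebras coincide, while keeping the Koszul signs consistent throughout. By contrast, the bundle-level identifications and the bracket computation underlying the first assertion are routine once $E=D_1\oplus D_2$ and $K\cap(D_1\oplus_{TM}D_2)=\{0\}$ are established.
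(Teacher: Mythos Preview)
Your proof is correct and follows essentially the same approach as the paper: the paper's entire argument is the observation that $K\cap(D_1\oplus_{TM}D_2)=D_1\cap D_2\cap T^*M=\{0\}$, from which the Lie-algebroid identification follows, and then the tensor product statement is dispatched with the single phrase ``which dualizes to.'' You supply considerably more detail than the paper does, particularly on the dga side (the dual exact sequence, the ideal description of the relative tensor product, and the descent of the differential), and you correctly flag the compatibility of differentials as the only nontrivial verification---a point the paper leaves entirely to the reader.
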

\begin{proof}
This follows from the simple observation that $K=\{(-\pi_1^*\xi,\pi_2^*\xi)\ :\ \xi\in T^*M\}$ intersects $D_1\oplus_{TM} D_2= \{(d_1,d_2)\in D_1\oplus D_2\ :\ \pi_1(d_1)=\pi_2(d_2)\}$ precisely in $D_1\cap D_2\cap T^*M$, which vanishes since $D_1\cap D_2 = \{0\}$.  In this way, $D_1^{\top}\boxtimes D_2$ coincides, as a Lie algebroid, with the fiber product of $D_1$,$D_2$ as Lie algebroids, yielding the diagram of Lie algebroids
\begin{equation*}
\xymatrix{\Gamma_\beta\ar[r]\ar[d]& D_2\ar[d]\\D_1\ar[r] & TM}
\end{equation*}
which dualizes to the fact that the Lichnerowicz complex is given by the (graded) tensor product~\eqref{tensdir}. 
\end{proof}
This is of particular importance when studying modules over the Lie algebroids $D_1$ or $D_2$, i.e. vector bundles (or sheaves of $\OO_M$-modules) with flat $D_i$-connections as in Definition~\ref{Acon}.
\begin{corollary}\label{transverpoi}
For a pair of transverse Dirac structures $(D_1,D_2)$, the tensor product of a $D_1$-module and a $D_2$-module is a $\Gamma_\beta$-module, i.e. a Poisson module~\cite{MR1423640, MR1465521}. In particular, any $D_i$-module is also a Poisson module.
\end{corollary}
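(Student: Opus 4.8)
The plan is to exploit the fiber-product description of $\Gamma_\beta$ furnished by Proposition~\ref{baerfiber}. Since $\Gamma_\beta = D_1^\top\boxtimes D_2$ is the fiber product of $D_1$ and $D_2$ over $TM$ in the category of Lie algebroids, it comes equipped with two projections $p_i:\Gamma_\beta\lra D_i$, namely the restrictions of the coordinate maps $D_1\oplus D_2\lra D_i$. These are genuine Lie algebroid morphisms over $\mathrm{id}_M$: they are bundle maps, they intertwine the anchors (both compositions $\pi_i\circ p_i$ equal the anchor $\beta$ of $\Gamma_\beta$, as recorded by the commuting square displayed in the proof of Proposition~\ref{baerfiber}), and they preserve brackets because the bracket on the fiber product is computed componentwise. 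The whole corollary will then follow from two formal operations on Lie algebroid modules: pullback along a morphism, and tensor product over $\OO_M$.

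First I would recall that a morphism $\phi:A\lra B$ of Lie algebroids over the identity of $M$ allows one to pull back any $B$-module to an $A$-module. Concretely, if $(V,\del^B)$ is a $B$-module in the sense of Definition~\ref{Acon}, one defines $\phi^*V$ to be the same sheaf $V$ equipped with the $A$-connection $\del^A_a:=\del^B_{\phi(a)}$. The Leibniz rule holds because $\phi$ intertwines the anchors, so that $d_Af=\phi^*(d_Bf)$, and flatness is preserved because $\phi$ preserves brackets; hence $\phi^*V$ is again a genuine $A$-module. Applying this to $p_1$ and $p_2$, a $D_1$-module $V_1$ and a $D_2$-module $V_2$ yield $\Gamma_\beta$-modules $p_1^*V_1$ and $p_2^*V_2$, each of which is, as a sheaf of $\OO_M$-modules, just $V_1$ (respectively $V_2$).

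Next I would form the tensor product. For any Lie algebroid $A$, the tensor product over $\OO_M$ of two $A$-modules $(W_1,\del^1)$ and $(W_2,\del^2)$ is again an $A$-module under the connection $\del_a(w_1\otimes w_2)=(\del^1_aw_1)\otimes w_2 + w_1\otimes(\del^2_aw_2)$, whose flatness is immediate from the flatness and the derivation property of each factor. Taking $A=\Gamma_\beta$ and tensoring $p_1^*V_1$ with $p_2^*V_2$ therefore produces a $\Gamma_\beta$-module whose underlying sheaf is $V_1\otimes_{\OO_M}V_2$; since a $\Gamma_\beta$-module is precisely a Poisson module by the definition of $\beta$, this is the desired conclusion. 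The ``in particular'' statement is the special case in which one factor is the trivial module $(\OO_M,d_{D_2})$, or equivalently the bare pullback $p_i^*V_i$ of a single $D_i$-module.

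The argument is essentially formal once Proposition~\ref{baerfiber} is in hand, so there is no serious analytic obstacle; the only point requiring genuine care is the very first one, namely confirming that the coordinate projections off the fiber product really are Lie algebroid morphisms over $\mathrm{id}_M$ whose anchors both coincide with $\beta$. This is exactly the information encoded in the commuting square of Lie algebroids in Proposition~\ref{baerfiber}, which dualizes to the tensor decomposition~\eqref{tensdir}. One could equally phrase the entire proof in that dual, DGA language, identifying a $D_i$-module with a DG-module over $(\Omega^\bullet_{D_i},d_{D_i})$ and invoking the isomorphism~\eqref{tensdir} to tensor the two DG-modules over $(\Omega^\bullet_T,d)$ into a DG-module over the Lichnerowicz complex $(\wedge^\bullet\T_M,d_\beta)$.
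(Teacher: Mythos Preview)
Your proposal is correct and is exactly the argument the paper has in mind: the corollary is stated without proof precisely because it follows formally from Proposition~\ref{baerfiber} by pulling back along the projections $p_i:\Gamma_\beta\to D_i$ and tensoring, just as you describe. The dual DGA phrasing you mention at the end is also the one suggested by~\eqref{tensdir}.
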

In the case of a generalized complex structure, we have the transverse Dirac structures $(L,\overline{L})$, and it was shown in~\cite{Gualtieri:qr} that their Baer sum is
\begin{equation}\label{relpoi}
L^\top\boxtimes \overline{L} = \Gamma_{iQ/2}\subset \TT M,
\end{equation}
for the Poisson structure $Q=\pi\circ\JJ|_{T^*M}$ described earlier.  A vector bundle with a flat $\overline{L}$-connection on a generalized complex manifold is called a generalized holomorphic bundle~\cite{Gualtieri:qr}, hence we have the following consequence of the above Baer sum:
\begin{corollary}
Any generalized holomorphic bundle inherits a Poisson module structure, for the underlying real Poisson structure. 
\end{corollary}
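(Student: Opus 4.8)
The plan is to derive the statement directly from Corollary~\ref{transverpoi}, applied to the canonical transverse pair $(L,\overline L)$ of Dirac structures, together with the explicit Baer sum identity~\eqref{relpoi}. First I would recall that, by definition, a generalized holomorphic bundle is a vector bundle equipped with a flat $\overline L$-connection, which is exactly an $\overline L$-module in the sense of Definition~\ref{Acon}. Since $L$ and $\overline L$ are transverse Dirac structures with $L\cap\overline L=\{0\}$, the hypotheses of Corollary~\ref{transverpoi} are met with $D_1=L$ and $D_2=\overline L$; its final clause then asserts that every $\overline L$-module is a module over the Lie algebroid $\Gamma_\beta = L^\top\boxtimes\overline L$, where the latter module structure is furnished by tensoring with the trivial $L$-module $\OO_M$.

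Next I would invoke the Baer sum computation~\eqref{relpoi}, which identifies $L^\top\boxtimes\overline L$ with $\Gamma_{iQ/2}$ for the Poisson structure $Q=\pi\circ\JJ|_{T^*M}$. Thus a generalized holomorphic bundle inherits a flat connection over $\Gamma_{iQ/2}\cong T^*M\otimes\CC$, i.e.\ a Poisson module structure for the bivector $iQ/2$. It remains only to pass to the underlying real Poisson structure $Q$: because $iQ/2$ differs from $Q$ by the nonzero scalar $i/2$, both the anchor and the Lichnerowicz bracket of $\Gamma_{iQ/2}$ are the corresponding data of $\Gamma_Q$ rescaled uniformly, so a flat $\Gamma_{iQ/2}$-connection $\nabla$ is precisely the data of the flat $\Gamma_Q$-connection $\tfrac{i}{2}\nabla$ on the same complexified bundle. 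Hence the bundle is a Poisson module for $Q$.

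I expect no genuine difficulty here, as the entire content is carried by the previously established Corollary~\ref{transverpoi} and the identity~\eqref{relpoi}. The only step demanding a moment's attention is the scalar discrepancy between the bivector $iQ/2$ produced by the Baer sum and the real Poisson structure $Q$ named in the statement; this is dispatched by the elementary observation that rescaling a Poisson structure by a nonzero constant yields a canonically isomorphic Lie algebroid, and therefore an equivalent category of modules.
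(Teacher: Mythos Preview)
Your proposal is correct and follows exactly the approach the paper intends: the corollary is stated without proof, as an immediate consequence of Corollary~\ref{transverpoi} applied to the transverse pair $(L,\overline L)$ together with the Baer sum identity~\eqref{relpoi}. Your additional remark on the scalar discrepancy between $iQ/2$ and $Q$ is a reasonable clarification of a point the paper leaves tacit; the precise rescaling factor you wrote is off (to pass from a flat $\Gamma_{iQ/2}$-connection to a flat $\Gamma_Q$-connection one multiplies by $-2i$, not $i/2$), but this is immaterial to the conclusion, which only requires that the two Lie algebroids have canonically equivalent module categories.
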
 

\subsection{Generalized K\"ahler structures}

Let $(E,\pi,q,[\cdot,\cdot])$ be an exact real Courant algebroid over the smooth manifold $M$. 
\begin{defn}
A generalized K\"ahler structure on $E$ is a pair $(\JJ_+,\JJ_-)$ of generalized complex structures on $E$ which commute, i.e. $\JJ_+\JJ_- = \JJ_-\JJ_+$, and such that the symmetric pairing 
\begin{equation}\label{gkah}
G(x,y) := \IP{\JJ_+ x, \JJ_- y}
\end{equation}
is positive-definite, defining a metric on $E$ called the generalized K\"ahler metric.
\end{defn}
A usual K\"ahler structure on a manifold is given by a complex structure $I$ compatible with a Riemannian metric $g$, in the sense that $\omega:=gI$ is a symplectic form.  This defines the following generalized K\"ahler structure on $\TT M = TM\oplus T^*M$:
\begin{equation}\label{usukah}
\JJ_+ = \begin{pmatrix}0 & \omega^{-1}\\\omega& 0\end{pmatrix},\ \ \ \ 
\JJ_- = \begin{pmatrix}I & 0\\0&-I^*\end{pmatrix},
\end{equation}
so that $G(X+\xi,Y+\eta) = \tfrac{1}{2}(g(X,Y) + g^{-1}(\xi,\eta))$ is the usual K\"ahler metric.
The generalized K\"ahler metric~\eqref{gkah} is an example of a generalized metric, which may be viewed as a reduction of structure for the Courant algebroid $E$, from its usual $O(n,n)$ structure to the compact form $O(n)\times O(n)$. 
\begin{defn}
A generalized metric $G$ on $E$ is a positive-definite metric on $E$ which is compatible with the pre-existing symmetric pairing $\IP{\cdot,\cdot}$, in the sense that it is obtained by choosing a maximal positive-definite subbundle $C_+\subset E$ (with orthogonal complement $C_-:=C_+^\bot$), and defining 
\begin{equation*}
G(x,y):=\IP{x_+,y_+} - \IP{x_-,y_-},
\end{equation*} 
where $x_\pm$ denotes the orthogonal projection to $C_\pm$.
\end{defn}
Identifying $E$ with $E^*$ using $\IP{\cdot,\cdot}$, we may view $G$ as a self-adjoint endomorphism $G:E\lra E$, with $\pm 1 $ eigenbundle given by $C_\pm$.  For a generalized K\"ahler structure, $G= -\JJ_+\JJ_-$, so that for~\eqref{usukah}, $G$ is given by 
\begin{equation*}
G =\begin{pmatrix}0 & g^{-1}\\g & 0\end{pmatrix}.
\end{equation*}
\begin{example}
Any positive-definite subbundle $C_+\subset\TT M$ is the graph of a bundle map $\theta:TM\lra T^*M$ with positive-definite symmetric part. That is, $\theta = b + g$, with $b\in\Omega^2(M)$ and $g$ a Riemannian metric.  Then $C_\pm$ is the graph of $b\pm g$.   The corresponding endomorphism $G:E\lra E$ is then described by 
\begin{equation*}
G_{g,b} =e^b\begin{pmatrix}0 & g^{-1}\\g & 0\end{pmatrix}e^{-b}=\begin{pmatrix}-g^{-1} b&g^{-1}\\g - bg^{-1} b
&bg^{-1}\end{pmatrix}.
\end{equation*}
Note that the induced Riemannian metric on $TM\subset \TT M$ is $g - bg^{-1}b$, while the metric on $T^*M$ is the usual inverse metric of $g$.
\end{example}
An immediate consequence of the choice of generalized metric on $E$ is that the projection $\pi:E\lra TM$ obtains two splittings $s_\pm$ corresponding to the two subbundles $C_\pm$, simply because the definite bundles $C_\pm$ intersect the isotropic subbundle $T^*M\subset E$ trivially.  The average of these splittings, $s = \tfrac{1}{2}(s_+ + s_-)$, is then a splitting of $\pi$ with isotropic image $G(T^*M)$.  The splitting induces an isomorphism $s_*: E\lra \TT M$ which sends the definite subbundles $C_\pm$ to the graphs $\Gamma_{\pm g}$, for $g$ a Riemannian metric on $M$.  In summary, we have the following:
\begin{prop}
The choice of a generalized metric $G$ on $E$ is equivalent to a choice of isotropic splitting $s:TM\lra E$, together with a Riemannian metric $g$ on $M$, such that 
\begin{equation}\label{splits}
C_\pm = \{s(X) \pm g(X)\ :\ X\in TM\}.
\end{equation} 
\end{prop}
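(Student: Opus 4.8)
The plan is to exhibit a two-sided correspondence: from a pair $(s,g)$ produce a generalized metric via~\eqref{splits}, from a generalized metric $G$ (equivalently, a maximal positive-definite $C_+$ with $C_-=C_+^\perp$) recover a pair $(s,g)$, and check that these operations are mutually inverse. Throughout I would use only the structure of the exact Courant algebroid: that $\iota(T^*M)$ is isotropic, that $C_+$ has rank $n=\dim M$ with $\IP{\cdot,\cdot}$ of signature $(n,n)$, and the canonical pairing $\IP{s(X),\iota(\eta)}=\tfrac12\eta(X)$ valid for any splitting $s$.

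For the direction $(s,g)\mapsto G$, I would define $C_\pm$ by~\eqref{splits} and compute directly. Using isotropy of $s$ and of $T^*M$, one finds $\IP{s(X)\pm g(X),s(X)\pm g(X)}=\pm g(X)(X)$, so $C_+$ is positive-definite and $C_-$ negative-definite, each of rank $n$; and $\IP{s(X)+g(X),s(Y)-g(Y)}=\tfrac12(g(X)(Y)-g(Y)(X))=0$ by symmetry of $g$, so $C_-=C_+^\perp$. This is exactly the data of a generalized metric.

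For the reverse direction, I would first observe that since $C_\pm$ are definite while $\iota(T^*M)$ is isotropic, $C_\pm\cap\iota(T^*M)=\{0\}$; as $\dim C_\pm=n=\rk TM$ and $\ker\pi=\iota(T^*M)$, the restriction $\pi|_{C_\pm}\colon C_\pm\lra TM$ is an isomorphism, yielding splittings $s_\pm:=(\pi|_{C_\pm})^{-1}$ with image $C_\pm$. Setting $s:=\tfrac12(s_++s_-)$ and $g:=\tfrac12(s_+-s_-)$, the differences $s_\pm(X)-s(X)=\pm g(X)$ lie in $\iota(T^*M)$, so $g$ is a bundle map $TM\lra T^*M$ and $s_\pm=s\pm g$ recovers the form~\eqref{splits}.

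The crux is to show $s$ is isotropic and $g$ is a Riemannian metric. Expanding the orthogonality $\IP{s_+(X),s_-(Y)}=0$ with $s_\pm=s\pm g$ and the canonical pairing gives $\IP{s(X),s(Y)}+\tfrac12(g(X)(Y)-g(Y)(X))=0$; since the first term is symmetric and the second antisymmetric in $(X,Y)$, taking the symmetric and antisymmetric parts separately forces $\IP{s(X),s(Y)}=0$, so $s$ is isotropic, and $g(X)(Y)=g(Y)(X)$, so $g$ is symmetric. Positive-definiteness of $g$ then follows from positivity of $C_+$, since $\IP{s_+(X),s_+(X)}=g(X)(X)$. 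Finally, reconstructing $C_\pm$ from $(s,g)$ by~\eqref{splits} returns $\{s_\pm(X)\}=C_\pm$, and conversely the averaging and differencing recover the original $(s,g)$, establishing that the two constructions are inverse. I expect the isotropy of the averaged splitting $s$ to be the only genuinely nontrivial point; the rest is bookkeeping with the canonical pairing.
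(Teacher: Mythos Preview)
Your proposal is correct and follows essentially the same route as the paper: obtain splittings $s_\pm$ from $C_\pm$ (using that definite subbundles meet the isotropic $T^*M$ trivially), average to define $s$, and take the difference to define $g$. The one point of divergence is the verification that $s$ is isotropic: the paper observes directly that the image of $s$ is $G(T^*M)$, which is isotropic because $G$ is an isometry of $\IP{\cdot,\cdot}$ and $T^*M$ is isotropic; you instead expand $\IP{s_+(X),s_-(Y)}=0$ and separate symmetric and antisymmetric parts to force both $\IP{s(X),s(Y)}=0$ and the symmetry of $g$. Both arguments are valid and of comparable length; the paper's identification $s(TM)=G(T^*M)$ is perhaps worth noting since it gives an intrinsic description of the image of the splitting.
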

As a result of the splitting $s$ determined by the generalized metric, we immediately obtain a closed 3-form $H\in\Omega^{3,\cl}(M,\RR)$, as in~\eqref{hache}, which twists the Courant bracket on $\TT M$ as in~\eqref{twcour}, so that $s$ induces an isomorphism of Courant algebroids $s_*:E\lra (\TT M, [\cdot,\cdot]_H)$.
\begin{defn}
The \emph{torsion} of a generalized metric $G$ on the Courant algebroid $E$ is the 3-form $H$ corresponding to the splitting of $E$ defined by $G(T^*M)$.
\end{defn}
\begin{corollary}
If $E$ is the Courant algebroid obtained from a Hermitian gerbe with unitary connection, a choice of generalized metric induces a 1-connection with curvature given by the torsion $H$ of the generalized metric.
\end{corollary}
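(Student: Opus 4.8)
The plan is to combine the bijection between isotropic splittings and $1$-connections established in Theorem~\ref{corresp} (in its unitary, real form from Corollary~\ref{unitgerb}) with the observation that the torsion of a generalized metric is, by definition, the characteristic $3$-form \eqref{hache} of the splitting $G(T^*M)$. First I would recall that, by the preceding proposition characterizing generalized metrics, the metric $G$ is equivalent to the data of an isotropic splitting $s:TM\lra E$ with image $G(T^*M)$, together with a Riemannian metric $g$; the torsion is by definition the closed $3$-form $H(X,Y,Z)=\IP{s(X),[s(Y),s(Z)]}$ of \eqref{hache} attached to this splitting. Since here $E=E_\nabla$ arises from the Hermitian gerbe with unitary $0$-connection $\nabla$, Corollary~\ref{unitgerb} asserts that global isotropic splittings of $\pi:E_\nabla\lra TM$ correspond bijectively to $1$-connections on $(G,\nabla)$. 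Applying this bijection to $s=G(T^*M)$ produces a $1$-connection $B=\{B_i\}$ which, by the explicit description in the proof of Theorem~\ref{corresp}, satisfies $s|_{U_i}(X)=X+i_XB_i$ and $B_j-B_i=F_{\nabla_{ij}}$.

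Next I would identify the two $3$-forms to be compared. By Definition~\ref{gacon}, the curving of the $1$-connection $B$ is the global $3$-form whose restriction to $U_i$ is $dB_i$; this is globally well-defined because $d(B_j-B_i)=dF_{\nabla_{ij}}=0$, the unitary curvatures being closed real $2$-forms. On the other hand, the torsion $H$ is the \eqref{hache}-form of the \emph{same} splitting $s$. The corollary therefore reduces to verifying, in a local trivialization identifying $E_\nabla|_{U_i}$ with $\TT U_i$ equipped with the standard Courant bracket, the identity
\begin{equation*}
\IP{s(X),[s(Y),s(Z)]} = dB_i(X,Y,Z),\qquad s|_{U_i}(X)=X+i_XB_i.
\end{equation*}

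This last identity is the one step I expect to require care with signs and factors rather than ideas. Substituting $s(X)=X+i_XB_i$ into the Courant bracket $[Y+i_YB_i,Z+i_ZB_i]=[Y,Z]+L_Y(i_ZB_i)-i_Zd(i_YB_i)$, pairing the result with $X+i_XB_i$ under $\IP{\cdot,\cdot}$, and collecting terms by Cartan calculus yields exactly $dB_i(X,Y,Z)$. This is the standard fact, already implicit in the passage preceding the definition of torsion, that changing the canonical splitting of $\TT M$ by a $2$-form twists the characteristic $3$-form by its exterior derivative; its only delicate point is confirming that the very same local $2$-forms $B_i$ govern both the splitting (through $s|_{U_i}=X+i_XB_i$) and the $1$-connection (through $B_j-B_i=F_{\nabla_{ij}}$), which is precisely what the bijection of Theorem~\ref{corresp} guarantees. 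Matching the local expressions $dB_i$ then identifies the curving of $B$ with the torsion $H$ globally, which is the assertion of the corollary.
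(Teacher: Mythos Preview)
Your proposal is correct and follows exactly the route the paper intends: the corollary is stated without proof because it is immediate from the bijection between isotropic splittings and $1$-connections (Theorem~\ref{corresp}/Corollary~\ref{unitgerb}) together with the definition of torsion just given, and you have simply unpacked this. Your local verification that $\IP{s(X),[s(Y),s(Z)]}$ agrees with $dB_i(X,Y,Z)$ when $s|_{U_i}(X)=X+i_XB_i$ is the only point requiring computation, and it is the standard Cartan-calculus identity underlying the passage from~\eqref{hache} to the characterization of Courant algebroids by closed $3$-forms; up to the paper's conventional factor of $\tfrac{1}{2}$ in the pairing, this is exactly right.
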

We wish to describe the geometric structures induced on $M$ by the generalized K\"ahler pair $(\JJ_+,\JJ_-)$.  First, we leave aside questions of integrability and describe \emph{almost} generalized K\"ahler structures, which are generalized K\"ahler structures without the Courant involutivity conditions on $\JJ_+$ and $\JJ_-$. 

An almost generalized complex structure $\JJ_+$ is compatible with the generalized metric $G$ when it preserves $C_+$ (and hence, necessarily, $C_-$), or equivalently, when it commutes with $G$.  This compatibility is also equivalent to the fact that $\JJ_- := G\JJ_+$ is an almost generalized complex structure.  Since $C_\pm$ are the $\pm 1$ eigenbundles of $G$, we have  
\begin{equation}\label{agre}
\JJ_+|_{C_\pm} = \pm \JJ_-|_{C_\pm},
\end{equation}
and so the complex structures on the bundles $C_\pm$ induced by $\JJ_+,\JJ_-$ coincide up to sign.  Using the identifications of metric bundles
\begin{equation*}
s_\pm:(TM,g)\lra (C_\pm,\pm \IP{\cdot,\cdot}),
\end{equation*}
we obtain two almost complex structures $I_+, I_-$ on the manifold $M$, each of which is compatible with the Riemannian metric $g$, hence forming an almost bi-Hermitian structure.  We now show that the correspondence $(\JJ_+,\JJ_-)\mapsto (g,I_+, I_-)$ is an equivalence. 
\begin{theorem}\label{linalg}
An almost generalized K\"ahler structure $(\JJ_+,\JJ_-)$ on $E$ is equivalent to the data $(s,g,I_+,I_-)$, where $s$ is an isotropic splitting of $E$, $g$ is a Riemannian metric, and $I_\pm$ are almost complex structures compatible with $g$.  
\end{theorem}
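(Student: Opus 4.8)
The plan is to exhibit mutually inverse constructions between the two collections of data, most of the forward direction $(\JJ_+,\JJ_-)\mapsto(s,g,I_+,I_-)$ having already been assembled in the discussion preceding the statement. First I would package that forward map cleanly. Given $(\JJ_+,\JJ_-)$, the endomorphism $G=-\JJ_+\JJ_-$ is self-adjoint with $G^2=1$, using $\JJ_\pm^2=-1$, the orthogonality of each $\JJ_\pm$, and their commutativity; and $G(x,y)=\IP{\JJ_+ x,\JJ_- y}$ is positive-definite by hypothesis, so $G$ is a generalized metric. By the Proposition characterizing generalized metrics, $G$ is equivalent to a pair $(s,g)$ whose $\pm1$-eigenbundles $C_\pm$ are given by~\eqref{splits}. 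Since $\JJ_+$ commutes with $G$ it preserves each $C_\pm$, and transporting $\JJ_+|_{C_\pm}$ through the identifications $s_\pm\colon(TM,g)\to(C_\pm,\pm\IP{\cdot,\cdot})$ yields endomorphisms $I_\pm$ of $TM$. These square to $-1$ because $\JJ_+^2=-1$, and they are $g$-orthogonal because $\JJ_+$ is $\IP{\cdot,\cdot}$-orthogonal and the $s_\pm$ are isometries up to the sign $\pm$, which does not affect orthogonality.

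Next I would build the inverse. Starting from $(s,g,I_+,I_-)$, I use $s$ to fix the subbundles $C_\pm$ via~\eqref{splits} together with the isometries $s_\pm$, and I define $\JJ_+$ block-diagonally with respect to $E=C_+\oplus C_-$, acting as $s_+ I_+ s_+^{-1}$ on $C_+$ and $s_- I_- s_-^{-1}$ on $C_-$; I then set $\JJ_-$ to agree with $\JJ_+$ on $C_+$ and with $-\JJ_+$ on $C_-$, as dictated by~\eqref{agre}. Both operators are block-diagonal and hence commute; each squares to $-1$ since the $I_\pm$ do; and each is $\IP{\cdot,\cdot}$-orthogonal, because orthogonality may be checked separately on the mutually orthogonal summands $C_\pm$, where it reduces to the $g$-orthogonality of $I_\pm$. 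Finally $G=-\JJ_+\JJ_-$ restricts to $+1$ on $C_+$ and $-1$ on $C_-$, so $G(x,y)=\IP{\JJ_+ x,\JJ_- y}$ equals $\pm\IP{\cdot,\cdot}$ on $C_\pm$ and is positive-definite; thus $(\JJ_+,\JJ_-)$ is a genuine almost generalized K\"ahler structure.

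It then remains to check the two constructions are mutually inverse, which is immediate once set up. Applying the forward map to a reconstructed $(\JJ_+,\JJ_-)$ returns the same $G$, hence by the generalized-metric Proposition the same $(s,g)$ and the same eigenbundles $C_\pm$, and conjugating back by $s_\pm$ recovers the original $I_\pm$. Conversely, feeding the forward map's output $(s,g,I_+,I_-)$ into the block-diagonal recipe rebuilds $\JJ_+$ on each eigenbundle and then $\JJ_-=G\JJ_+$, returning the original pair.

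The main obstacle is purely one of bookkeeping: tracking the sign conventions so that ``$I_\pm$ compatible with $g$'' translates faithfully into orthogonality of the transported operators for the negative-definite pairing on $C_-$, and confirming that the sign pattern of~\eqref{agre} is exactly what makes $G=-\JJ_+\JJ_-$ equal to $+1$ on $C_+$ and $-1$ on $C_-$. Once the isometries $s_\pm\colon(TM,g)\to(C_\pm,\pm\IP{\cdot,\cdot})$ are fixed there is no analytic content, since integrability plays no role at this linear-algebraic level.
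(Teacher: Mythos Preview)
Your argument is correct and follows essentially the same approach as the paper: the forward map is the one already assembled in the discussion, and the inverse is exactly the block-diagonal reconstruction~\eqref{prerec} from $I_\pm$ via the isometries $s_\pm$, with $\JJ_-$ determined by the sign pattern~\eqref{agre}. The only difference is that you spell out the verifications (orthogonality, commutativity, positive-definiteness, mutual inversity) in more detail, whereas the paper leaves these as ``easily seen'' and instead records the explicit matrix formula~\eqref{reconstruct} for later use.
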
 
\begin{proof}
We have already explained how to extract $(s,g,I_\pm)$ from $(\JJ_+,\JJ_-)$.   To reconstruct $(\JJ_+,\JJ_-)$ from the bi-Hermitian data, we construct definite splittings $s_\pm$ via 
\begin{equation*}
s_\pm := s + g : TM\lra E,
\end{equation*}  
following Equation~\eqref{splits}, and use the fact that $\JJ_+,\JJ_-$ are built from the complex structures $I_\pm$ by transporting them to $C_\pm$ and using Equation~\eqref{agre}:
\begin{equation}\label{prerec}
\JJ_\pm := s_+ I_+ s_+|_{C_+}^{-1} \pm s_- I_- s_-|_{C_-}^{-1},
\end{equation}
which expands to the expression:
\begin{equation}\label{reconstruct}
\JJ_\pm=s_*^{-1}\frac{1}{2}
\begin{pmatrix}I_+\pm I_- & -(\omega_+^{-1}\mp\omega_-^{-1}) \\
\omega_+\mp\omega_-&-(I^*_+\pm I^*_-)\end{pmatrix}s_*,
\end{equation}
where $s_*:E\lra \TT M$ is the isomorphism induced by $s$, and $\omega_\pm:=gI_\pm$ are the nondegenerate 2-forms determined by the almost Hermitian structures $(g,I_\pm)$. The two constructions are easily seen to be mutually inverse.
\end{proof}
Before proceeding to translate the integrability condition from the generalized complex structures to the bi-Hermitian data, we make some comments concerning orientation. 
\begin{remark}The \emph{type}~\cite{Gualtieri:qr} of a generalized complex structure $\JJ$ at a point is defined to be 
\begin{equation*}
\mathrm{type}(\JJ):=\tfrac{1}{2}\mathrm{corank}_\RR(Q),
\end{equation*}
where $Q=\pi\circ\JJ|_{T^*}$ is the real Poisson structure associated to $\JJ$.  On a real $2n$-manifold, the type may vary between $0$, where $\JJ$ defines a symplectic structure, and $n$, where it defines a complex structure.  The parity of the type, however, is locally constant, as it is determined by the orientation induced by $\JJ$ on $E$ ($\det E$ is canonically trivial and $\tfrac{1}{2n!}\det\JJ=+1$ or $-1$ as $\type(\JJ)$ is even or odd, respectively).  If we have an almost generalized K\"ahler structure on a real $2n$-manifold, the equation $G = -\JJ_+\JJ_-$ yields 
\begin{equation*}
\det\JJ_+\det\JJ_- = \det G = (-1)^n,
\end{equation*}
Implying that $\JJ_+$ and $\JJ_-$ must have equal parity in real dimension $4k$ and unequal parity in real dimension $4k+2$.  Furthermore, by Equation~\eqref{prerec}, the parity of $\JJ_+$ is even or odd as the orientations induced by $I_\pm$ agree or disagree, respectively.  This leads immediately to the fact that in real dimension $4k$, both $\JJ_\pm$ may either have even parity, in which case $I_\pm$ induce the same orientation, or odd parity, in which case $I_\pm$ induce opposite orientations on $M$.  In dimension $4k+2$, however, there is no constraint placed on the orientations of $I_\pm$, since $I_+$ may be replaced with $-I_+$ without altering the parity of $\JJ_\pm$. 
\end{remark}
\begin{example}\label{typeorient}
If $\dim_\RR M = 4$, an almost generalized K\"ahler structure may either have $\type(\JJ_+)=\type(\JJ_-)=1$, in which case $I_\pm$ induce opposite orientations, or $\JJ_\pm$ both have even type, in which case $I_\pm$ must induce the same orienation on $M$.
\end{example}

\subsection{Integrability and bi-Hermitian geometry}
Let $(s,g,I_\pm)$ be the almost bi-Hermitian data corresponding to an almost generalized K\"ahler structure $\JJ_\pm$, as in Theorem~\ref{linalg}.  In this section, we describe the integrability conditions on $(s,g,I_\pm)$ corresponding to the integrability of $\JJ_+$ and $\JJ_-$.  

Recall that the integrability condition for $\JJ_\pm$ is that the $+i$-eigenbundles $L_\pm = \ker(\JJ_\pm - i\bf{1})$ are involutive for the Courant bracket on $E\otimes\CC$.  Since $\JJ_\pm$ commute, the eigenbundle of $\JJ_+$ decomposes into eigenbundles of $\JJ_-$, so that 
\begin{equation}\label{defell}
L_+=\ell_+\oplus\ell_-,
\end{equation}
where $\ell_+ = L_+\cap L_-$ and $\ell_- = L_+\cap \overline{L_-}$. Since $G=-\JJ_+\JJ_-$ has eigenvalue $+1$ on $\ell_+\oplus \overline\ell_+$, we also have
\begin{equation}\label{compg}
C_\pm\otimes\CC = \ell_\pm \oplus \overline\ell_\pm.
\end{equation}
As a result, we obtain a decomposition of the Courant algebroid into four isotropic subbundles, each of complex dimension $n$ on a real $2n$-manifold:
\begin{equation}\label{kahldeco}
E\otimes\CC = \ell_+\oplus\ell_-\oplus\overline\ell_+\oplus\overline \ell_-.
\end{equation}
Since $\ell_\pm$ are intersections of involutive subbundles, they are individually involutive.  This is actually an equivalent characterization of the integrability condition on $\JJ_\pm$.
\begin{prop}
The almost generalized K\"ahler structures $\JJ_\pm$ are integrable if and only if both the subbundles $\ell_\pm$ described above are involutive. 
\end{prop}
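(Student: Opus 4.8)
The plan is to exploit the refined decomposition $E\otimes\CC=\ell_+\oplus\ell_-\oplus\overline{\ell_+}\oplus\overline{\ell_-}$ from~\eqref{kahldeco}, together with the orthogonality data it carries. First I would record the pairing structure. Since $L_+=\ell_+\oplus\ell_-$ is maximal isotropic, and since $\overline{\ell_-}=\overline{L_+}\cap L_-$ and $\ell_+=L_+\cap L_-$ both lie in $L_-$, a dimension count gives $L_-=\ell_+\oplus\overline{\ell_-}$; in particular $L_-$ is isotropic as well. Because $C_+\otimes\CC=\ell_+\oplus\overline{\ell_+}$ is orthogonal to $C_-\otimes\CC=\ell_-\oplus\overline{\ell_-}$ and each of $L_+,L_-,\overline{L_+},\overline{L_-}$ is isotropic, the symmetric pairing vanishes on every pair of the four summands except for the two nondegenerate dual pairings $\ell_+\leftrightarrow\overline{\ell_+}$ and $\ell_-\leftrightarrow\overline{\ell_-}$. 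Consequently the $\overline{\ell_+}$-component of any section is detected by pairing against sections of $\ell_+$, and the $\overline{\ell_-}$-component by pairing against $\ell_-$.

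The forward implication is immediate and is the observation already made: if $\JJ_\pm$ are integrable then $L_\pm$ and their conjugates $\overline{L_\pm}$ are all involutive, so $\ell_+=L_+\cap L_-$ and $\ell_-=L_+\cap\overline{L_-}$, being intersections of involutive subbundles, are involutive.

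For the converse, suppose $\ell_+$ and $\ell_-$ are involutive; since the Courant bracket is real, $\overline{\ell_+}$ and $\overline{\ell_-}$ are then involutive as well. To prove $L_+=\ell_+\oplus\ell_-$ is involutive it suffices to control the cross bracket $[\ell_+,\ell_-]$, i.e. to show its $\overline{\ell_+}$- and $\overline{\ell_-}$-components vanish. I would use two facts about the bracket: the invariance of the pairing, $\pi(e_1)\IP{e_2,e_3}=\IP{[e_1,e_2],e_3}+\IP{e_2,[e_1,e_3]}$, and the fact that the bracket is skew on isotropic pairs, $[e_1,e_2]=-[e_2,e_1]$ whenever $\IP{e_1,e_2}=0$, which follows from~\eqref{skewfail} by polarization. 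Taking $a\in\ell_+$, $b\in\ell_-$ and $c\in\ell_+$, invariance gives $\IP{[a,b],c}=\pi(a)\IP{b,c}-\IP{b,[a,c]}$; here $\IP{b,c}=0$ and $[a,c]\in\ell_+$ by involutivity of $\ell_+$, so $\IP{b,[a,c]}=0$, whence the $\overline{\ell_+}$-component of $[a,b]$ vanishes.

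The step I expect to be the genuine obstacle is the $\overline{\ell_-}$-component, since applying invariance to $\IP{[a,b],c}$ with $c\in\ell_-$ reintroduces the unknown bracket $[a,c]\in[\ell_+,\ell_-]$ and is circular. The resolution is to rewrite $[a,b]=-[b,a]$ first, using skew-symmetry on the isotropic pair $(a,b)$, and only then apply invariance: $\IP{[a,b],c}=-\pi(b)\IP{a,c}+\IP{a,[b,c]}$, where now $\IP{a,c}=0$ and $[b,c]\in\ell_-$ by involutivity of $\ell_-$, forcing the $\overline{\ell_-}$-component to vanish too. Hence $[\ell_+,\ell_-]\subset\ell_+\oplus\ell_-=L_+$; together with the involutivity of $\ell_\pm$ and the skew-symmetry $[\ell_-,\ell_+]=-[\ell_+,\ell_-]$, this shows $L_+$ is involutive, so $\JJ_+$ is integrable. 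The identical argument applied to the pair $(\ell_+,\overline{\ell_-})$, whose summands again pair trivially and are each involutive, shows $L_-=\ell_+\oplus\overline{\ell_-}$ is involutive, establishing integrability of $\JJ_-$ and completing the equivalence.
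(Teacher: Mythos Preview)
Your proof is correct and follows essentially the same route as the paper: both arguments reduce involutivity of $L_+$ to showing that the cross bracket $[x_+,x_-]$ pairs trivially with sections of $\ell_+$ and $\ell_-$, using invariance of $\IP{\cdot,\cdot}$ for the first and the skew-symmetry trick $[x_+,x_-]=-[x_-,x_+]$ (valid since $\IP{x_+,x_-}=0$) followed by invariance for the second. The only cosmetic difference is that you phrase the conclusion as ``the $\overline{\ell_\pm}$-components vanish in the decomposition~\eqref{kahldeco},'' whereas the paper writes it as ``$[x_+,x_-]\in\ell_+^\bot\cap\ell_-^\bot=(\ell_+\oplus\ell_-)^\bot=L_+^\bot=L_+$'' using maximal isotropy directly; these are the same statement.
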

\begin{proof}
That the integrability of $\JJ_\pm$ implies the involutivity of $\ell_\pm$ is explained above.  Now let $\ell_\pm$ be involutive. We must show that $L_+ = \ell_+\oplus\ell_-$ and $L_- = \ell_+\oplus \overline\ell_-$ are involutive.  To prove that $L_+$ is involutive, we need only show that if $x_\pm$ is a section of $\ell_\pm$, then $[x_+,x_-]$ is a section of $L_+$.  We do this by showing $[x_+,x_-]$ is orthogonal to both $\ell_+$, $\ell_-$, and hence must lie in $\ell_+^\bot\cap\ell_-^\bot = (\ell_+\oplus\ell_-)^\bot = L_+^\bot = L_+$, where we have used the fact that $L_+$ is maximal isotropic.  For $y_\pm$ any section of $\ell_\pm$ we have:  
\begin{align*}
\IP{[x_+,x_-], y_+} &= \pi(x_+)\IP{x_-,y_+} - \IP{x_-,[x_+,y_+]} = 0,\\
\IP{[x_+,x_-], y_-} &= -\IP{[x_-,x_+], y_-} = \pi(x_-)\IP{x_+,y_-} - \IP{x_+,[x_-,y_-]} = 0,
\end{align*}
hence $[x_+,x_-]$ is in $L_+$, as required.  $L_-$ is shown to be involutive in the same way.  
\end{proof}
To understand what this integrability condition imposes on the bi-Hermitian data, we use Theorem~\ref{linalg} to express the bundles $\ell_\pm$ purely in terms of the data $(s,g, I_\pm)$.  By~\eqref{defell} and~\eqref{compg}, we see that $\ell_\pm$ is the $+i$ eigenbundle of $\JJ_+$ acting on $C_\pm\otimes\CC$.  Since the almost complex structures $I_\pm$ are defined via the restriction of $\JJ_+$ to $C_\pm$, we obtain:
\begin{equation}\label{ellexp}
\ell_\pm = \{(s \pm g)X\ :\ X\in T^{1,0}_\pm M\},
\end{equation}
where $T^{1,0}_\pm M$ is the $+i$ eigenbundle of the almost complex structure $I_\pm$. Using the 2-forms $\omega_\pm = g I_\pm$, we obtain a more useful form of Equation~\eqref{ellexp}:
\begin{align}\label{ellexp2}
\ell_\pm &= \{sX \mp i\omega_\pm X\ :\ X\in T^{1,0}_\pm M\}\notag\\
&=e^{\mp i\omega_\pm}s(T^{1,0}_\pm M),
\end{align}
where $e^{\mp i\omega_\pm}$ acts on $x\in E$ via $x\mapsto x + i_{\pi(x)}(\mp i\omega_\pm)$. 
\begin{theorem}
Let $(\JJ_+,\JJ_-)$ be an almost generalized K\"ahler structure, described equivalently by the almost bi-Hermitian data $(s,g,I_+, I_-)$ as above. $(\JJ_+,\JJ_-)$ is integrable if and only if $I_\pm$ are integrable complex structures on $M$, and the following constraint holds: 
\begin{equation}\label{constraint}
\pm d^c_\pm \omega_\pm = H, 
\end{equation}
where $H\in\Omega^{3,\cl}(M,\RR)$ is the closed 3-form corresponding to the section $s$ via Equation~\eqref{hache}, and $d^c_\pm = i(\delbar_\pm - \del_\pm)$ are the real Dolbeault operators corresponding to the complex structures $I_\pm$.
\end{theorem}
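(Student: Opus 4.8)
The plan is to reduce the integrability of $(\JJ_+,\JJ_-)$ to a pair of statements about the bundles $\ell_\pm$, and then to translate the involutivity of each $\ell_\pm$ into a condition on $(I_\pm,\omega_\pm,H)$ by stripping off a $B$-field transform. By the preceding Proposition, $(\JJ_+,\JJ_-)$ is integrable precisely when both $\ell_+$ and $\ell_-$ are involutive for the Courant bracket on $E\otimes\CC$. First I would use the isotropic splitting $s$ to identify $E$ with $\TT M$ carrying the $H$-twisted Courant bracket $[\cdot,\cdot]_H$, where $H\in\Omega^{3,\cl}(M,\RR)$ is the torsion $3$-form attached to $s$ by~\eqref{hache}. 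Under this identification, Equation~\eqref{ellexp2} presents $\ell_\pm$ as the $B$-field transform $e^{\mp i\omega_\pm}(T^{1,0}_\pm M)$ of the honest tangent subbundle $T^{1,0}_\pm M\subset TM\otimes\CC\subset\TT M$.

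Next I would invoke the transformation law for the twisted bracket under a (not necessarily closed) $2$-form $B$. A short Cartan-calculus computation gives $[e^Bx,e^By]_H-e^B[x,y]_H=-i_{\pi(x)}i_{\pi(y)}\,dB$, so that $e^{-B}$ conjugation shows $e^B$ intertwines $[\cdot,\cdot]_H$ with $[\cdot,\cdot]_{H-dB}$. Since $e^{\mp i\omega_\pm}$ is a bundle isomorphism, specializing to $B=\mp i\omega_\pm$ shows that $\ell_\pm=e^{\mp i\omega_\pm}(T^{1,0}_\pm M)$ is involutive for $[\cdot,\cdot]_H$ if and only if $T^{1,0}_\pm M$ is involutive for the twisted bracket $[\cdot,\cdot]_{H'}$ with $H'=H\pm i\,d\omega_\pm$.

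I would then compute, for sections $X,Y$ of $T^{1,0}_\pm M$ (with vanishing cotangent component), the twisted bracket $[X,Y]_{H'}=[X,Y]+i_Xi_YH'$, whose tangent part is the ordinary Lie bracket of vector fields and whose cotangent part is the $1$-form $i_Xi_YH'$. Because $T^{1,0}_\pm M$ has no cotangent component, involutivity splits into two independent requirements: the Lie bracket of $(1,0)$-vectors must again be of type $(1,0)$, which is exactly the Newlander--Nirenberg integrability of $I_\pm$; and the $1$-form $i_Xi_YH'$ must vanish for all $X,Y\in T^{1,0}_\pm M$, which says precisely that the $(3,0)$ and $(2,1)$ components of $H'$ (with respect to $I_\pm$) vanish.

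Finally I would unwind the constraint on $H'$. Since $\omega_\pm=gI_\pm$ is of type $(1,1)$ for $I_\pm$, the form $d\omega_\pm=\del_\pm\omega_\pm+\delbar_\pm\omega_\pm$ has only $(2,1)$ and $(1,2)$ parts, so $(H')^{3,0}=H^{3,0}$ and $(H')^{2,1}=H^{2,1}\pm i\,\del_\pm\omega_\pm$. Their vanishing gives $H^{3,0}=0$ and $H^{2,1}=\mp i\,\del_\pm\omega_\pm$; conjugating and using that $H$ is real supplies the matching $(0,3)$ and $(1,2)$ identities for free, and summing the four type-components reassembles exactly the real equation $\pm d^c_\pm\omega_\pm=H$, with $d^c_\pm=i(\delbar_\pm-\del_\pm)$. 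The main obstacle I anticipate is purely one of bookkeeping: pinning down the sign conventions in the $B$-field transformation law and in the interior-product and type conventions so that the two complex conditions reassemble with the correct overall sign into the stated real equation; once these are fixed, the Newlander--Nirenberg step and the reality argument are routine.
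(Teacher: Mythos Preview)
Your proposal is correct and follows essentially the same route as the paper: both reduce to the involutivity of $\ell_\pm$, write $\ell_\pm = e^{\mp i\omega_\pm}s(T^{1,0}_\pm M)$, and extract from the $H$-twisted Courant bracket the two conditions that $[X,Y]\in T^{1,0}_\pm M$ (integrability of $I_\pm$) and $(H\pm i\,d\omega_\pm)^{(3,0)+(2,1)}=0$, which is then reassembled via the type decomposition and reality of $H$ into $\pm d^c_\pm\omega_\pm = H$. The only cosmetic difference is that you first invoke the general $B$-field transformation law $e^B:[\cdot,\cdot]_{H-dB}\to[\cdot,\cdot]_H$ and then read off the condition on $T^{1,0}_\pm M$, whereas the paper computes the bracket of $e^{\mp i\omega_\pm}s(X)$ with $e^{\mp i\omega_\pm}s(Y)$ directly; the underlying Cartan-calculus identity is the same.
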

\begin{proof}
Using expression~\eqref{ellexp2} for $\ell_\pm$, let $e^{\mp i\omega_\pm}s(X)$, $e^{\mp i\omega_\pm}s(Y)$  be two sections of $\ell_\pm$, where $X,Y$ are vector fields in $T^{1,0}_\pm M$.  Then the properties of the Courant bracket and the definition of $H$ from Equation~\eqref{hache} yield
\begin{align*}
[e^{\mp i\omega_\pm}s(X),e^{\mp i\omega_\pm}s(Y)] &= e^{\mp i\omega_\pm}[s(X), s(Y)] + i_Yi_Xd(\mp i\omega_\pm)\\
&=e^{\mp i\omega_\pm}(s([X,Y]) + i_Xi_Y H) + i_Yi_Xd(\mp i\omega_\pm)\\
&=e^{\mp i\omega_\pm}s([X,Y]) + i_Xi_Y (H \pm id\omega_\pm).
\end{align*}
This is again a section of $\ell_\pm$ if and only if $[X,Y]$ is in $T^{1,0}_\pm M$ and $(H\pm i d\omega_\pm)^{(3,0)+(2,1)}$ vanishes.  The first condition is precisely the integrability of the complex structures $I_\pm$, and in this case since $\omega_\pm$ is of type $(1,1)$ with respect to $I_\pm$, $d\omega$ has no $(3,0)$ component. The second condition is then the statement that 
\begin{equation*}
H^{2,1}  = \mp i \del\omega_\pm,
\end{equation*}
which together with its complex conjugate yields $H = \pm d^c_\pm \omega_\pm$, as required.
\end{proof}
The above theorem demonstrates that generalized K\"ahler geometry, involving a pair of commuting generalized complex structures, may be viewed classically as a bi-Hermitian geometry, in which the pair of usual complex structures need not commute, and with an additional constraint involving the torsion 3-form $H$.  This bi-Hermitian geometry is known in the physics literature: Gates, Hull, and Ro\v{c}ek showed in~\cite{MR776369} that upon imposing $N=(2,2)$ supersymmetry, the geometry induced on the target of a 2-dimensional sigma model is precisely this one. 

\begin{corollary}\label{nonalg}
If the torsion $H$ of a compact generalized K\"ahler manifold has nonvanishing cohomology class in $H^3(M,\RR)$, then the complex structures $I_\pm$ must both fail to satisfy the $dd^c_\pm$-lemma; in particular, they do not admit K\"ahler metrics and are not algebraic varieties. 
\end{corollary}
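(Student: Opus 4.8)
The plan is to argue by contradiction, using the constraint \eqref{constraint} just established, which exhibits the torsion as $H = d^c_+\omega_+ = -d^c_-\omega_-$. This is the crucial input: it realizes the closed real $3$-form $H$ as $d^c_\pm$-exact for \emph{both} complex structures simultaneously. Moreover, since $(d^c_\pm)^2 = 0$ (a direct consequence of $\del^2 = \delbar^2 = \del\delbar + \delbar\del = 0$), the form $H$ is also $d^c_\pm$-closed; and of course $H$ is $d$-closed by hypothesis. Thus $H$ meets all three of the conditions that feed into the $dd^c$-lemma.

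Next I would recall the precise formulation to be used: on a compact complex manifold $(M,I)$, the $dd^c$-lemma asserts that any form which is $d$-closed, $d^c$-closed, and $d^c$-exact is necessarily $dd^c$-exact (equivalently, $\ker d\cap \mathrm{im}\,d^c = \mathrm{im}\,dd^c$). Suppose, for contradiction, that $I_+$ satisfies the $dd^c_+$-lemma. By the previous paragraph $H$ satisfies all the hypotheses, so there is a form $\beta$ with $H = dd^c_+\beta = d(d^c_+\beta)$. Hence $H$ is $d$-exact and $[H] = 0$ in $H^3(M,\RR)$, contradicting the assumption $[H]\ne 0$. The identical argument applied to $H = -d^c_-\omega_-$ rules out the $dd^c_-$-lemma for $I_-$. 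The one bookkeeping point requiring care is reality: since $dd^c$ commutes with complex conjugation, one may replace $\beta$ by its real part and keep every form real, so that $H$ is $d$-exact as a real form; this is exactly what the standard equivalent formulations of the lemma guarantee.

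It remains to deduce the geometric conclusions. Here I would invoke the classical facts that every compact K\"ahler manifold satisfies the $dd^c$-lemma -- an immediate consequence of the K\"ahler identities and Hodge theory -- and that every smooth projective variety carries a K\"ahler form obtained by restricting the Fubini--Study form, hence is K\"ahler. Consequently the failure of the $dd^c_\pm$-lemma just established shows that neither $(M,I_+)$ nor $(M,I_-)$ admits a K\"ahler metric, and in particular neither is algebraic, nor even Moishezon, since Moishezon manifolds also satisfy the $\del\delbar$-lemma. The entire mathematical content is carried by the constraint \eqref{constraint}; the main thing to get right is the precise equivalent formulation of the $dd^c$-lemma invoked in the contradiction step.
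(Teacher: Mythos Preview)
Your argument is correct and essentially identical to the paper's: both use the constraint $H = \pm d^c_\pm\omega_\pm$ to exhibit the closed form $H$ as $d^c_\pm$-exact, then apply the $dd^c_\pm$-lemma to conclude $H = dd^c_\pm(\text{something})$ is $d$-exact, a contradiction. You are slightly more explicit about the reality bookkeeping and the Moishezon remark, but the mathematical content is the same.
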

\begin{proof}
Suppose $I_+$ satisfies the $dd^c_+$-lemma. Then since $H = d^c\omega_+$ and $dH=0$, we conclude there exists $a_+\in\Omega^1(M,\RR)$ with $H = dd^c_+ a_+$, implying $H$ is exact in either case, as required. The same argument holds for $I_-$.
\end{proof}

\subsection{Examples of generalized K\"ahler manifolds}

The main examples of generalized K\"ahler manifolds in the literature were constructed in several different ways: by imposing symmetry~\cite{MR2217300}, by a generalized K\"ahler reduction procedure analogous to symplectic reduction~\cite{MR2323543, MR2397619, MR2249799}, by recourse to twistor-theoretic results on surfaces~\cite{MR1702248, MR2287917}, by a flow construction using the underlying real Poisson geometry~\cite{MR2371181, Gualtieri:2007fe}, and by developing a deformation theory for generalized K\"ahler structures~\cite{Goto:2007if, MR2272873, Goto:2009ix, Goto:2009zm} whereby one may deform usual K\"ahler structures into generalized ones.  We elaborate on some illustrative examples from~\cite{Gualtieri:rp}.

\begin{example}\label{hkex}
Let $(M,g,I,J,K)$ be a hyper-K\"ahler structure. Then clearly
$(g,I,J)$ is a bi-Hermitian structure, and since
$d\omega_I=d\omega_J=0$, we see that $(g,I,J)$ defines a
generalized K\"ahler structure for the standard Courant structure on $\TT M$.  From formula~(\ref{reconstruct}), we reconstruct the generalized
complex structures:
\begin{equation}\label{HKbih}
\JJ_{\pm}=\frac{1}{2}
\left(\begin{matrix}I\pm J & -(\omega_I^{-1}\mp\omega_J^{-1}) \\
\omega_I\mp\omega_J&-(I^*\pm J^*)\end{matrix}\right).
\end{equation}
Note that that~(\ref{HKbih}) describes two generalized complex
structures of symplectic type, a fact made manifest via the following expression:
\begin{align*}
\JJ_{\pm}&=e^{\pm\omega_K}
\left(\begin{matrix} 0& -\tfrac{1}{2}(\omega_I^{-1}\mp\omega_J^{-1}) \\
\omega_I\mp\omega_J&0\end{matrix}\right)
e^{\mp\omega_K}.
\end{align*}
The same observation holds for any two non-opposite complex structures $I_1,I_2$ in the 2-sphere of hyper-K\"ahler complex structures, namely that the bi-Hermitian structure given by $(g,I_1,I_2)$ defines a generalized K\"ahler structure where both generalized complex structures are of symplectic type. 
\end{example}

The bi-Hermitian structure obtained from a hyperk\"ahler
structure is an example of a \emph{strongly} bi-Hermitian
structure in the sense of~\cite{MR1702248}, i.e. a bi-Hermitian structure such that $I_+$ is nowhere equal to $\pm I_-$.  From expression~\eqref{reconstruct}, it is clear that in 4 dimensions, strongly
bi-Hermitian structures with equal orientation correspond exactly
to generalized K\"ahler structures where both generalized complex
structures are of symplectic type.

\begin{example}
The generalized K\"ahler structure described in Example~\ref{hkex} can be deformed using a method similar to that described in~\cite{Gualtieri:2007fe}.  The complex 2-form $\sigma_I=\omega_J+i\omega_K$ on a hyper-K\"ahler structure is a holomorphic symplectic form with respect to $I$, and similarly $\sigma_J=-\omega_I + i\omega_K$ is holomorphic symplectic with respect to $J$.  As in Equation~\eqref{holpoi}, these define generalized complex structures on $\TT M$ given by:
\begin{equation}
\JJ_{\sigma_I}:=\begin{pmatrix}
I & \omega_K^{-1}\\
0 & -I^*
\end{pmatrix},\ \ \ \ \ 
\JJ_{\sigma_J}:=\begin{pmatrix}
J & \omega_K^{-1}\\
0 & -J^*
\end{pmatrix}.
\end{equation}
Interestingly, the symmetry $e^F$, for the closed 2-form $F=\omega_I+\omega_J$, takes $\JJ_{\sigma_I}$ to $\JJ_{\sigma_J}$, so that 
\begin{equation*}
e^F\JJ_{\sigma_I}e^{-F}=\JJ_{\sigma_J}.
\end{equation*}
Now choose $f\in C^\infty(M,\RR)$ and let $X_f$ be its Hamiltonian vector field for the Poisson structure $\omega_K^{-1}$.  Let $\varphi_t$ be the flow generated by this vector field, and define  
\begin{equation*}
F_t(f):= \int_0^t\varphi^*_s(dd^c_J f)ds.
\end{equation*}
In~\cite{Gualtieri:2007fe}, it is shown that the symmetry $e^{F_t}$ takes $\JJ_{\sigma_J}$ to the deformed generalized complex structure
\begin{equation*}
\JJ_{\sigma_{J_t}}=\begin{pmatrix}
J_t & \omega_K^{-1}\\
0 & -J^*_t
\end{pmatrix},
\end{equation*}
where $J_t= \varphi^*_t(J)$, and that as a result, $I$ and $J_t$ give a family of generalized K\"ahler structures with respect to the deformed metric
\begin{equation*}
g_t = -\tfrac{1}{2}(F+F_t(f))(I+J_t),
\end{equation*}
where $g_0=-\tfrac{1}{2}F(I+J)$ is the original hyper-K\"ahler metric. 

The idea of deforming a Hyperk\"ahler structure to obtain a bi-Hermitian structure appeared, with a different formulation, in~\cite{MR1702248} (see also~\cite{MR2217300}), where it is shown for surfaces
that the Hamiltonian vector field can be chosen so that the
resulting deformed metric is not anti-self-dual, and hence by a result in~\cite{Pontecorvo}, cannot admit more than two distinct orthogonal complex structures.
\end{example}

\begin{example}[The Hopf surface: odd generalized K\"ahler]\label{hopfodd}
Let $X$ be the standard Hopf surface from Example~\ref{hopf}, and denote its complex structure by $I_-$.  The product metric on $S^3\times S^1$ can be written as follows:
\begin{equation}\label{Hopfmetric}
g=\tfrac{1}{4\pi R^{2}}(dx_1 d\bar x_1 + dx_2d\bar x_2),
\end{equation}
for $R^2 = x_1\bar x_1 + x_2\bar x_2$.  The complex structure $I_-$ is manifestly Hermitian for this metric, with associated 2-form $\omega_-=gI_-$ given by:
\begin{equation*}
\omega_-=\tfrac{i}{4\pi R^2}(dx_1\wedge d\bar x_1 + dx_2\wedge d\bar x_2),
\end{equation*}
and its complex derivative $H=-d^c\omega_-$ is a real closed 3-form on $X$ generating $H^3(X,\ZZ)$.

Now let $I_+$ be the
complex structure on the Hopf surface obtained by modifying the complex
structure on $\CC^2$ such that $(x_1,\overline{x_2})$ are holomorphic coordinates; note that $I_\pm$ have opposite
orientations, and are both Hermitian with respect to $g$. Also, it
is clear that
$d^c_{+}\omega_{+}=-d^c_-\omega_-=H$. Therefore, the bi-Hermitian data $(g, I_\pm)$ defines a generalized K\"ahler structure on $(\TT X, H)$, the standard Courant algebroid twisted by $H$.  Since $I_\pm$ induce opposite orientations, the corresponding generalized complex structures $\JJ_\pm$ are both of odd type, by Example~\ref{typeorient}.  Note that the complex structures
$I_\pm$ happen to commute, a special case studied in~\cite{MR2287917}.    This particular generalized K\"ahler geometry first appeared in the context of a supersymmetric $SU(2)\times U(1)$ Wess-Zumino-Witten model~\cite{RocekSchoutensSevrin}.
\end{example}

\begin{example}[The Hopf surface: even generalized K\"ahler]\label{evenhopf}
Let $(g,I_-)$ be the standard Hermitian structure on the Hopf surface, as in Example~\ref{hopfodd}.  We specify a different complex structure $I_+$ by providing a generator $\Omega_+\in \Omega^{2,0}_+(X)$, namely:
\begin{equation}\label{iplushopf}
\Omega_+:=\tfrac{1}{R^4}(\bar x_1 dx_1+x_2d\bar x_2)\wedge(\bar x_1 dx_2-x_2d\bar x_1).
\end{equation}
Comparing this with the usual complex structure, where the generator is given by $\Omega_- = \tfrac{1}{R^2}dx_1\wedge dx_2$, we see that $I_+$ coincides with $I_-$ along the curve $E_2=\{x_2=0\}$, and coincides with $-I_-$ along $E_1=\{x_1=0\}$. 
From the expression~\eqref{iplushopf}, we see that $\Omega_+$ spans an isotropic plane for the metric~\eqref{Hopfmetric}, hence $(g,I_+)$ is also Hermitian, with associated 2-form 
\begin{equation*}
\omega_+ = \tfrac{i}{4\pi R^2}(\theta_1\wedge \bar\theta_1 + \theta_2\wedge\bar\theta_2),
\end{equation*}
with $\theta_1 = \bar x_1 dx_1+x_2d\bar x_2$ and $\theta_2 = \bar x_1dx_2-x_2d\bar x_1$.  This 2-form also satisfies $d^c_+\omega_+ = H$, so that $(g,I_\pm)$ is an even generalized K\"ahler structure for $(\TT X, H)$.  From the explicit formulae~\eqref{reconstruct} for $\JJ_\pm$, we see that their real Poisson structures are given by 
\begin{equation}\label{Poissonexp}
Q_\pm = -\tfrac{1}{2}(\omega_+^{-1}\mp\omega^{-1}_-) = \tfrac{1}{2}(I_+\mp I_-)g^{-1}.
\end{equation}
Hence $Q_+$ drops rank from $4$ to $0$ along $E_2$, and $Q_-$ drops rank similarly on $E_1$. In other words, $\JJ_\pm$ are generically of symplectic type but each undergoes type change to complex type along one of the curves.   
\end{example}
The existence of generalized K\"ahler structures with nonzero torsion class on the Hopf surface implies, by Corollary~\ref{nonalg}, the well-known fact that the Hopf surface is non-algebraic.  It is natural to ask whether the Hopf surface might admit generalized K\"ahler structures with vanishing torsion class.  We now show that this is not the case.
\begin{prop}
Any generalized K\"ahler structure with $I_+$ given by the Hopf surface $X=(\CC^2-\{0\})/(x\mapsto 2x)$ must have nonvanishing torsion $[H]\in H^3(X,\RR)$. 
\end{prop}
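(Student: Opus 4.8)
The plan is to argue by contradiction: suppose a generalized K\"ahler structure exists on some exact Courant algebroid over $X$ with $I_+$ equal to the standard Hopf complex structure, yet $[H]=0$ in $H^3(X,\RR)$. By the bi-Hermitian description, constraint~\eqref{constraint} gives $H=d^c_+\omega_+$, where $\omega_+=gI_+$ is the fundamental form of the Hermitian structure $(g,I_+)$ and is therefore a strictly positive $(1,1)$-form for $I_+$. Thus it suffices to prove that $[d^c_+\omega_+]$ is nonzero in $H^3(X,\RR)$; note that this is a statement about $I_+$ alone, independent of the rest of the generalized K\"ahler data, and (as I will point out) it does not even use pluriclosedness of $\omega_+$.

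To detect the class I would pair it against $H^1(X,\RR)\cong\RR$, which is dual to $H^3(X,\RR)$ under the cup product on the closed oriented $4$-manifold $X$. A convenient closed generator of $H^1$ is $\theta=d\log R$, where $R^2=|x_1|^2+|x_2|^2$: since $R\circ\varphi=2R$, the form $d\log R$ is $\varphi$-invariant and descends to a closed $1$-form on $X$ with period $\log 2$ over the circle generating $H_1$. If $[H]=0$ then $\int_X H\wedge\theta=0$ by Stokes. On the other hand, integrating $d^c_+$ by parts on the closed complex surface $X$ yields the identity $\int_X d^c_+\omega_+\wedge\theta=-\int_X\omega_+\wedge d^c_+\theta$, and since $d^c d=-dd^c$ one has $d^c_+\theta=-d\,d^c_+\log R=-i\del_+\delbar_+\log R^2$.

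The crux is then a positivity statement. In the $I_+$-holomorphic coordinates $(x_1,x_2)$, the function $\log R^2=\log(|x_1|^2+|x_2|^2)$ is exactly the Fubini--Study potential pulled back from $\CC P^1$, so $i\del_+\delbar_+\log R^2$ is a semipositive $(1,1)$-form which is nonzero at every point of $X$. Wedging the strictly positive form $\omega_+$ against this nowhere-vanishing semipositive form gives a strictly positive multiple of the $I_+$-orientation volume form, so $\int_X H\wedge\theta=\int_X\omega_+\wedge i\del_+\delbar_+\log R^2>0$, contradicting the vanishing forced by $[H]=0$. I expect the main obstacle to be largely bookkeeping: pinning down the signs in the integration-by-parts identity and verifying that $i\del_+\delbar_+\log R^2$ is genuinely semipositive and nowhere zero for the complex structure $I_+$ --- this is precisely the point at which the special geometry of the Hopf surface (plurisubharmonicity of the radial potential) enters, and where one must be sure to integrate with respect to the orientation induced by $I_+$. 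Since pluriclosedness of $\omega_+$ plays no role, the argument in fact establishes $[d^c_+\omega_+]\neq0$ for \emph{every} Hermitian metric on the Hopf surface, sharpening Corollary~\ref{nonalg} in this case.
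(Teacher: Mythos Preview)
Your argument is correct and takes a genuinely different route from the paper's. The paper works through Dolbeault cohomology: it first shows that $H^{2,1}=-i\del_+\omega_+$ defines a nonzero class in $H^{2,1}_{\delbar}(X)$ by integrating $\omega_+$ over the null-homologous elliptic curve $E=\{x_1=0\}$ (a $\delbar$-primitive $\tau\in\Omega^{2,0}$ would force $\int_E\omega_+=\int_E(\tau+\bar\tau)=0$, contradicting positivity), and then uses $h^{2,1}=1$ together with an explicit generator $\nu^{2,1}$ to transfer this to the de Rham statement $[H]\neq 0$. You stay entirely in de Rham cohomology, pairing $H$ against the radial $1$-form $\theta=d\log R$ via Poincar\'e duality and reducing to the positivity of $\int_X\omega_+\wedge i\del_+\delbar_+\log R^2$. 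This bypasses the Hodge numbers, the holomorphic curve, and the Dolbeault-to-de Rham comparison; it is more direct and, as you correctly observe, does not invoke pluriclosedness of $\omega_+$ (the integration by parts needs only Stokes for $d^c_+$ on the closed manifold), so it in fact shows that $d^c_+\omega_+$ is never \emph{exact} for any Hermitian metric on the Hopf surface. The paper's approach, by contrast, makes the Dolbeault obstruction explicit and isolates the role of the null-homologous holomorphic curve, which may transport more readily to non-K\"ahler surfaces where no single global plurisubharmonic potential like $\log R$ is available.
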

\begin{proof}
The Hopf surface has $h^{2,1}=1$, generated by the $(2,1)$ component of the standard volume form $\nu$ of $S^3$, which satisfies $[\nu^{2,1}] = [\nu^{1,2}]=\tfrac{1}{2}[\nu]$ in de Rham cohomology.  Suppose that $X$ were the $I_+$ complex structure in a generalized K\"ahler structure with torsion $H$.  By the generalized K\"ahler condition, $H^{2,1} = -i\del_+\omega_+$, and so $d H^{2,1} = -i\delbar_+\del_+\omega_+=0$.  

We claim that $H^{2,1}$ must be nonzero in Dolbeault cohomology.  If not, we would have $\del_+\omega_+ = \delbar_+\tau$, for $\tau\in\Omega^{2,0}(X)$. Now let $E=\{x_1=0\}$, a null-homologous holomorphic curve in $X$, and let $D$ be a smooth 3-chain with $\del D  = E$. Then
\begin{equation*}
\int_E\omega_+ = \int_D d\omega_+ = \int_D d(\tau+\bar\tau) = \int_E(\tau+\bar\tau),
\end{equation*} 
which is a contradiction because $\omega_+$ is a positive $(1,1)$ form, forcing the left hand side to be nonzero, while $\tau$ is of type $(2,0)$ and vanishes on $E$.  

Because $h^{2,1}=1$, there must exist $\sigma\in\Omega^{2,0}$ such that $H^{2,1} = c\nu^{2,1} + \delbar\sigma$, with $c\in\CC^*$, and since $\delbar\sigma = d\sigma$, we have $[H^{2,1}] = c[\nu^{2,1}]$ in de Rham cohomology. But $[H^{2,1}]=\tfrac{1}{2}[H]$, since $H^{2,1}-H^{1,2} = -id\omega_+$.  Hence $[H]\neq 0$ in $H^3(M,\RR)$.   
\end{proof}
\begin{example}[Even-dimensional real Lie groups]
It has been known since the work of Samelson and
Wang~\cite{Samelson}, \cite{Wang} that any 
even-dimensional real Lie group $G$ admits left- and right-invariant
complex structures $J_L,J_R$.  If the group admits a bi-invariant positive-definite inner product $b(\cdot,\cdot)$, the complex structures can be chosen to be Hermitian with respect to $b$.  The bi-Hermitian structure $(b,J_L,J_R)$ is then a generalized K\"ahler structure on $(\TT G, H)$, where $H$ is the Cartan 3-form associated to $b$, defined by $H(X,Y,Z)=b([X,Y],Z)$. To see this, we compute $d^c_{J_L}\omega_{J_L}$:
\begin{equation*}
\begin{split}
A=d^c_{J_L}\omega_{J_L}(X,Y,Z)&=d\omega_{J_L}(J_LX,J_LY,J_LZ)\\
&=-b([J_LX,J_LY],Z)+c.p.\\
&=-b(J_L[J_LX,Y]+J_L[X,J_LY]+[X,Y],Z)+ c.p.\\
&=(2b([J_LX,J_LY],Z)+c.p.) - 3H(X,Y,Z)\\
&=-2A - 3H(X,Y,Z),
\end{split}
\end{equation*}
Proving that $d^c_{J_L}\omega_{J_L}=-H$.  Since the right Lie
algebra is anti-isomorphic to the left, the same calculation with
$J_R$ yields $d^c_{J_R}\omega_{J_R}=H$, and finally we have
\begin{equation*}
-d^c_{J_L}\omega_{J_L}=d^c_{J_R}\omega_{J_R}=H,
\end{equation*}
as required.  For $G=SU(2)\times U(1)$, we recover Example~\ref{evenhopf} from this construction.  Note that $J_L,J_R$ are isomorphic as complex manifolds, via the inversion on the group.
\end{example}

\section{Holomorphic Dirac geometry}\label{Holomorphicdiracgeometry}

In the previous section, we saw that a generalized K\"ahler structure on $M$ gives rise to a pair of complex manifolds $X_\pm = (M,I_\pm)$ with the same underlying smooth manifold.  In this section, we apply the results of Section~\ref{gerbdir} to Courant algebroids (and their prequantum gerbes) carrying generalized K\"ahler structures, shedding light on the relationship between the complex manifolds $X_\pm$.  Although there is no morphism between $X_+$ and $X_-$ in the holomorphic category, we show that $X_\pm$ are each equipped with holomorphic Courant algebroids which decompose as a sum of transverse holomorphic Dirac structures, and that the Dirac structures on $X_+$ are Morita equivalent to those on $X_-$.  This provides a holomorphic interpretation to the deformation theory of generalized complex structures, as well as to the notion of a generalized holomorphic bundle.   

\subsection{Holomorphic reduction}\label{holreduct}

Our main tool will be the decomposition~\eqref{kahldeco} of the Courant algebroid induced by the generalized K\"ahler structure:
\begin{equation*}
E\otimes\CC  =  \ell_+\oplus \ell_-\oplus\overline\ell_+\oplus\overline\ell_-.
\end{equation*}
The bundles $\overline\ell_\pm$ satisfy Definition~\ref{deflift}, since they are involutive isotropic liftings of the antiholomorphic tangent bundles of the two complex structures $I_\pm$.  Theorem~\ref{courvers} immediately yields the following.  
\begin{prop}\label{holcourants}
The bundles $\overline \ell_\pm$ are liftings of $T_{0,1} X_\pm$ to $E\otimes\CC$, hence define two reductions of $E\otimes\CC$ to holomorphic Courant algebroids $\EE_\pm$ over the complex manifolds $X_\pm$.  
\begin{equation}\label{fm}
\xymatrix{ & (E\otimes\CC,M)\ar@{-->}[dl]_{\overline\ell_-}\ar@{-->}[dr]^{\overline\ell_+} & \\ (\EE_-,X_-) & & (\EE_+,X_+)}
\end{equation}
Furthermore, the isomorphism class $[\EE_\pm]$ is given by the $(2,1)$ component (with respect to $I_\pm$) of the torsion of the generalized K\"ahler metric:
\begin{equation*}
[\EE_\pm] = [2H^{(2,1)_\pm}]\in H^1(X_\pm;\Omega^{2,\cl}).
\end{equation*}
\end{prop}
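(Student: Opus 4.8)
The plan is to handle the two assertions in turn: first that each $\overline\ell_\pm$ is a lifting of $T_{0,1}X_\pm$ in the sense of Definition~\ref{deflift}, so that Theorem~\ref{courvers} applies and produces $\EE_\pm$; and then to compute the isomorphism class of $\EE_\pm$ by running the explicit cocycle construction from the proof of Theorem~\ref{courvers}.

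For the first assertion, I would check the three conditions of Definition~\ref{deflift} for $\overline\ell_\pm\subset E\otimes\CC$. Isotropy is immediate from the orthogonal decomposition~\eqref{kahldeco}, all of whose summands are isotropic. Involutivity follows because the $\ell_\pm$ are involutive (as shown in the preceding proposition) and the Courant bracket on $E\otimes\CC$ is the complexification of a real bracket, so the conjugate $\overline\ell_\pm$ is again involutive. Finally, conjugating expression~\eqref{ellexp2} gives $\overline\ell_\pm = e^{\pm i\omega_\pm}s(T^{0,1}_\pm M)$, whence $\pi$ maps $\overline\ell_\pm$ isomorphically onto $T^{0,1}_\pm M = T_{0,1}X_\pm$, the correction term $i_Y(\pm i\omega_\pm)$ lying in $T^*M\otimes\CC=\ker\pi$. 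Hence Theorem~\ref{courvers} yields holomorphic Courant algebroids $\EE_\pm := E_{\overline\ell_\pm}$ over $X_\pm$, as in diagram~\eqref{fm}.

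For the class, I would use in the proof of Theorem~\ref{courvers} the distinguished isotropic splitting $s$ induced by the generalized metric, whose associated $3$-form $h_s$ is by definition the torsion $H$. Relative to this $s$, the lifting $\overline\ell_\pm$ is the graph of a $2$-form $\theta_\pm$. Since $\overline\ell_\pm = s_\pm(T^{0,1}_\pm M)$ with $s_\pm = s\pm g$ (Equation~\eqref{splits}) and, for $Y\in T^{0,1}_\pm M$, the identity $\omega_\pm = gI_\pm$ together with $I_\pm Y = -iY$ gives $i_Y(\pm i\omega_\pm)=\pm g(Y)$, this $2$-form is $\pm i\omega_\pm$ up to the sign convention of Remark~\ref{explclass}; in particular it is of pure type $(1,1)$ with respect to $I_\pm$, so $\theta_\pm^{0,2}=0$. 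The proof of Theorem~\ref{courvers} then represents $[\EE_\pm]$ in the resolution~\eqref{resolu} by the cocycle
\begin{equation*}
h_s^{3,0}+h_s^{2,1}+\del_\pm\theta_\pm^{1,1} = H^{(3,0)_\pm}+H^{(2,1)_\pm}+\del_\pm\theta_\pm^{1,1}.
\end{equation*}

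It then remains to feed in the integrability constraint~\eqref{constraint}, $\pm d^c_\pm\omega_\pm = H$. Writing $H = \pm i(\delbar_\pm-\del_\pm)\omega_\pm$ and decomposing by type (using that $\omega_\pm$ is of type $(1,1)$) gives $H^{(3,0)_\pm}=0$ and $H^{(2,1)_\pm}=\mp i\del_\pm\omega_\pm$. The same constraint identifies $\del_\pm\theta_\pm^{1,1}$ with a second copy of $H^{(2,1)_\pm}$, so the cocycle collapses to $2H^{(2,1)_\pm}$ and $[\EE_\pm]=[2H^{(2,1)_\pm}]$, as claimed. The one delicate point, and the step I would treat most carefully, is exactly this sign bookkeeping: one must fix the $2$-form representing $\overline\ell_\pm$ in the convention of Remark~\ref{explclass} and equation~\eqref{potent} so that $\del_\pm\theta_\pm^{1,1}$ \emph{adds} to $H^{(2,1)_\pm}$ rather than cancelling it. This is what produces the factor of $2$ (instead of $0$), and it is the only genuinely nonformal ingredient; everything else is an assembly of Definition~\ref{deflift}, Theorem~\ref{courvers}, and the constraint~\eqref{constraint}.
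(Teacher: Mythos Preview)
Your proposal is correct and follows essentially the same route as the paper: verify that $\overline\ell_\pm$ are liftings in the sense of Definition~\ref{deflift} (the paper asserts this just before the proposition and invokes Theorem~\ref{courvers}), then read off the cocycle $H^{(3,0)_\pm}+H^{(2,1)_\pm}+\del_\pm\theta_\pm^{1,1}$ from Remark~\ref{explclass} with $\theta_\pm$ the $(1,1)$-form $\mp i\omega_\pm$ coming from~\eqref{ellexp2}, and simplify using~\eqref{constraint} to obtain $2H^{(2,1)_\pm}$. Your explicit check that $H^{(3,0)_\pm}=0$ and your flagging of the sign convention needed to make $\del_\pm\theta_\pm^{1,1}$ add rather than cancel are exactly the points the paper's terse proof leaves implicit.
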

\begin{proof}
The existence of the reductions follows from Theorem~\ref{courvers}, as explained above.  To compute the isomorphism classes, we write the lifting $\overline\ell_\pm$ explicitly using Equation~\eqref{ellexp2}, namely $\ell_\pm = e^{\mp i\omega_\pm}s(T_{1,0}X_\pm)$, and use the explicit form for the cocycle given in Remark~\ref{explclass}, yielding
\begin{equation*}
[\EE_\pm]=[H^{(2,1)_{\pm}}  + \del (\mp i\omega_\pm)].
\end{equation*}
Since $H^{(2,1)_\pm} = \mp i \del\omega_\pm$ from~\eqref{constraint}, we obtain the required cocycle $2H^{(2,1)_\pm}$. 
\end{proof}
\begin{remark}\label{pmsplit}
To obtain a more explicit expression for $\EE_\pm$, we may use the canonical splitting $s$ given by the generalized K\"ahler metric to (smoothly) split the sequence
\begin{equation*}
\xymatrix{0\ar[r] & T^*_{1,0}X_\pm\ar[r] & \EE_\pm\ar[r] & T_{1,0}X_\pm\ar[r] & 0},
\end{equation*}
by defining the following map $s_\pm:T_{1,0}X_\pm\lra \EE_\pm = \overline\ell_\pm^\bot/\overline\ell_\pm$:
\begin{equation*}
s_\pm(X):= s(X)\mp gX =s(X)\pm i\omega_\pm X\mod \overline\ell_\pm,
\end{equation*}
 for $X\in T_{1,0}X_\pm$.  The holomorphic structure on $\EE_\pm$ is then computed via~\eqref{courdelbar}, using the Courant bracket on $\TT M$ given by the torsion 3-form $H$. The resulting Courant algebroid is $\EE_\pm = T_{1,0}X_\pm\oplus T^*_{1,0}X_\pm$, with a modified holomorphic structure as in Example~\ref{genex}:
\begin{equation*}
\overline{D}_\pm = \begin{pmatrix}\delbar_\pm& 0\\2H^{(2,1)_\pm}&\delbar_\pm\end{pmatrix}.
\end{equation*}
\end{remark}

The holomorphic Courant algebroids $(\EE_\pm,X_\pm)$ can be very different, with non-isomorphic underlying complex manifolds $X_\pm$. Nevertheless, they are closely related, as they are both reductions of one and the same smooth Courant algebroid, where the liftings $\overline\ell_\pm$ defining them are compatible in the sense that $\overline\ell_+ \oplus \overline\ell_-\subset E\otimes\CC$ is a Dirac structure, and therefore itself a Lie algebroid.  This configuration is well-known in the literature and is called a \emph{matched pair} of Lie algebroids~\cite{MR1430434, MR1460632}.  We now describe several consequences of these two compatible reductions.

Following the philosophy of symplectic reduction applied to Courant algebroids, the reduction of $E$ to $\EE_\pm$ allows us to reduce Dirac structures from $E$ to $\EE_\pm$.
As in Theorem~\ref{courvers}, we write the reduction of $E$ by a lifting $D\subset E$ as $E_D = D^\bot / D$.  Then the reduction of Dirac structures proceeds as follows~\cite{MR2323543}.  
\begin{prop}
Let $L\subset E$ be a Dirac structure such that $L\cap D^\bot$ has constant rank and $L$ is $D$-invariant, in the sense $[\OO(D),\OO(L)]\subset\OO(L)$. Then the subbundle $L_D\subset E_D$, defined by 
\begin{equation}\label{diracred}
L_D  :=  \frac{L\cap D^\bot + D}{D},
\end{equation}
is holomorphic with respect to the induced holomorphic structure~\eqref{courdelbar} on $E_D$ and defines a holomorphic Dirac structure in $E_D$ called the reduction of $L$.
\end{prop}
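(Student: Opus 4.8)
The plan is to establish the four defining properties of $L_D$ in turn, separating the purely linear-algebraic content (that $L_D$ is a well-defined maximal isotropic subbundle) from the differential content (that it is holomorphic and involutive), and to route the latter two through the bracket formula~\eqref{courdelbar} by always lifting sections into $L\cap D^\bot$.

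First I would handle maximal isotropy pointwise, using nondegeneracy of the pairing on $E$. Isotropy is immediate, since two elements of $L\cap D^\bot$ already pair to zero inside the isotropic $L$, and because $D\subset D^\bot$ the pairing descends to $E_D=D^\bot/D$ without ambiguity. For maximality I would avoid any regularity argument and instead invoke the orthogonality identity $(L\cap D^\bot)^\bot=L^\bot+D=L+D$, valid since $L^\bot=L$. Setting $r=\rk(L\cap D^\bot)$ and $k=\rk(L\cap D)$, this gives $\dim(L+D)=\dim E-r$; comparing with $\dim(L+D)=\tfrac12\dim E+\rk D-k$ produces $r-k=\tfrac12\dim E-\rk D=\tfrac12\dim E_D$. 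Since $D\subset D^\bot$ yields $L_D\cong(L\cap D^\bot)/(L\cap D)$, we obtain $\dim L_D=r-k=\tfrac12\dim E_D$, so $L_D$ is maximal isotropic. The constant-rank hypothesis on $L\cap D^\bot$ forces $r$, and hence $k$, to be constant, so $L_D$ is a genuine subbundle of the correct rank.

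The two substantive steps, holomorphicity and involutivity, I would derive from the same bracket computation. For holomorphicity, I would take a smooth section $\bar s\in\OO(L_D)$, lift it to $\tilde s\in\OO(L\cap D^\bot)$, and lift $X\in\Gamma^\infty(T_{0,1})$ to $\tilde X\in\OO(D)$; the claim is that $[\tilde X,\tilde s]\in\OO(L\cap D^\bot)$. Membership in $L$ is precisely the $D$-invariance hypothesis $[\OO(D),\OO(L)]\subset\OO(L)$, while membership in $D^\bot$ follows from the isotropy and involutivity of $D$: for $z\in\OO(D)$ one computes $\IP{[\tilde X,\tilde s],z}=\pi(\tilde X)\IP{\tilde s,z}-\IP{\tilde s,[\tilde X,z]}=0$, using $\IP{\tilde s,z}=0$ and $[\tilde X,z]\in\OO(D)$. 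Hence $\delbar_X\bar s=[\tilde X,\tilde s]\bmod D\in\OO(L_D)$, so $L_D$ is $\delbar$-stable, i.e. holomorphic in the structure~\eqref{courdelbar}. For involutivity I would take holomorphic sections $\bar s_1,\bar s_2$ of $L_D$, lift them to $s_1,s_2\in\OO(L\cap D^\bot)$, and argue that $[s_1,s_2]\in\OO(L\cap D^\bot)$ as well: it lies in $L$ because $L$ is involutive, and in $D^\bot$ because $\IP{[s_1,s_2],z}=\pi(s_1)\IP{s_2,z}-\IP{s_2,[s_1,z]}=0$, where $\IP{s_2,z}=0$ and $[s_1,z]=-[z,s_1]\in\OO(L)$ (the symmetrization being $2\,d\IP{s_1,z}=0$ by~\eqref{skewfail}, and $D$-invariance giving $[z,s_1]\in\OO(L)$), so that $s_2$ and $[s_1,z]$ both lie in the isotropic $L$. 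Thus $[s_1,s_2]\bmod D$, which represents the reduced bracket of $\bar s_1,\bar s_2$, lies in $L_D$.

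I expect the main obstacle to be bookkeeping rather than genuine depth: one must confirm that the holomorphic Courant bracket induced on $E_D$ in Theorem~\ref{courvers} is correctly represented by $[s_1,s_2]\bmod D$ for arbitrary lifts $s_i\in\OO(D^\bot)$, so that the reduced bracket is independent of the chosen lifts and compatible with~\eqref{courdelbar}. Once this descent is in hand, the proof uses its hypotheses economically: isotropy and involutivity of $D$ guarantee that brackets of lifts stay in $D^\bot$ and hence descend, $D$-invariance of $L$ supplies both the $\delbar$-stability and the crucial vanishing $\IP{s_2,[s_1,z]}=0$, and the constant-rank assumption serves only to make $L_D$ a smooth subbundle.
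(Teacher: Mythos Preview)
Your argument is correct and complete: the dimension count for maximal isotropy is clean, and the two bracket computations (for $\delbar$-stability and for involutivity) use exactly the right identities, with $D$-invariance of $L$ doing the essential work in both places. Your caveat about verifying that the reduced bracket is represented by $[s_1,s_2]\bmod D$ is well taken, but this is indeed how the bracket on $E_D$ is defined (via the Jacobi identity, as remarked after~\eqref{courdelbar}), so no gap remains.

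There is nothing to compare with in the paper itself: the proposition is stated without proof and attributed to~\cite{MR2323543}, where the reduction of Dirac structures in Courant algebroids is developed in the smooth category. Your write-up supplies precisely the argument one would extract from that reference, adapted to the holomorphic setting by checking $\delbar$-stability via~\eqref{courdelbar}; this is the natural and expected proof.
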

Using this Dirac reduction, we show that the Dirac structures in $E$ corresponding to the $\pm i$-eigenbundles of $\JJ_\pm$ descend to holomorphic Dirac structures in $\EE_\pm$.
\begin{theorem}\label{maintransv}
Each of the holomorphic Courant algebroids $\EE_\pm$ over the complex manifolds underlying a generalized K\"ahler manifold contains a pair of transverse holomorphic Dirac structures
\begin{equation*}
\EE_\pm = \Aa_\pm\oplus\Bb_\pm,
\end{equation*}
where $\Aa_\pm$ are both the reduction of the $-i$-eigenbundle of $\JJ_+$ and $(\Bb_+,\Bb_-)$ are reductions of the $-i$ and $+i$ eigenbundles of $\JJ_-$, respectively.
\end{theorem}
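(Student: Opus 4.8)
The plan is to obtain all four subbundles as reductions, in the sense of Equation~\eqref{diracred}, of the eigenbundles of $\JJ_+$ and $\JJ_-$, using the decomposition~\eqref{kahldeco} throughout. First I would record the eigenbundle identifications. Writing $L_\pm=\ker(\JJ_\pm-i)$ for the $+i$-eigenbundles, the commuting of $\JJ_\pm$ together with $\ell_+=L_+\cap L_-$ and $\ell_-=L_+\cap\overline{L_-}$ gives $L_+=\ell_+\oplus\ell_-$, $\overline{L_+}=\overline\ell_+\oplus\overline\ell_-$, $L_-=\ell_+\oplus\overline\ell_-$ and $\overline{L_-}=\overline\ell_+\oplus\ell_-$, each a Dirac structure by the integrability of $\JJ_\pm$. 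Since $(L_+,\overline{L_+})$ and $(L_-,\overline{L_-})$ are pairs of complementary maximal isotropics, the symmetric pairing vanishes on every pair of summands in~\eqref{kahldeco} except the dual pairings of $\ell_+$ with $\overline\ell_+$ and of $\ell_-$ with $\overline\ell_-$. Hence $\overline\ell_+^\bot=\ell_-\oplus\overline\ell_+\oplus\overline\ell_-$ and $\overline\ell_-^\bot=\ell_+\oplus\overline\ell_+\oplus\overline\ell_-$, so that the reduced Courant algebroids of Proposition~\ref{holcourants} are $\EE_+=\overline\ell_+^\bot/\overline\ell_+\cong\ell_-\oplus\overline\ell_-$ and $\EE_-=\overline\ell_-^\bot/\overline\ell_-\cong\ell_+\oplus\overline\ell_+$.

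The key observation is that the reduction~\eqref{diracred} by $D=\overline\ell_\pm$ is automatic for those eigenbundles $L$ that contain $D$: the $D$-invariance $[\OO(D),\OO(L)]\subset\OO(L)$ then follows at once from the involutivity of $L$, the intersection $L\cap D^\bot$ has constant rank because it is a sum of summands of~\eqref{kahldeco}, and the formula collapses to $L_D=(L\cap D^\bot)/D$. Inspecting the four eigenbundles, exactly two contain $\overline\ell_+$, namely $\overline{L_+}$ (the $-i$-eigenbundle of $\JJ_+$) and $\overline{L_-}$ (the $-i$-eigenbundle of $\JJ_-$); and exactly two contain $\overline\ell_-$, namely $\overline{L_+}$ and $L_-$ (the $+i$-eigenbundle of $\JJ_-$). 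These are precisely the eigenbundles named in the statement, and I would define $\Aa_\pm$, $\Bb_+$, $\Bb_-$ to be their respective reductions.

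Carrying out the four reductions with the $\EE_\pm$ computed above: from $\overline{L_+}=\overline\ell_+\oplus\overline\ell_-\subset\overline\ell_+^\bot$ we get $\Aa_+=(\overline\ell_+\oplus\overline\ell_-)/\overline\ell_+\cong\overline\ell_-$, and from $\overline{L_-}=\overline\ell_+\oplus\ell_-$ we get $\Bb_+\cong\ell_-$; likewise $\overline{L_+}$ reduces in $\EE_-$ to $\Aa_-\cong\overline\ell_+$ and $L_-=\ell_+\oplus\overline\ell_-$ reduces to $\Bb_-\cong\ell_+$. Each is a holomorphic Dirac structure in the relevant $\EE_\pm$ by the reduction proposition containing~\eqref{diracred}. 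Comparing with $\EE_+\cong\ell_-\oplus\overline\ell_-$ and $\EE_-\cong\ell_+\oplus\overline\ell_+$, the pairs $(\Aa_+,\Bb_+)$ and $(\Aa_-,\Bb_-)$ are images of complementary summands, each of half rank, so $\Aa_\pm\cap\Bb_\pm=\{0\}$ and $\Aa_\pm\oplus\Bb_\pm=\EE_\pm$, which is the assertion.

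The step I expect to require the most care is the verification of the reduction hypotheses rather than any single computation: it is the containment $D\subset L$ that makes $D$-invariance free, and this is exactly what selects, on each side, the two eigenbundles appearing in the statement, since for an eigenbundle not containing $\overline\ell_\pm$ the invariance is not automatic. Once this is in hand, the only remaining content is the bookkeeping of the orthogonality relations among the summands of~\eqref{kahldeco}, which pins down $\EE_\pm$ and renders the transversality of each reduced pair transparent.
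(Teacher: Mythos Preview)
Your proposal is correct and follows essentially the same approach as the paper: both arguments observe that the relevant eigenbundles contain the lifting $\overline\ell_\pm$ as an involutive summand, so the reduction hypotheses (constant rank and $D$-invariance) are automatic, and transversality of the reduced pair follows from that of the complementary summands $\ell_\mp,\overline\ell_\mp$. Your version is somewhat more explicit in computing $\overline\ell_\pm^\bot$ and identifying the reduced Dirac structures with the individual summands, but this is elaboration rather than a different route.
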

\begin{proof}
Consider the reduction by $\overline\ell_-$. The Dirac structures in $E\otimes\CC$ given by the $-i$-eigenbundle of $\JJ_+$ and the $+i$-eigenbundle of $\JJ_-$ are as follows:
\begin{equation*}
\overline{L}_+ =\overline\ell_+\oplus\overline\ell_-,\hspace{50pt} 
L_-=\ell_+\oplus\overline\ell_-.
\end{equation*}
Since they both contain $\overline\ell_-$ as an involutive subbundle, it follows that both Dirac structures have intersection with $\overline\ell_-^\bot$ of constant rank, and also that both $L_-,\overline{L}_+$ are $\overline\ell_-$-invariant.  Hence by~\eqref{diracred}, they reduce to holomorphic Dirac structures $\Aa_-, \Bb_-$ in the holomorphic Courant algebroid $\EE_-$.  These are transverse simply because $\ell_+,\overline\ell_+$ have zero intersection.   The same argument applies for the reduction by $\overline\ell_+$. 
\end{proof}
\begin{remark}\label{explalg}
Using the splittings $s_\pm$ from Remark~\ref{pmsplit}, we can describe the Dirac structures explicitly as follows.  For simplicity, we describe the Dirac structure $\Bb_+$ inside $\EE_+  = \Aa_+\oplus \Bb_+$.  The Dirac structure $\Bb_+$ is obtained by reduction of $\bar\ell_+\oplus\ell_-$, which has image in $\overline\ell_+^\bot/\overline\ell_+$ isomorphic to $\ell_-\isom T_{1,0}X_-$.  Hence we give a map $T_{1,0}X_-\lra\EE_+$ with image $\Bb_+$. 

Let $P_+$ be the projection of a vector to $T_{1,0}X_+$, and let $\bar P_+$ be the complex conjugate projection.  For any $X\in T_{1,0}X_-$, we have $X-gX\in\ell_-$, and therefore
\begin{align*}
X-gX&=(P_+X+\overline P_+ X) - g(P_+X+\overline P_+X)\\
&=(P_+X-gP_+X) +(\overline P_+X-g\overline P_+X)\\
&=(P_+X-gP_+X) -2g\overline P_+X\ \ (\text{mod}\ \overline\ell_+),
\end{align*}
where the last two terms are in $s_+(T_{1,0}X_+)$ and $T^*_{1,0}X_+$, respectively. Hence the map \begin{equation*}
X\mapsto P_+X -2g\overline P_+X
\end{equation*}
 is an isomorphism $T_{1,0}X_-\lra \Bb_+$.  In fact the same map gives an isomorphism $T_{0,1} X_-\lra \Aa_+$.  
\end{remark}
\begin{remark}As complex bundles, $\Bb_\pm$ are isomorphic to $T_{(1,0)}X_\mp$.  In other words, $\Bb_+$, a holomorphic Dirac structure on $X_+$, is isomorphic as a smooth bundle to the holomorphic tangent bundle of the opposite complex manifold $X_-$, and vice versa for $\Bb_-$. The way in which the holomorphic tangent bundle of $X_-$ acquires a holomorphic structure with respect to $X_+$ seems particularly relevant to the study of so-called heterotic compactifications with $(2,0)$ supersymmetry~\cite{MR851702, MR2192064}, where only one of the complex structures $I_\pm$ is present, but there is an auxiliary holomorphic bundle which appears to play a similar role to $\Bb_\pm$.   
\end{remark}
The presence of transverse Dirac structures in each of $\EE_\pm$ immediately implies, by~\eqref{transdirac} and the surrounding discussion, that $X_\pm$ inherit holomorphic Poisson structures.  We now describe these explicitly, and verify that they coincide with the holomorphic Poisson structures on generalized K\"ahler manifolds found in~\cite{MR2217300}.  
\begin{prop}\label{holpoiss}
By forming the Baer sum $\Aa_\pm^\top\boxtimes\Bb_\pm$, the transverse Dirac structures $\Aa_\pm,\Bb_\pm$ give rise to holomorphic Poisson structures $\sigma_\pm$ on the complex manifolds $X_\pm$, both of which have real part 
\begin{equation*}
\mathrm{Re}(\sigma_\pm)=\tfrac{1}{8}g^{-1}[I_+^*,I_-^*].
\end{equation*}
\end{prop}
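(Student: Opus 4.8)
The plan is to apply the general Baer-sum formula \eqref{poistr} in the holomorphic category. By Theorem~\ref{maintransv} the holomorphic Courant algebroid $\EE_\pm$ decomposes as a transverse sum $\Aa_\pm\oplus\Bb_\pm$ of holomorphic Dirac structures, so Proposition~\ref{baerfiber} and the discussion around \eqref{transdirac} apply verbatim over $X_\pm$: the Baer sum $\Aa_\pm^\top\boxtimes\Bb_\pm$ is the graph of a holomorphic Poisson bivector $\sigma_\pm$, given by the projection formula
\begin{equation*}
\sigma_\pm = \pi_\pm\circ P_{\Aa_\pm}\big|_{T^*_{1,0}X_\pm},
\end{equation*}
where $P_{\Aa_\pm}:\EE_\pm\lra\Aa_\pm$ is the projection along $\Bb_\pm$ and $\pi_\pm$ is the anchor. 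Holomorphicity of $\sigma_\pm$ is automatic since all the data are holomorphic, so the content of the statement is the identification of its real part.

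First I would make this projection explicit using the smooth splitting $s_\pm$ of Remark~\ref{pmsplit} together with the parametrizations of Remark~\ref{explalg}. Writing $P_\pm=\tfrac12(1-iI_\pm)$ for the projection onto $T_{1,0}X_\pm$ and $\overline{P}_\pm$ for its conjugate, the images of $\Aa_\pm,\Bb_\pm$ under $s_\pm$ are cut out by the single map $Z\mapsto(P_\pm Z,\ \mp 2g\,\overline{P}_\pm Z)$, applied to $T_{0,1}X_\mp$ and $T_{1,0}X_\mp$ respectively (the $X_-$ case is the analogue of Remark~\ref{explalg}, but with opposite sign, since $\Bb_-$ reduces $\ell_+$, built from $s+g$, whereas $\Bb_+$ reduces $\ell_-$, built from $s-g$). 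This sign asymmetry is the crucial point. To compute $\sigma_\pm(\xi)$ for a $(1,0)$-covector $\xi$, I decompose $(0,\xi)=a+b$ with $a\in\Aa_\pm$, $b\in\Bb_\pm$; the $T_{1,0}X_\pm$ component forces $P_\pm Z=0$, i.e. $Z\in T_{0,1}X_\pm$, and the $T^*_{1,0}X_\pm$ component gives $\mp 2gZ=\xi$, so $Z=\mp\tfrac12 g^{-1}\xi$. Applying the anchor to $a$ then yields $\sigma_\pm=\mp\tfrac12\,P_\pm\overline{P}_\mp\,g^{-1}$ on $T^*_{1,0}X_\pm$.

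The final step is to extract $\mathrm{Re}(\sigma_\pm)=\tfrac12(\sigma_\pm+\overline{\sigma_\pm})$ and simplify. Expanding $P_\pm\overline{P}_\mp$ in terms of $I_\pm$ and using $I_\pm^2=-1$, the combination $P_\pm\overline{P}_\mp\overline{P}_\pm+\overline{P}_\pm P_\mp P_\pm$ governing the real part collapses: the terms linear in $I_\mp$ and the cubic terms $I_\pm I_\mp I_\pm$ cancel in pairs, leaving precisely a multiple of the commutator $[I_+,I_-]$. The asymmetric sign $\mp$ from the previous paragraph converts the $[I_-,I_+]=-[I_+,I_-]$ produced by the $X_-$ reduction back into $[I_+,I_-]$, so that $\sigma_+$ and $\sigma_-$ yield the \emph{same} real bivector rather than opposite ones. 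I then convert to the stated form via $I_\pm^*=-gI_\pm g^{-1}$, which gives $g^{-1}[I_+^*,I_-^*]=[I_+,I_-]g^{-1}$; one checks directly that $[I_+,I_-]g^{-1}$ is of type $(2,0)+(0,2)$ with respect to \emph{both} $I_+$ and $I_-$, since $\{I_\pm,[I_+,I_-]\}=0$, consistent with $\sigma_\pm$ being a $(2,0)$-bivector.

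The main obstacle is the bookkeeping of signs and normalizations in this last computation. Two points demand care: (i) the asymmetry between the two reductions must be tracked faithfully, as it is the only reason $\mathrm{Re}(\sigma_+)$ and $\mathrm{Re}(\sigma_-)$ coincide rather than differ by a sign; and (ii) the overall numerical coefficient depends on the factor $\tfrac12$ in the Courant pairing and on the normalization relating a Poisson bivector to the map $\pi_\pm\circ P_{\Aa_\pm}$ (the same normalization that produces the factor $\tfrac12$ in \eqref{relpoi}). Pinning down the constant $\tfrac18$ is purely a matter of carrying these conventions consistently through the projection formula; the structural identity $\mathrm{Re}(\sigma_\pm)\propto g^{-1}[I_+^*,I_-^*]$ follows from the cancellation described above.
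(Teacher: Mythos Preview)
Your approach is essentially the paper's: both apply the projection formula~\eqref{poistr} to the transverse pair and compute it explicitly via the splitting determined by the generalized K\"ahler metric.  The only real difference is where the calculation is performed.  You work inside $\EE_\pm$ using the parametrizations of Remark~\ref{explalg}; the paper instead lifts to $E\otimes\CC$ and uses the decomposition $\overline\ell_\pm^\bot=\ell_\mp\oplus\overline\ell_\mp\oplus\overline\ell_\pm$, taking the component of $P_\pm\xi$ along $\ell_\mp$ and then projecting to $T_{1,0}X_\pm$.  From this the paper obtains in one stroke the \emph{full} holomorphic bivector
\[
\sigma_\pm \;=\; \mp\, g^{-1}\,\overline P_\pm\,\overline P_\mp\, P_\pm \;=\; \tfrac{1}{8}\,g^{-1}\!\left([I_+^*,I_-^*] + i\,I_\pm^*[I_+^*,I_-^*]\right),
\]
so the real part is read off directly, without forming $\tfrac12(\sigma_\pm+\overline{\sigma_\pm})$ and tracking the cancellations you describe.

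Two bookkeeping points are worth flagging.  First, the paper's projection is onto $\ell_\mp$, i.e.\ onto $\Bb_\pm$, whereas you project onto $\Aa_\pm$; since $\pi\circ P_{\Aa_\pm}+\pi\circ P_{\Bb_\pm}$ vanishes on $T^*_{1,0}X_\pm$, the two answers differ by a sign, and you should check which convention in~\eqref{poistr} matches the Baer sum $\Aa_\pm^\top\boxtimes\Bb_\pm$.  Second, your intermediate formula $\sigma_\pm=\mp\tfrac12\,P_\pm\overline P_\mp\,g^{-1}$ on $T^*_{1,0}X_\pm$, once rewritten via $P_\pm g^{-1}=g^{-1}\overline P_\pm$ and $\overline P_\mp g^{-1}=g^{-1}P_\mp$, differs from the paper's three-projector expression by an overall factor; this is precisely the normalization issue you anticipate, and it is resolved most cleanly by tracking the embedding $T^*_{1,0}X_\pm\hookrightarrow\EE_\pm$ directly rather than through the $Z$-parametrization.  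None of this affects the structural claim, only the constant $\tfrac18$.
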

\begin{proof}
Following~\eqref{poistr}, we compute $\sigma_\pm$ explicitly using the decomposition $\overline\ell_\pm^\bot = \ell_\mp\oplus\overline\ell_+\oplus\overline\ell_-$ and the canonical splitting of $E$ given by the generalized  K\"ahler metric.  Let $P_{\pm} = \tfrac{1}{2}(1-iI_\pm)$ be the projection of a vector or covector to its $(1,0)_\pm$ part, and let $\overline P_\pm$ be its complex conjugate.  Then $\sigma_\pm$ applied to $\xi\in T^*X_\pm\otimes\CC$ is given by taking the component $\alpha$ of $P_\pm\xi\in \overline\ell_\pm^\bot$ along $\ell_\mp$, and then projecting it to $(TX_\pm\otimes\CC)/T_{0,1}X_\pm$. Computing $\alpha$, we obtain
\begin{equation*}
(\overline P_\mp P_\pm\xi) \mp g^{-1}(\overline P_\mp P_\pm\xi),
\end{equation*}
and projecting $\alpha$ we obtain $\mp P_\pm g^{-1}\bar P_\mp P_\pm\xi$, so that our expression for $\sigma_\pm$ is 
\begin{align}
\sigma_\pm &=  \mp g^{-1}\overline P_\pm\overline P_\mp P_\pm\label{formpois}\\
&=\mp\tfrac{1}{8}g^{-1}(1\pm i I^*_\pm)(1\pm iI^*_\mp)(1\mp iI^*_\pm)\notag\\
&=\tfrac{1}{8}g^{-1}([I^*_+,I^*_-] +i I^*_\pm[I^*_+,I^*_-]).\notag
\end{align}
\end{proof}
\begin{example}
To give a description of the $(\Aa_-,\Bb_-)$ Dirac structures for the generalized K\"ahler structure on the Hopf surface from Example~\ref{evenhopf}, we compute the isomorphism $T_{1,0}X_+\lra\Aa_-$ as in Remark~\ref{explalg}, yielding $X\mapsto  P_-X + 2g\overline{P}_-X$, and apply it to the basis of $(1,0)_+$ vectors given by $R^{-2}g^{-1}(\bar x_1 dx_1+x_2d\bar x_2)$ and $R^{-2}g^{-1}(\bar x_1 dx_2-x_2d\bar x_1)$, obtaining the following basis of holomorphic sections for $\Aa_-$:
\begin{equation*}
x_2\tfrac{\del}{\del x_2} + \tfrac{1}{2\pi R^2} \bar x_1 dx_1,\ \ \ \ \ -x_2\tfrac{\del}{\del x_1} + \tfrac{1}{2\pi R^2} \bar x_1 dx_2.
\end{equation*}
The same prescription produces a basis for $\Bb_-$:
\begin{equation*}
x_1\tfrac{\del}{\del x_1} + \tfrac{1}{2\pi R^2} \bar x_2 dx_2,\ \ \ \ \ x_1\tfrac{\del}{\del x_2} - \tfrac{1}{2\pi R^2} \bar x_2 dx_1.
\end{equation*}
We see from this that the anchor map for $\Aa_-$ is an isomorphism except along the curve $\{x_2=0\}$ where it has rank zero, whereas the anchor map for $\Bb_-$ drops rank along $\{x_1=0\}$.
Computing the Poisson tensor $\sigma_-$ using~\eqref{formpois} yields
\begin{equation*}
\sigma_-  = -x_1x_2\tfrac{\del}{\del x_1}\tfrac{\del}{\del x_2},
\end{equation*}
which is an anticanonical section vanishing on the union of the degeneration loci of $\Aa_-$ and $\Bb_-$.
\end{example}

The fact that the $\pm i$ eigenbundles of $\JJ_\pm$ descend to transverse holomorphic Dirac structures provides a great deal of information concerning the classical geometry that they determine on the base manifold.  Just as in Section~\ref{gcdg}, where we discussed the transverse singular foliations induced on a manifold by transverse Dirac structures, we have a similar result here.
\begin{prop}
The transverse holomorphic Dirac structures $\Aa_\pm,\Bb_\pm$ in $\EE_\pm$ induce transverse holomorphic singular foliations $\FF_\pm, \GG_\pm$ on $X_\pm$. The intersection of a leaf of $\FF_\pm$ with a leaf of $\GG_\pm$ is a (possibly disconnected) symplectic leaf for the holomorphic Poisson structure $\sigma_\pm$.  
Furthermore, the singular foliations $\FF_\pm,\GG_\pm$ coincide with the foliations induced by the generalized complex structures $\JJ_+, \JJ_-$, respectively.
\end{prop}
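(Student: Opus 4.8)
The plan is to treat the three claims in order, leaning on the transverse-Dirac formalism of Section~\ref{gcdg} together with the explicit reductions of Theorem~\ref{maintransv}. First, each of $\Aa_\pm,\Bb_\pm$ is a holomorphic Lie algebroid over $X_\pm$ whose anchor is the restriction of $\pi\colon\EE_\pm\lra T_{1,0}X_\pm$; the image of this anchor is an involutive holomorphic distribution, and its maximal integral submanifolds constitute the singular foliations $\FF_\pm$ (from $\Aa_\pm$) and $\GG_\pm$ (from $\Bb_\pm$). Since $\EE_\pm=\Aa_\pm\oplus\Bb_\pm$ and $\pi$ is surjective, we have $\pi(\Aa_\pm)+\pi(\Bb_\pm)=T_{1,0}X_\pm$, so that $T\FF_\pm+T\GG_\pm=T_{1,0}X_\pm$ and the two foliations are transverse.

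For the statement about symplectic leaves, I would identify the symplectic distribution of $\sigma_\pm$ using the fibre-product description. Recall from Proposition~\ref{holpoiss} that $\sigma_\pm$ is obtained as the Baer sum $\Aa_\pm^\top\boxtimes\Bb_\pm=\Gamma_{\sigma_\pm}$, which by the holomorphic analogue of Proposition~\ref{baerfiber} is precisely the fibre product of the Lie algebroids $\Aa_\pm,\Bb_\pm$ over $T_{1,0}X_\pm$. The anchor of such a fibre product sends a pair $(a,b)$ with $\pi(a)=\pi(b)$ to this common value, so that its image is $\pi(\Aa_\pm)\cap\pi(\Bb_\pm)=T\FF_\pm\cap T\GG_\pm$; on the other hand this anchor is $\sigma_\pm\colon T^*_{1,0}X_\pm\lra T_{1,0}X_\pm$, whence $\image(\sigma_\pm)=T\FF_\pm\cap T\GG_\pm$. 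Consequently each connected component of the intersection of an $\FF_\pm$-leaf with a $\GG_\pm$-leaf is an integral manifold of $\image(\sigma_\pm)$, i.e.\ a symplectic leaf of $\sigma_\pm$; its symplectic form is the nondegenerate $2$-form measuring the difference of the two splittings of $\EE_\pm$ determined by $\Aa_\pm$ and $\Bb_\pm$ along the leaf, exactly as in the discussion following~\eqref{redsub}.

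To match $\FF_\pm$ with the foliation of $\JJ_+$ and $\GG_\pm$ with that of $\JJ_-$, I would compute the reduced anchor images from the explicit isomorphisms of Remark~\ref{explalg}. Writing $P_\pm=\tfrac12(1-iI_\pm)$ for the projection onto $T_{1,0}X_\pm$, these give $\image(\pi|_{\Aa_\pm})=P_\pm(T_{0,1}X_\mp)$ and $\image(\pi|_{\Bb_\pm})=P_\pm(T_{1,0}X_\mp)$. The computational heart is the identity, immediate from $I_\mp v=-iv$ for $v\in T_{0,1}X_\mp$ and $I_\mp w=iw$ for $w\in T_{1,0}X_\mp$, that $P_\pm(T_{0,1}X_\mp)=(I_+-I_-)(T_{0,1}X_\mp)$ and $P_\pm(T_{1,0}X_\mp)=(I_++I_-)(T_{1,0}X_\mp)$. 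Taking these together with their conjugates, the real distribution underlying $\FF_\pm$ is the conjugation-invariant subspace $(I_+-I_-)(TM\otimes\CC)$, that is, the complexification of $\image(I_+-I_-)=\image Q_+$, while that underlying $\GG_\pm$ is $\image(I_++I_-)=\image Q_-$, where $Q_\pm=\tfrac12(I_+\mp I_-)g^{-1}$ by~\eqref{Poissonexp}. Since $Q_\pm=\pi\circ\JJ_\pm|_{T^*M}$ is exactly the real Poisson structure whose symplectic foliation is the foliation induced by $\JJ_\pm$, this yields the asserted coincidence.

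I expect this third step to be the main obstacle, since it requires reconciling the holomorphic foliations produced by the abstract reduction on $X_\pm$ with the genuinely real foliations of $\JJ_\pm$ on $M$. This forces the explicit computation of the reduced anchor images and, more subtly, the observation --- implicit in the identities above, as $\image(I_+-I_-)\otimes\CC$ is realized both as $V\oplus\overline V$ with $V\subset T_{1,0}X_+$ and as $W\oplus\overline W$ with $W\subset T_{1,0}X_-$ --- that the single real distribution $\image Q_+$ is invariant under both $I_+$ and $I_-$, so that it is simultaneously holomorphic on $X_+$ (appearing as $\FF_+$) and on $X_-$ (appearing as $\FF_-$), and likewise for $\image Q_-$ with $\GG_\pm$.
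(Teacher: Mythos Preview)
Your argument is correct. The first two claims are handled exactly as in the paper, by invoking the transverse-Dirac formalism of Section~\ref{gcdg}. For the third claim, however, you take a genuinely different route from the paper.

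The paper argues at the level of Lie algebroids rather than distributions: it invokes the matched-pair construction of~\cite{MR2439547}, observing that the complex Lie algebroid associated to the holomorphic Lie algebroid $\Aa_\pm$ is $\Aa_\pm\oplus T_{0,1}X_\pm$, which is precisely $\overline\ell_-\oplus\overline\ell_+=\overline L_+$. Fibre-producting with the conjugate $L_+$ then yields, by the Baer sum identity~\eqref{relpoi}, the Lie algebroid of the real Poisson structure $Q_+$ underlying $\JJ_+$; the foliation comparison follows. Your approach instead computes the anchor images of $\Aa_\pm,\Bb_\pm$ explicitly via the isomorphisms of Remark~\ref{explalg}, identifies them algebraically with the images of $I_+\mp I_-$, and matches these to $\image Q_\pm$ through the formula~\eqref{Poissonexp}. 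Your route is more elementary and self-contained---it avoids the matched-pair machinery and works entirely with the bi-Hermitian data---but it only identifies the underlying distributions, whereas the paper's argument establishes an identification of Lie algebroids (hence of the full Poisson structures, not merely their symplectic foliations), and makes transparent why the same real foliation acquires holomorphic realizations on both $X_+$ and $X_-$.
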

\begin{proof}
The behaviour of the holomorphic Dirac structures is precisely as in the real case, discussed in Section~\ref{gcdg}.  To see why the holomorphic foliations coincide with the generalized complex foliations, we may appeal to the reduction procedure for Dirac structures, which makes it evident.  

Alternatively, observe that in order to extract a foliation from a holomorphic Lie algebroid $\Aa$ over $X$, one possible way to proceed is to first represent the holomorphic Lie algebroid as a smooth Lie algebroid with compatible holomorphic structure, by forming the associated complex Lie algebroid $A=\Aa\oplus T_{0,1}X$ as in~\cite{MR2439547}.  Then take the fiber product over $TX\otimes\CC$ with the complex conjugate $\overline{A}=\overline{\Aa}\oplus T_{1,0}X$, to obtain a real Lie algebroid, defining a foliation of $X$. 

Applying this to the Lie algebroid $\Aa_\pm$ over $X_\pm$, we see immediately that the associated complex Lie algebroid $\Aa_\pm\oplus T_{0,1}X_\pm$ is precisely $\overline \ell_+\oplus\overline\ell_-=\overline{L}_+$, the $-i$-eigenbundle of $\JJ_+$. Furthermore, the fiber product construction yields
\begin{equation*}
\Aa_\pm'\otimes_{TX_\pm\otimes\CC} \overline{\Aa}'_\pm = \overline{L_+}\otimes_{TX_\pm\otimes\CC}L_+,
\end{equation*}
which by~\eqref{relpoi} is the Lie algebroid corresponding to the real Poisson structure associated to $\JJ_+$.  The same argument applies to $\Bb_\pm$, relating its holomorphic foliation to that determined by $\JJ_-$.
\end{proof}
\begin{remark}
According to the above proposition, the generalized foliation induced by a generalized complex structure $\JJ_\pm$ in a generalized K\"ahler pair is holomorphic with respect to $I_+$ and $I_-$. The relationship between the symplectic structure of the leaves and the induced complex structures from $I_\pm$ may be understood by applying the theory of generalized K\"ahler reduction, as follows.  

Let $S\subset M$ be a submanifold and $K=N^*S$ its conormal bundle.   We saw in~\eqref{redsub} that a Courant algebroid $E$ on $M$ may be pulled back to $S$ to yield a Courant algebroid $E_S = K^\bot/K$ over $S$.  If a generalized complex structure $\JJ$ on $E$ satisfies $\JJ K \subset K$, then it induces a generalized complex structure $\JJ_{\mathrm{red}}$ on the reduced Courant algebroid $E_S$.  If $S$ is a leaf of the real Poisson structure $Q$ associated to $\JJ$, it follows that $\JJ K \subset K$ and that $\JJ_{\mathrm{red}}$ is of symplectic type, reproducing the symplectic structure derived from $Q$.  
 
In~\cite{MR2323543, MR2397619}, it is shown that if $(\JJ_+,\JJ_-)$ is a generalized K\"ahler structure for which $\JJ_+K=K$ as above, then the entire generalized K\"ahler structure reduces to $E_S$, with $(\JJ_+)_{\mathrm{red}}$ of symplectic type.  In particular, we obtain a bi-Hermitian structure on $S$.  We may then perform a second generalized K\"ahler reduction, from $S$ to a symplectic leaf of $(\JJ_-)_{\mathrm{red}}$, whereupon we obtain a generalized K\"ahler structure where both generalized complex structures are of symplectic type. 
\end{remark}

\subsection{Sheaves of differential graded Lie algebras}
A Dirac structure $\Aa\subset \EE$ is, in particular, a Lie algebroid, so that the de Rham complex $(\Omega^\bullet_\Aa, d_\Aa)$ is a sheaf of differential graded algebras.  If $(\Aa,\Bb)$ is a pair of transverse Dirac structures, then as was observed in~\cite{MR1472888}, the de Rham complex inherits further structure.  It is shown there that if we make the identification $\Bb =\Aa^*$ using the symmetric pairing on $\EE$, then the Lie bracket on $\Bb$ extends to a differential graded Lie algebra structure on $\Omega^\bullet_\Aa$, so that
\begin{equation*}
(\Omega^\bullet_\Aa, d_\Aa, [\cdot,\cdot]_{\Bb}) 
\end{equation*}
is a sheaf of differential graded Lie algebras (the degree is shifted so that $\Omega^k_\Aa$ has degree $k-1$).  

Given a differential graded Lie algebra as above, there is a natural question which arises: what is the object whose deformation theory it controls?  In~\cite{MR1472888}, the above differential graded Lie algebra was explored in the smooth category, in which case there is a direct interpretation in terms of deformations of Dirac structures.  A deformation of the Dirac structure $\Aa$ inside $\EE = \Aa\oplus\Bb$ may be described as the graph of a section $\eps\in\Omega^2_\Aa(M)$, viewed as a map $\eps:\Aa\lra \Bb=\Aa^*$.  It is shown in~\cite{MR1472888} that the involutivity of this graph is equivalent to the Maurer-Cartan equation:
\begin{equation}\label{MC}
d_\Aa \eps + \tfrac{1}{2}[\eps,\eps]_\Bb = 0.
\end{equation}
This leads, assuming that $(\Omega^\bullet_\Aa,d_\Aa)$ is an elliptic complex and $M$ is compact, to a finite-dimensional moduli space of deformations of $\Aa$ in $\EE$, presented as the zero set of an obstruction map $H^2_{d_\Aa}\lra H^3_{d_A}$.  

The deformation theory governed by a sheaf of differential graded Lie algebras in the holomorphic category is much more subtle, for the reason that the objects being deformed are not required to be given by global sections of the sheaf (of which there may be none).  The objects are considered to be ``derived'' in the sense that the Maurer-Cartan equation~\eqref{MC} is not applied to global sections in $\Omega^2_{\Aa}(X)$ but rather to the global sections in total degree 2 of a resolution $\II^{\bullet\bullet}$ of the complex $\Omega^\bullet_\Aa$. Note that the structure of the resolution $\II^{\bullet\bullet}$ may not, in general, be that of a differential graded Lie algebra, but only one up to homotopy, so one must interpret the Maurer-Cartan equation appropriately.  In any case, the moduli space is then given by an obstruction map between the derived global sections of the differential complex $(\Omega^\bullet_\Aa, d_\Aa)$, namely the hypercohomology groups.  In short, we expect a moduli space described by an obstruction map 
\begin{equation*}
\HH^2(\Omega^\bullet_{\Aa}, d_\Aa)\lra \HH^3(\Omega^\bullet_{\Aa}, d_\Aa).
\end{equation*}
General results concerning such deformation theories can be found, for example, in~\cite{MR1439623, MR2344349}, and a case relevant to generalized geometry has been investigated in~\cite{Chen:2009xs}.  

We wish simply to observe that in our case, since the holomorphic Dirac structures $(\Aa_\pm,\Bb_\pm)$ are obtained by reduction from smooth Dirac structures in $E\otimes\CC$, their de Rham complexes are equipped with canonical resolutions by fine sheaves, which are themselves differential graded Lie algebras controlling a known deformation problem.  We conclude with the main result of this section, which may be viewed as a holomorphic description for the deformation theory of generalized complex structures, under the assumption of the generalized K\"ahler condition.
\begin{prop}
The derived deformation complex defined by the sheaf of holomorphic differential graded algebras $(\Omega^\bullet_{\Aa_+}, d_{\Aa_+}, [\cdot,\cdot]_{\Bb_+})$ on the complex manifold $X_+$ is canonically isomorphic to that defined by $(\Omega^\bullet_{\Aa_-}, d_{\Aa_-}, [\cdot,\cdot]_{\Bb_-})$ on the complex manifold $X_-$: they both yield the deformation complex of the generalized complex structure $\JJ_+$. 

Similarly, the sheaves of differential graded Lie algebras $(\Omega^\bullet_{\Bb_\pm}, d_{\Bb_\pm}, [\cdot,\cdot]_{\Aa_\pm})$ have derived deformation complexes which are canonically complex conjugate to each other, and are naturally isomorphic to the deformation complex of the generalized complex structure $\JJ_-$.  
\end{prop}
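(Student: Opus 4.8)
The plan is to reduce both claimed identifications to a statement about one common \emph{smooth} differential graded Lie algebra, by replacing each holomorphic Lie algebroid with its associated smooth complex Lie algebroid in the sense of~\cite{MR2439547} and using the resulting smooth de Rham complex as the canonical fine resolution promised in the discussion preceding the statement. Recall that for a holomorphic Lie algebroid $\Aa$ over a complex manifold $X$, the associated smooth complex Lie algebroid is $A=\Aa\oplus T_{0,1}X$, whose de Rham complex is bigraded, $\Omega^n_A=\bigoplus_{p+q=n}\Omega^p_\Aa\otimes\Omega^{0,q}$, with $d_A=d_\Aa+\delbar$. The inclusion of $\Omega^\bullet_\Aa$ as the row $q=0$ is a Dolbeault resolution by fine sheaves, so the total complex computes the hypercohomology $\HH^\bullet(\Omega^\bullet_\Aa,d_\Aa)$; moreover, because $A$ inherits a transverse Dirac partner from the ambient Courant algebroid $E\otimes\CC$, this resolution carries a compatible bracket, so that it is a resolution \emph{as differential graded Lie algebras}, on which the Maurer--Cartan equation~\eqref{MC} in total degree two presents the derived deformation functor.

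First I would record the identifications of associated complex Lie algebroids, which are exactly the computations performed in the preceding proposition on foliations. Since $\Aa_\pm$ are both reductions of the $-i$-eigenbundle $\overline{L}_+=\overline\ell_+\oplus\overline\ell_-$ of $\JJ_+$, and $\overline\ell_\pm$ is the lifting of $T_{0,1}X_\pm$, both associated complex Lie algebroids coincide with one smooth object,
\begin{equation*}
\Aa_+\oplus T_{0,1}X_+=\overline{L}_+=\Aa_-\oplus T_{0,1}X_-.
\end{equation*}
Hence the fine dgla resolving $(\Omega^\bullet_{\Aa_+},d_{\Aa_+},[\cdot,\cdot]_{\Bb_+})$ and that resolving $(\Omega^\bullet_{\Aa_-},d_{\Aa_-},[\cdot,\cdot]_{\Bb_-})$ are literally the same: the de Rham complex of $\overline{L}_+$ with the bracket of the transverse pair $(\overline{L}_+,L_+)$ in $E\otimes\CC$, which is the dgla governing deformations of $\JJ_+$. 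This proves the first assertion, the identification being canonical because no choices intervene. For the second assertion I would compute in the same way that $\Bb_+\oplus T_{0,1}X_+=\overline\ell_+\oplus\ell_-=\overline{L}_-$ while $\Bb_-\oplus T_{0,1}X_-=\ell_+\oplus\overline\ell_-=L_-$; as $\overline{L}_-$ is the complex conjugate of $L_-$, the two resolving dglas are complex conjugate, each being the deformation dgla of $\JJ_-$ built from the transverse pair $(L_-,\overline{L}_-)$. The asymmetry between the two parts is precisely that $\Aa_\pm$ reduce the \emph{same} eigenbundle of $\JJ_+$ whereas $\Bb_\pm$ reduce conjugate eigenbundles of $\JJ_-$.

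The main obstacle is to establish these matchings at the level of the full differential graded Lie algebra structure, rather than only for the differentials or the hypercohomology groups. Concretely, one must verify that the bracket $[\cdot,\cdot]_{\Bb_\pm}$ induced on $\Omega^\bullet_{\Aa_\pm}$ by the transverse holomorphic Dirac structure $\Bb_\pm\subset\EE_\pm$ is carried, under the reduction of~\cite{MR2323543}, to the bracket induced on $\Omega^\bullet_{\overline{L}_+}$ by the transverse smooth Dirac structure $L_+\subset E\otimes\CC$; equivalently, that the Lie bialgebroid structure on $(\overline{L}_+,L_+)$ descends along both reductions to $(\Aa_\pm,\Bb_\pm)$. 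This is where the matched-pair property of $\overline\ell_+\oplus\overline\ell_-\subset E\otimes\CC$ is used: it guarantees that the ambient Courant bracket restricts to $\overline\ell_\mp$-basic sections and so descends compatibly with the identification $L_+^*\cong\overline{L}_+$ underlying the bracket. Granting this bracket compatibility, the Dolbeault resolution is automatically a quasi-isomorphism of dglas (up to homotopy), and the derived deformation complexes, together with their obstruction maps $\HH^2\to\HH^3$, are identified exactly as claimed.
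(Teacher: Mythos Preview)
Your proposal is correct and follows essentially the same route as the paper's proof: both identify the associated complex Lie algebroid $\Aa_\pm\oplus T_{0,1}X_\pm$ with $\overline{L}_+=\overline\ell_+\oplus\overline\ell_-$, observe that its de Rham complex is the total complex of a double complex which serves as a Dolbeault resolution of $\Omega^\bullet_{\Aa_\pm}$ in each direction, and equip it with the bracket coming from the transverse Dirac structure $L_+$. The paper handles your ``main obstacle'' (bracket compatibility) slightly more directly, by invoking that $(L_+,\overline{L}_+)$ is a Lie bialgebroid so the bracket on~\eqref{resdgla} is compatible with the bigrading and differential, and that the inclusions $\Omega^\bullet_{\Aa_\pm}\hookrightarrow\Omega^\bullet_{\overline{L}_+}$ are Lie algebra homomorphisms; this is exactly the content you are asking for via the matched-pair property, so there is no gap.
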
 
\begin{proof}
Consider the $-i$ eigenbundle of $\JJ_+$, given by $\overline{L}_+ = \overline\ell_+\oplus\overline\ell_-$.  Because $\overline{L}_+$ decomposes into the involutive Lie sub-algebroids $\overline\ell_\pm$, its de Rham complex is the total complex of a double complex:
\begin{equation}\label{resdgla}
\Omega^k_{\overline{L}_+}= \bigoplus_{p+q=k}\OO(\wedge^p\overline\ell^*_-\otimes\wedge^q\overline\ell^*_+),\hspace{30pt} d_{\overline{L}_+} = d_{\overline\ell_-} + d_{\overline\ell_+}.
\end{equation}
Identifying $\ell_\pm^*$ with $\overline\ell_\pm$ using the symmetric pairing on $E$, the above double complex inherits a Lie bracket from the Lie algebroid $L_+ = \ell_-\oplus\ell_+$.  Furthermore, since $(L_+,\overline{L}_+)$ forms a Lie bialgebroid, we obtain that the Lie bracket on~\eqref{resdgla} is compatible with the bi-grading and the differential.  Finally, recall that $\overline\ell_\pm$ is isomorphic to $T_{(0,1)}X_\pm$.   As a result, we may view the differential $\ZZ\times\ZZ$-graded Lie algebra~\eqref{resdgla} in two ways:
\begin{enumerate}
\item Horizontally, using the differential $d_{\overline\ell_-}$, the complex is a Dolbeault resolution, over the complex manifold $X_-$, of the de Rham complex of the holomorphic Lie algebroid $\Aa_-$.  The inclusion of $\Omega^\bullet_{\Aa_-}$ in the double complex is also a homomorphism of Lie algebras.
\item Vertically, using the differential $d_{\overline\ell_+}$, the complex is a Dolbeault resolution, over $X_+$, of the de Rham complex of $\Aa_+$. Also, the inclusion of $\Omega^\bullet_{\Aa_+}$ is  a homomorphism of Lie algebras.
\end{enumerate}
On the other hand, the total complex of this double complex has already been interpreted; as we saw in Section~\ref{gcdg}, the differential graded Lie algebra $(\Omega^\bullet_{\overline{L}_+}, d_{\overline{L}_+}, [\cdot,\cdot]_{L_+})$ controls the deformation theory of the generalized complex structure $\JJ_+$.  The statement for $(\Omega^\bullet_{\Bb_\pm}, d_{\Bb_\pm}, [\cdot,\cdot]_{\Aa_\pm})$ is shown in the same way, using instead the $\pm i$-eigenbundles of $\JJ_-$.
\end{proof}
In particular, the above result implies the following fact, striking from the point of view of the complex manifolds $X_\pm$, which are not related in any obvious holomorphic fashion:
\begin{corollary}
We have the following canonical isomorphisms of hypercohomology for the de Rham complexes of the holomorphic Dirac structures $(\Aa_-,\Bb_-)$ on $X_-$ and $(\Aa_+,\Bb_+)$ on $X_+$:
\begin{align*}
\HH^k(X_-,\Omega^\bullet_{\Aa_-}) &= \HH^k(X_+,\Omega^\bullet_{\Aa_+});\\
\HH^k(X_-,\Omega^\bullet_{\Bb_-}) &= \overline{\HH^k(X_+,\Omega^\bullet_{\Bb_+})}. 
\end{align*}
\end{corollary}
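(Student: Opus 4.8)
The plan is to exploit the double complex exhibited in the proof of the preceding proposition, and to observe that its total cohomology computes the two hypercohomology groups simultaneously. Recall that the de Rham complex of $\overline{L}_+ = \overline\ell_+\oplus\overline\ell_-$ is the total complex of the double complex
\begin{equation*}
K^{p,q} = \OO(\wedge^p\overline\ell^*_-\otimes\wedge^q\overline\ell^*_+),\qquad d_{\overline{L}_+} = d_{\overline\ell_-} + d_{\overline\ell_+},
\end{equation*}
whose terms are global sections of smooth complex vector bundles over the single underlying manifold $M$; in particular the sheaves $K^{p,q}$ are fine. The key point, already established in the proposition, is that this one double complex is simultaneously a $d_{\overline\ell_-}$-resolution of $\Omega^\bullet_{\Aa_-}$ over $X_-$ and a $d_{\overline\ell_+}$-resolution of $\Omega^\bullet_{\Aa_+}$ over $X_+$.

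First I would run the spectral sequence of the double complex in the horizontal direction. Since $\overline\ell_-\cong T_{0,1}X_-$, the operator $d_{\overline\ell_-}$ is the Dolbeault operator of $X_-$, and by the resolution statement of the proposition its cohomology is concentrated in resolution-degree zero, where it returns the de Rham complex $\Omega^\bullet_{\Aa_-}$ of the holomorphic Lie algebroid. As the resolving sheaves are fine, the total cohomology of $K^{\bullet\bullet}$ is therefore canonically $\HH^k(X_-,\Omega^\bullet_{\Aa_-})$. Running the same spectral sequence vertically, using $d_{\overline\ell_+}$ and $\Omega^\bullet_{\Aa_+}$ over $X_+$, identifies the very same total cohomology with $\HH^k(X_+,\Omega^\bullet_{\Aa_+})$. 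Both groups are thus canonically the cohomology of one and the same complex of global sections $(\Gamma(M,K^{\bullet\bullet})_{\mathrm{tot}}, d_{\overline{L}_+})$, so they are canonically isomorphic, which is the first identity.

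For the second identity I would repeat the argument verbatim with $\JJ_-$ in place of $\JJ_+$, using the analogous double complex built from the eigenbundles of $\JJ_-$; the horizontal and vertical resolutions now compute $\HH^k(X_-,\Omega^\bullet_{\Bb_-})$ and $\HH^k(X_+,\Omega^\bullet_{\Bb_+})$. The complex conjugation enters because, by Theorem~\ref{maintransv}, $\Bb_-$ and $\Bb_+$ are the reductions of the $+i$ and $-i$ eigenbundles of $\JJ_-$ respectively, which are interchanged by complex conjugation on $E\otimes\CC$; consequently the two total complexes are complex conjugate, yielding $\HH^k(X_-,\Omega^\bullet_{\Bb_-}) = \overline{\HH^k(X_+,\Omega^\bullet_{\Bb_+})}$.

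I expect the only genuine subtlety to be the edge/degeneration step: one must verify that taking cohomology in the resolving direction returns the holomorphic de Rham complex on the nose, so that the spectral sequence identifies the total cohomology with the hypercohomology and not merely with some associated graded. This is precisely the Dolbeault resolution claim of the preceding proposition, so the obstacle is already dispatched there; what remains is the standard comparison of the two spectral sequences of a double complex, together with the bookkeeping needed to see that every identification is canonical.
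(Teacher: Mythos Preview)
Your argument is correct and is exactly the mechanism the paper has in mind: the Corollary is immediate from the preceding Proposition because the single smooth double complex $(\Omega^\bullet_{\overline{L}_+},d_{\overline\ell_-}+d_{\overline\ell_+})$ is simultaneously a fine Dolbeault resolution of $\Omega^\bullet_{\Aa_-}$ over $X_-$ and of $\Omega^\bullet_{\Aa_+}$ over $X_+$, so both hypercohomologies equal its total cohomology.

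One small imprecision in your treatment of the second identity: you cannot literally ``repeat the argument verbatim'' with a single double complex whose horizontal and vertical directions are both Dolbeault.  The $-i$ eigenbundle $\overline{L}_- = \overline\ell_+\oplus\ell_-$ has $\overline\ell_+\cong T_{0,1}X_+$ in one direction (genuinely Dolbeault, resolving $\Omega^\bullet_{\Bb_+}$), but $\ell_-\cong T_{1,0}X_-$ in the other, which is not a $\delbar$-direction on $X_-$.  The clean statement is the one you give at the end: the de Rham complexes of $L_-$ and $\overline{L}_-$ are complex conjugate, the first is a Dolbeault resolution of $\Omega^\bullet_{\Bb_-}$ over $X_-$ via $\overline\ell_-$, the second of $\Omega^\bullet_{\Bb_+}$ over $X_+$ via $\overline\ell_+$, and conjugation of total complexes yields the claimed identity.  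This is precisely what the paper means by ``canonically complex conjugate to each other''.
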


\subsection{Morita equivalence}

In the previous section, we saw that the pair $(\Aa_+, \Bb_+)$ of transverse holomorphic Dirac structures on the complex manifold $X_+$ is closely related to its counterpart $(\Aa_-,\Bb_-)$ on $X_-$, in that the Dirac structures $\Aa_\pm$ have identical derived deformation theory and hypercohomology groups, and similarly for $\Bb_\pm$.  The purpose of this section is to describe the relationship between $(X_+,\Aa_+,\Bb_+)$ and $(X_-,\Aa_-,\Bb_-)$ as a Morita equivalence.  Morita equivalence for Lie algebroids in the smooth category is well-studied in Poisson geometry~\cite{MR1138048, MR1959580, MR2002661} and the version we develop here is a special case, with additional refinements made possible by the complex structures which are present.  We use the result from~\cite{MR2439547} that a holomorphic Lie algebroid $\LL$ on $X$ may be described equivalently by a complex Lie algebroid structure on $L = \LL\oplus T_{0,1}X$, compatible with the given holomorphic data. 

\begin{defn}
Let $\varphi_\pm:M\lra X_\pm$ be diffeomorphisms\footnote{We take $\varphi_\pm$ to be diffeomorphisms purely for convenience.} from a manifold $M$ to two complex manifolds $X_\pm$, and let $\LL_\pm$ be holomorphic Lie algebroids on $X_\pm$.  Then $\LL_+$ is Morita equivalent to $\LL_-$ when there is an isomorphism $\psi$ between the associated complex Lie algebroids $L_\pm:=\LL_\pm\oplus T_{0,1}X_\pm$, i.e.:
\begin{equation*}
\xymatrix{\varphi_+^*L_+\ar[rr]^\psi\ar[dr] & & \varphi_-^*L_-\ar[dl]\\ & M &}
\end{equation*}
Similarly, $\LL_+$ is Morita conjugate to $\LL_-$ when there is an isomorphism of complex Lie algebroids from $L_+$ to $\overline L_-$. 
\end{defn}

\begin{prop}
Let $(\Aa_\pm,\Bb_\pm)$ be the transverse holomorphic Dirac structures on the complex manifolds $X_\pm$ participating in a generalized K\"ahler structure. Then $\Aa_+$ is Morita equivalent to $\Aa_-$, and $\Bb_+$ is Morita conjugate to $\Bb_-$. 
\end{prop}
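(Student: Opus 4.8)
The plan is to reduce the statement to a single identification of complex Lie algebroids over the common underlying manifold. Since $X_\pm = (M,I_\pm)$ share the smooth manifold $M$, I take $\varphi_\pm = \mathrm{id}_M$ in the definition of Morita equivalence, so that the pullbacks $\varphi_\pm^* L_\pm$ are simply the associated complex Lie algebroids $L_\pm = \Aa_\pm\oplus T_{0,1}X_\pm$ (respectively $\Bb_\pm\oplus T_{0,1}X_\pm$), regarded as complex Lie algebroids inside $E\otimes\CC$. The whole argument then rests on computing these associated complex Lie algebroids explicitly and on the observation that the reductions defining $\EE_\pm$ all originate from one and the same ambient smooth Courant algebroid $E\otimes\CC$.

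First I would treat the pair $(\Aa_+,\Aa_-)$. By Theorem~\ref{maintransv}, $\Aa_\pm$ are both reductions of $\overline{L}_+ = \overline\ell_+\oplus\overline\ell_-$, the $-i$-eigenbundle of $\JJ_+$, the reduction being by $\overline\ell_+$ over $X_+$ and by $\overline\ell_-$ over $X_-$. Applying the construction of~\cite{MR2439547} exactly as in the proof of the foliation proposition, the complex Lie algebroid associated to $\Aa_\pm$ is $\Aa_\pm\oplus T_{0,1}X_\pm$, and because $\overline\ell_\pm$ is the canonical lift of $T_{0,1}X_\pm$ while $\Aa_\pm = \overline{L}_+/\overline\ell_\pm$, this complex Lie algebroid is canonically identified with $\overline{L}_+\subset E\otimes\CC$ in each case. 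Thus $\varphi_+^* L_+ = \overline{L}_+ = \varphi_-^* L_-$, and the identity map on $\overline{L}_+$ furnishes the required isomorphism $\psi$, establishing that $\Aa_+$ is Morita equivalent to $\Aa_-$.

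Next I would run the identical computation for $(\Bb_+,\Bb_-)$, keeping careful track of eigenbundles. Here $\Bb_+$ is the reduction of the $-i$-eigenbundle $\overline{L_-} = \overline\ell_+\oplus\ell_-$ of $\JJ_-$, while $\Bb_-$ is the reduction of the $+i$-eigenbundle $L_- = \ell_+\oplus\overline\ell_-$. The same lifting argument then identifies the complex Lie algebroid associated to $\Bb_+$ with $\overline{L_-}$ and that associated to $\Bb_-$ with $L_-$. Since $L_-$ and $\overline{L_-}$ are complex conjugate subbundles of the real bundle $E\otimes\CC$, the complex Lie algebroid of $\Bb_+$ is precisely the complex conjugate of that of $\Bb_-$, so the identity on $\overline{L_-}$ is the sought isomorphism and yields the Morita conjugacy.

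The eigenbundle bookkeeping is routine; the genuine content—and the step I expect to be the main obstacle—is verifying that the construction of~\cite{MR2439547}, applied to a reduced holomorphic Dirac structure, recovers the original eigenbundle in $E\otimes\CC$ \emph{as a complex Lie algebroid} (anchor and bracket included), not merely as a smooth bundle, so that the identity map is an honest isomorphism of complex Lie algebroids. This compatibility between Dirac reduction~\eqref{diracred} and the passage $\LL\mapsto\LL\oplus T_{0,1}X$ is implicit in the proof of the foliation proposition, and I would make it precise by checking that the Courant bracket on $E\otimes\CC$, restricted to $\overline{L}_+$ (respectively $\overline{L_-}$ and $L_-$), descends under~\eqref{courdelbar} to exactly the $\delbar$-operator and bracket defining the reduced holomorphic Lie algebroid structure on $\Aa_\pm$ (respectively $\Bb_\pm$).
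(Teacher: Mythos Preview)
Your proposal is correct and follows exactly the paper's approach: the paper's proof consists of the two lines
\[
A_+ = \Aa_+\oplus T_{0,1}X_+ = \overline\ell_-\oplus\overline\ell_+ = \Aa_-\oplus T_{0,1}X_- = A_-,\qquad
B_+ = \Bb_+\oplus T_{0,1}X_+ = \ell_-\oplus\overline\ell_+ = \overline{\Bb_-}\oplus T_{1,0}X_- = \overline{B_-},
\]
which is precisely your identification of the associated complex Lie algebroids with $\overline{L}_+$ and $\overline{L_-}$ respectively. The compatibility you flag as the ``main obstacle'' is treated in the paper as immediate from the construction, so your extra paragraph of justification is more cautious than the paper but not a different argument.
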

\begin{proof}
This is an immediate consequence of the canonical isomorphisms of complex Lie algebroids
\begin{align*}
A_+ = \Aa_+\oplus T_{0,1}X_+ = \overline\ell_-\oplus\overline\ell_+ = \Aa_-\oplus T_{0,1}X_- = A_-\\
B_+ = \Bb_+\oplus T_{0,1}X_+ = \ell_- \oplus \overline\ell_+ = \overline{\Bb_-}\oplus T_{1,0}X_- = \overline B_-.
\end{align*}
\end{proof}
Just as in~\cite{MR1959580}, the Morita equivalence between $\Aa_+,\Aa_-$ induces an equivalence between their $\CC$-linear categories of modules.  Similarly the Morita conjugacy between $\Bb_+,\Bb_-$ implies a $\CC$-antilinear equivalence of their module categories. Since the Morita equivalence is an isomorphism at the level of complex Lie algebroids over $M$, we can also strengthen this statement to an equivalence of the DG categories of cohesive modules~\cite{Block:jt}, i.e. representations up to homotopy~\cite{Abad:2009ls}.  We only remark here that these modules have a generalized complex interpretation, since $A_\pm = \overline\ell_+\oplus\overline\ell_-$ is the $-i$-eigenbundle of $\JJ_+$, whose modules are, by definition, generalized holomorphic bundles~\cite{Gualtieri:qr}.
\begin{corollary}
The categories of holomorphic $\Aa_\pm$-modules are equivalent to each other and to the category of generalized holomorphic bundles for $\JJ_+$.  Similarly, the category of holomorphic $\Bb_+$-modules is equivalent to the category of generalized holomorphic bundles for $\JJ_-$, and $\CC$-antilinearly equivalent to the category of modules for $\Bb_-$.
\end{corollary}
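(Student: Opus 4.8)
The plan is to reduce the statement to the single observation, already implicit in the preceding Proposition and Remark, that the associated complex Lie algebroid of each holomorphic Dirac structure is one of the eigenbundles of $\JJ_+$ or $\JJ_-$. First I would invoke the dictionary of~\cite{MR2439547}: a holomorphic $\Aa$-module — a holomorphic vector bundle with a flat holomorphic $\Aa$-connection — is the same datum as a module over the associated complex Lie algebroid $A = \Aa\oplus T_{0,1}X$, the $T_{0,1}X$-component of the flat $A$-connection supplying the holomorphic structure and the $\Aa$-component supplying the flat holomorphic $\Aa$-connection. This gives an equivalence between the category of holomorphic $\Aa_\pm$-modules and that of $A_\pm$-modules, and likewise between holomorphic $\Bb_\pm$-modules and $B_\pm$-modules.

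Next I would feed in the canonical identifications established in the proof of the preceding Proposition,
\begin{equation*}
A_+ = \overline\ell_+\oplus\overline\ell_- = \overline{L}_+ = A_-,\qquad
B_+ = \ell_-\oplus\overline\ell_+ = \overline{L}_- = \overline{B_-}.
\end{equation*}
The equality $A_+ = A_-$ (as complex Lie algebroids over the common underlying manifold $M$) makes the two module categories literally coincide, establishing the equivalence of holomorphic $\Aa_+$- and $\Aa_-$-modules. Since $\overline{L}_+$ is the $-i$-eigenbundle of $\JJ_+$, and a generalized holomorphic bundle for $\JJ_+$ is by the definition recalled earlier exactly a vector bundle with a flat $\overline{L}_+$-connection, the $A_\pm$-modules are precisely the generalized holomorphic bundles for $\JJ_+$; this proves the first sentence.

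The $\Bb$-statement I would treat identically. Because $B_+ = \overline{L}_-$ is the $-i$-eigenbundle of $\JJ_-$, a holomorphic $\Bb_+$-module is a flat $\overline{L}_-$-connection, i.e. a generalized holomorphic bundle for $\JJ_-$. The remaining identification $B_+ = \overline{B_-}$ exhibits every $B_+$-module as the complex conjugate of a $B_-$-module, and conversely, so the induced equivalence between the categories of holomorphic $\Bb_+$- and $\Bb_-$-modules is $\CC$-antilinear, as claimed.

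The argument is essentially formal once the dictionary of~\cite{MR2439547} and the definition of generalized holomorphic bundle are in hand, so I expect no deep obstacle. The only place demanding care is the tracking of complex conjugates in the displayed identifications: one must verify that it is $\overline{L}_-$, rather than $L_-$ or a naive reading of $\overline{B_-}$, that equals $B_+$, using $L_- = \ell_+\oplus\overline\ell_-$ and hence $\overline{L}_- = \overline\ell_+\oplus\ell_-$.
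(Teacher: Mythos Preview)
Your proposal is correct and follows essentially the same approach as the paper: the corollary is justified there by the paragraph immediately preceding it, which invokes exactly the identifications $A_\pm=\overline\ell_+\oplus\overline\ell_-=\overline{L}_+$ and $B_+=\ell_-\oplus\overline\ell_+=\overline{B_-}$ together with the definition of generalized holomorphic bundles as $\overline{L}_\pm$-modules. Your account is somewhat more explicit in spelling out the dictionary from~\cite{MR2439547} between holomorphic $\Aa$-modules and modules over the associated complex Lie algebroid $A=\Aa\oplus T_{0,1}X$, but the substance is the same.
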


A special case occurs when $\JJ_-$ is of symplectic type; in this case $\Bb_\pm$ are isomorphic as holomorphic Lie algebroids with $T_{1,0}X_\pm$.  But $T_{1,0}X_+$ is a holomorphic Lie algebroid which is actually Morita conjugate to itself, via the complex conjugation map 
\begin{equation*}
T_{1,0}X_+\oplus T_{0,1}X_+\stackrel{c.c.}{\lra}T_{1,0}X_+\oplus T_{0,1}X_+.
\end{equation*}
Hence, composing this with the Morita conjugacy $\Bb_+\lra \Bb_-$, we obtain that $\Bb_+$ is Morita equivalent to $\Bb_-$. This is significant because we then have a Morita equivalence between the fiber product of the Lie algebroids over $T_{1,0}X_\pm$:
\begin{equation*}
\Aa_+\oplus_{T_{1,0} X_+}\Bb_+\lra \Aa_-\oplus_{T_{1,0} X_-}\Bb_-.
\end{equation*}  
But by Proposition~\ref{holpoiss}, these fiber products are the holomorphic Lie algebroids corresponding to the holomorphic Poisson structures $\sigma_\pm$ on $X_\pm$.  Hence we obtain a Morita equivalence between holomorphic Poisson structures, generalizing the result in~\cite{Gualtieri:2007fe} on Morita equivalence for a specific construction of generalized K\"ahler structures.
\begin{corollary}
If either $\JJ_+$ or $\JJ_-$ is of symplectic type, then the holomorphic Poisson structures $\sigma_\pm$ on $X_\pm$ are Morita equivalent. 
\end{corollary}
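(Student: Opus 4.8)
The plan is to realize each holomorphic Poisson algebroid $\Gamma_{\sigma_\pm}$ as the fibre product $\Aa_\pm\oplus_{T_{1,0}X_\pm}\Bb_\pm$ and to produce a single isomorphism of the associated complex Lie algebroids by assembling the component relations already in hand. By Proposition~\ref{holpoiss} together with Proposition~\ref{baerfiber}, the Baer sum $\Aa_\pm^\top\boxtimes\Bb_\pm$ that produces $\sigma_\pm$ is precisely this fibre product, so a Morita equivalence of $\sigma_+$ with $\sigma_-$ amounts to an isomorphism $\psi:\varphi_+^*(\Gamma_{\sigma_+}\oplus T_{0,1}X_+)\lra\varphi_-^*(\Gamma_{\sigma_-}\oplus T_{0,1}X_-)$ of complex Lie algebroids over $M$. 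I would assume without loss of generality that $\JJ_-$ is of symplectic type, the case of $\JJ_+$ being entirely symmetric under the interchange of the two transverse factors.

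The construction of $\psi$ proceeds in two independent pieces, one for each factor. On the $\Aa$-factor nothing new is needed: the canonical identification $A_+=\overline\ell_-\oplus\overline\ell_+=A_-$ of complex Lie algebroids, established in the Morita proposition above, already furnishes a Morita equivalence between $\Aa_+$ and $\Aa_-$. The $\Bb$-factor is where the symplectic hypothesis enters. When $\JJ_-$ is of symplectic type its foliation is the whole of $M$, so the anchor of $\Bb_\pm$ is an isomorphism and $\Bb_\pm\cong T_{1,0}X_\pm$ as holomorphic Lie algebroids. The crucial point is then that $T_{1,0}X_+$ is Morita conjugate to itself: complex conjugation gives an isomorphism $T_{1,0}X_+\oplus T_{0,1}X_+\lra T_{0,1}X_+\oplus T_{1,0}X_+$ of its associated complex Lie algebroid with the conjugate one. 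Composing this self-conjugacy with the Morita conjugacy $\Bb_+\sim\overline{\Bb_-}$ coming from $B_+=\overline{B_-}$ converts two conjugacies into a genuine Morita equivalence between $\Bb_+$ and $\Bb_-$.

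Finally I would glue the two pieces. Both the equivalence on the $A$-factor and the one just produced on the $B$-factor are isomorphisms of complex Lie algebroids over $M$ covering the identity, and by construction each intertwines the respective anchors into $TM\otimes\CC$; hence they should restrict to an isomorphism of the fibre products over $T_{1,0}X_\pm$, which is the desired $\psi$, and invoking Proposition~\ref{holpoiss} once more identifies its source and target with the complex Lie algebroids of $\sigma_\pm$. The step I expect to be the main obstacle is precisely this gluing: one must check that the $A$- and $B$-isomorphisms are simultaneously compatible with the anchor-matching condition defining the fibre product, and in particular that the complex conjugation used to repair the $\Bb$-conjugacy does not disturb the identification on the $\Aa$-factor. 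Verifying that the self-conjugacy of $T_{1,0}X_+$ and the canonical isomorphism $A_+=A_-$ agree over their common image in $TM\otimes\CC$ is the heart of the matter; everything else is bookkeeping with the decomposition~\eqref{kahldeco}.
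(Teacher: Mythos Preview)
Your proposal is correct and follows essentially the same route as the paper: identify the Poisson algebroid as the fibre product $\Aa_\pm\oplus_{T_{1,0}X_\pm}\Bb_\pm$, use the standing Morita equivalence $A_+=A_-$ on the $\Aa$-factor, convert the Morita conjugacy $B_+=\overline{B_-}$ into a genuine equivalence on the $\Bb$-factor via the self-conjugacy of $T_{1,0}X_+$ afforded by the symplectic hypothesis, and then pass to fibre products. The paper's argument is in fact terser than yours on the gluing step you flag as the main obstacle; your caution there is warranted but the verification is routine, since all maps involved are isomorphisms of complex Lie algebroids over the identity on $M$ intertwining the anchors into $TM\otimes\CC$.
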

\subsection{Prequantization and holomorphic gerbes}

Geometric quantization of symplectic manifolds is perhaps best understood in the setting of K\"ahler geometry.  A symplectic manifold $(M,\omega)$ is said to be prequantizable when $[\omega]/2\pi\in H^2(M,\RR)$ has integral periods, i.e. is in the image of the natural map $H^2(M,\ZZ)\lra H^2(M,\RR)$.  A prequantization of such an integral symplectic form is a Hermitian complex line bundle $(L,h)$ equipped with a unitary connection $\nabla$ such that $F(\nabla) = i\omega$.  The presence of a complex structure $I$ compatible with $\omega$, sometimes called a complex polarization, then implies that $\nabla^{0,1}$ defines a holomorphic structure on the line bundle $L$, which is used to proceed with the geometric quantization procedure.  In this sense, we view a Hermitian holomorphic line bundle over a complex manifold $(M,I,L,h)$ as a prequantization of the K\"ahler structure $(M, I,\omega)$.  In this section, we seek an analogous result for generalized K\"ahler manifolds.

Our first task is to prequantize the underlying Courant algebroid $E$.  For this to be possible, we need the quantization condition that $[E]/2\pi\in H^3(M,\RR)$ has integral periods, and choose a Hermitian gerbe $(G,h)$ with unitary $0$-connection $\nabla$ such that the associated Courant algebroid $E_\nabla$ (via Corollary~\ref{unitgerb}) satisfies $[E_\nabla] = [E]$.  This is always possible since the map~\eqref{surjonint} is surjective onto classes vanishing in $H^3(M,\RR/\ZZ)$. 

If $E_\nabla$ carries a generalized complex structure $\JJ$, it immediately obtains Dirac structures $L,\overline{L}$ given by $\ker(\JJ\mp i)$, and by Theorem~\ref{diracflat}, this induces flat connections on $G$ over these Lie algebroids.  By analogy with vector bundles, we say that a gerbe with a flat $\overline{L}$-connection is a generalized holomorphic gerbe.
\begin{prop}\label{gcgerbe}
Let $\JJ$ be a generalized complex structure on $E_\nabla$, where $\nabla$ is a unitary $0$-connection on the Hermitian gerbe $(G,h)$. Then $G$ inherits a generalized holomorphic structure.  Furthermore, it has a flat Poisson connection for the underlying real Poisson structure. 
\end{prop}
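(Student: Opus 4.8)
The first assertion is essentially immediate from the machinery already in place. Writing $\overline L=\ker(\JJ+i)\subset E_\nabla\otimes\CC$ for the $-i$-eigenbundle of $\JJ$, the defining properties of a generalized complex structure make $\overline L$ a Dirac structure: since $\JJ$ is orthogonal with $\JJ^2=-1$ (hence $\JJ^*=-\JJ$), its eigenbundles are maximal isotropic, and $\overline L$ is involutive as the complex conjugate of the involutive bundle $L$. Thus $\overline L$ is an involutive isotropic subbundle of the complexified Courant algebroid associated to the unitary $0$-connection $\nabla$, and Theorem~\ref{diracflat} endows $G$ with a canonical flat $\overline L$-connection. By the definition adopted just above the proposition, a gerbe carrying a flat $\overline L$-connection is precisely a generalized holomorphic gerbe, which proves the first claim.

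For the second claim, the plan is to transport this flat $\overline L$-connection along a Lie algebroid morphism onto the Poisson Lie algebroid. Recall from~\eqref{relpoi} that the Baer sum $L^\top\boxtimes\overline L=\Gamma_{iQ/2}$ is the graph of a constant rescaling of the underlying real Poisson structure $Q=\pi\circ\JJ|_{T^*M}$. By Proposition~\ref{baerfiber}, this Baer sum coincides, as a Lie algebroid, with the fiber product $L\times_{TM\otimes\CC}\overline L$, and so it carries a natural projection morphism $p:\Gamma_{iQ/2}\lra\overline L$ covering $\id_M$ and intertwining the anchors. I would then pull back the flat $\overline L$-connection of the first part along $p$ to obtain a flat $\Gamma_{iQ/2}$-connection on $G$. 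Since the constant rescaling $\Gamma_{iQ/2}\cong\Gamma_Q\otimes\CC$ is an isomorphism of Lie algebroids, such a connection is exactly a flat Poisson connection for $Q$, in the same sense as in the vector-bundle statement following Corollary~\ref{transverpoi}.

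It remains to justify that pullback of a flat gerbe connection along $p$ is again flat. Given the data $\{\del^{\overline L}_{ij},B_i\}$ of the $\overline L$-connection as in Definition~\ref{gacon}, one applies the chain map $p^*:\Omega^\bullet_{\overline L}\lra\Omega^\bullet_{\Gamma_{iQ/2}}$ (a morphism of de Rham complexes, since $p$ is a Lie algebroid morphism and hence $p^*d_{\overline L}=d_{\Gamma_{iQ/2}}p^*$) to produce the $\Gamma_{iQ/2}$-connection $\{p^*\del^{\overline L}_{ij},\,p^*B_i\}$; its curving is $p^*(d_{\overline L}B_i)=p^*H$, which vanishes because $H=0$. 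This is the gerbe analogue of the module-pullback argument underlying Corollary~\ref{transverpoi}. I expect this functoriality check to be the only real content beyond bookkeeping, and hence the main obstacle: one must verify that the entire package of Definition~\ref{gacon}—the connections on the $L_{ij}$, the flatness of $\theta_{ijk}$, and the $1$-connection $B_i$—is natural under $p^*$, so that the pulled-back data is independent of the chosen local trivialization and patches to a genuine flat $\Gamma_{iQ/2}$-connection. The secondary subtlety, reconciling the complex Lie algebroid $\Gamma_{iQ/2}$ with the real Poisson structure $Q$, is dispatched by the constant rescaling isomorphism noted above.
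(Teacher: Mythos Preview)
Your proof is correct and follows essentially the same route as the paper. For the first claim you invoke Theorem~\ref{diracflat} exactly as the paper does, and for the second you use the fiber-product description from Proposition~\ref{baerfiber} and~\eqref{relpoi} to transport the flat $\overline L$-connection to $\Gamma_{iQ/2}$; the paper phrases this as tensoring $G$ with the trivial gerbe equipped with its canonical flat $L$-connection, which is the same operation as your pullback along the projection $\Gamma_{iQ/2}\to\overline L$ (this is precisely the ``in particular'' clause of Corollary~\ref{transverpoi} at the gerbe level).
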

\begin{proof}
$G$ inherits a flat $\overline{L}$-connection by Theorem~\ref{diracflat}.  To show that it has a flat Poisson connection, we note that the trivial gerbe has a canonical flat $L$-connection, and by tensoring with $G$ we obtain a flat connection on $G$, for the fiber product of $\overline{L}$ with $L$, which by~\eqref{relpoi} is the Lie algebroid of the Poisson structure $Q$ underlying $\JJ$, as required.  
\end{proof}

Applying the above result to each generalized complex structure separately, a generalized K\"ahler structure $(\JJ_+,\JJ_-)$ on $E_\nabla$ induces flat  $\overline{L}_\pm$-connections on $G$, rendering it generalized holomorphic with respect to both $\JJ_\pm$. 
\begin{corollary}\label{gholgerb}
Let $(\JJ_+,\JJ_-)$ be a generalized K\"ahler structure on $E_\nabla$, which is as above.  Then the gerbe $G$ inherits  generalized holomorphic structures over $\JJ_+$ and $\JJ_-$. In particular, it obtains flat Poisson connections over the real Poisson structures $Q_\pm$ underlying $\JJ_\pm$.
\end{corollary}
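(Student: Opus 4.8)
The plan is to obtain the corollary as a direct application of Proposition~\ref{gcgerbe}, invoked separately for each of the two generalized complex structures in the pair. The point to notice first is that a generalized K\"ahler structure $(\JJ_+,\JJ_-)$ on $E_\nabla$ is by definition a pair of generalized complex structures on one and the same Courant algebroid, so each of $\JJ_+$ and $\JJ_-$ satisfies the hypotheses of Proposition~\ref{gcgerbe} on its own. Neither the commutativity $\JJ_+\JJ_-=\JJ_-\JJ_+$ nor the positivity of the metric~\eqref{gkah} enters the argument; all that is used is the integrability of each $\JJ_\pm$ separately.

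First I would apply Proposition~\ref{gcgerbe} to $\JJ_+$. This furnishes the Dirac structure $\overline{L}_+=\ker(\JJ_++i)$, and Theorem~\ref{diracflat} then equips $G$ with a canonical flat $\overline{L}_+$-connection. Since $\overline{L}_+$ is the $-i$-eigenbundle of a generalized complex structure, this is precisely a generalized holomorphic structure over $\JJ_+$. The proposition also produces the flat Poisson connection over $Q_+=\pi\circ\JJ_+|_{T^*M}$: tensoring the flat $\overline{L}_+$-connection with the canonical flat $L_+$-connection on the trivial gerbe yields a flat connection over the fiber product of $\overline{L}_+$ with $L_+$ over $TM$, which by the Baer sum identity~\eqref{relpoi} is the Lie algebroid $\Gamma_{iQ_+/2}$ of the underlying real Poisson structure. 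I would then repeat this argument verbatim for $\JJ_-$, replacing $\overline{L}_+$ by $\overline{L}_-=\ker(\JJ_-+i)$, to obtain the generalized holomorphic structure over $\JJ_-$ together with the flat Poisson connection over $Q_-$.

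The two constructions are manifestly independent, since each one uses only the Dirac structure belonging to its own generalized complex structure, so there is no compatibility between them left to verify. For this reason the statement has no real obstacle: its entire content is already carried by Proposition~\ref{gcgerbe}, and the corollary simply records the application of that proposition to both members of the pair. The genuinely substantial question, namely how the two generalized holomorphic structures on $G$ interact---in parallel with the interplay of the holomorphic Courant algebroids $\EE_\pm$ studied in Section~\ref{Holomorphicdiracgeometry}---lies beyond this bookkeeping statement and would be the natural direction in which to push the argument further.
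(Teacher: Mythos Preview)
Your proposal is correct and matches the paper's approach exactly: the corollary is stated without its own proof, preceded only by the sentence ``Applying the above result to each generalized complex structure separately\ldots'', which is precisely the argument you give. Your observation that neither commutativity nor positivity is used is accurate and worth noting.
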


We may interpret the above generalized holomorphic structures in terms of the underlying bi-Hermitian geometry, as follows.
\begin{prop}
Let $(G,h,\nabla,\JJ_+,\JJ_-)$ be as above. Then $G$ inherits holomorphic structures with respect to the underlying complex manifolds $X_\pm$, defining holomorphic gerbes $\GG_\pm$.  Furthermore, $\GG_\pm$ inherit holomorphic $0$-connections $\del_\pm$, whose associated holomorphic Courant algebroids $\EE_\pm$ are given in Proposition~\ref{holcourants}. 
\end{prop}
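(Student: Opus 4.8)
The plan is to apply Theorem~\ref{holconn} separately to each of the two liftings supplied by the generalized K\"ahler decomposition. Recall from Proposition~\ref{holcourants} that $\overline\ell_+$ and $\overline\ell_-$ are liftings of $T_{0,1}X_+$ and $T_{0,1}X_-$ to $E_\nabla\otimes\CC$: each is isotropic, involutive, and projects isomorphically onto the antiholomorphic tangent bundle of the respective complex manifold. These are exactly the hypotheses of Theorem~\ref{holconn}, applied with the complex manifold $X_\pm$, the smooth Hermitian gerbe $G$ with its unitary $0$-connection $\nabla$, and the lifting $D=\overline\ell_\pm$. That theorem then yields at once both a holomorphic structure on $G$ with respect to $X_\pm$, defining the holomorphic gerbes $\GG_\pm$, and a canonical holomorphic $0$-connection $\del_\pm$ on $\GG_\pm$. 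Thus the first two assertions require no additional argument beyond invoking Theorem~\ref{holconn} twice.

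What remains is to identify the holomorphic Courant algebroid associated to $(\GG_\pm,\del_\pm)$, via the holomorphic version of Theorem~\ref{corresp}, with the reduction $\EE_\pm$ of Proposition~\ref{holcourants}. Both objects are produced by reducing the same prequantum data along the same lifting $\overline\ell_\pm$: at the level of gerbes-with-$0$-connection by Theorem~\ref{holconn}, and at the level of Courant algebroids by Theorem~\ref{courvers} (this being how $\EE_\pm$ was defined). The conceptual content of the identification is that the assignment of a Courant algebroid to a $0$-connection, Theorem~\ref{corresp}, intertwines these two reduction procedures, so that reducing $(G,\nabla)$ and then passing to the associated Courant algebroid agrees with passing to $E_\nabla$ and then reducing by $\overline\ell_\pm$.

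I would verify this by comparing the two constructions in a common local trivialization. Writing $E_\nabla$ as glued by $e^{F_{\nabla_{ij}}}$ and the lifting as the graph of forms $\theta_i\in\Omega^{(1,1)+(0,2)}(U_i)$, the proof of Theorem~\ref{holconn} performs a gauge transformation by potentials $\alpha_i$ with $(d\alpha_i)^{(1,1)+(0,2)}=\theta_i$, after which the $0$-connection has curvatures $F^\alpha_{ij}=F_{\nabla_{ij}}+d\alpha_i-d\alpha_j$ of pure type $(2,0)$. These holomorphic $2$-forms are precisely the \v{C}ech cocycle used in Example~\ref{genex} and Theorem~\ref{courvers} to glue the reduced holomorphic Courant algebroid, so the algebroid associated to $\del_\pm$ is $\EE_\pm$, with isomorphism class $[2H^{(2,1)_\pm}]$ as computed in Proposition~\ref{holcourants}. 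The main obstacle is exactly this compatibility of the gerbe-level and Courant-level reductions; once both sides are expressed through the $(2,0)$-type gluing data in the same trivialization, it reduces to a direct comparison of cocycles.
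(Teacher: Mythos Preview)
Your proposal is correct and follows essentially the same approach as the paper: apply Theorem~\ref{holconn} to the liftings $\overline\ell_\pm$ of $T_{0,1}X_\pm$ to obtain the holomorphic gerbes with holomorphic $0$-connections, and then identify the associated holomorphic Courant algebroids with the reductions $\EE_\pm$ from Proposition~\ref{holcourants}. The paper's own proof is a one-sentence invocation of Theorem~\ref{holconn} and simply asserts the identification of the associated Courant algebroid with $\EE_\pm$; your second and third paragraphs supply a local verification of that compatibility (matching the $(2,0)$ gluing cocycle produced by the gauge transformation in Theorem~\ref{holconn} with the cocycle defining the reduction in Theorem~\ref{courvers}), which the paper leaves implicit.
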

\begin{proof}
The commuting of $\JJ_\pm$ induces the decomposition~\eqref{defell}, and since $\overline{\ell}_\pm$ are liftings of $T_{0,1}X_\pm$, Theorem~\ref{holconn} implies that the gerbe $G$ inherits the structure of a holomorphic gerbe with holomorphic $0$-connection over $X_\pm$, with associated holomorphic Courant algebroid $\EE_\pm$ given by the reduction of Courant algebroids in Proposition~\ref{holcourants}.
\end{proof}
\begin{remark}
The fact that the gerbe $G$ inherits a pair of holomorphic structures with respect to $X_\pm$ was proposed in the papers~\cite{MR2607445, MR2276362, MR2565045}, which contain more insights concerning the prequantization of generalized K\"ahler geometry than are formalized here. 
\end{remark}

By the holomorphic reduction procedure developed in Section~\ref{holreduct}, we saw that the Dirac structures $\overline{L}_\pm$ reduce to the pair of holomorphic Dirac structures $\Aa_\pm,\Bb_\pm\subset\EE_\pm$.  Theorem~\ref{diracflat} then immediately yields the following result, which may be interpreted as establishing a relationship between the holomorphic gerbes $\GG_\pm$ over $X_\pm$ deriving from the fact that $X_\pm$ participate in a generalized K\"ahler structure.

\begin{theorem}
Let $(\GG_\pm,\del_\pm)$ be the holomorphic gerbe with $0$-connection obtained as above from a generalized K\"ahler structure with prequantized Courant algebroid.  Then $\GG_\pm$ has flat connections over the holomorphic Lie algebroids $\Aa_\pm,\Bb_\pm$, and consequently has a flat Poisson connection with respect to the holomorphic Poisson structure $\sigma_\pm$. 
\end{theorem}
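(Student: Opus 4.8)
The plan is to obtain the theorem as a direct application of Theorem~\ref{diracflat} to the holomorphic gerbe with $0$-connection $(\GG_\pm,\del_\pm)$. By the preceding proposition, the holomorphic Courant algebroid associated to $(\GG_\pm,\del_\pm)$ is precisely $\EE_\pm$. First I would invoke Theorem~\ref{maintransv}, which exhibits $\EE_\pm=\Aa_\pm\oplus\Bb_\pm$ as a sum of transverse holomorphic Dirac structures; in particular each of $\Aa_\pm$ and $\Bb_\pm$ is an involutive isotropic subbundle of the Courant algebroid attached to $(\GG_\pm,\del_\pm)$. Applying Theorem~\ref{diracflat} separately with $D=\Aa_\pm$ and with $D=\Bb_\pm$ then endows $\GG_\pm$ with the desired canonical flat $\Aa_\pm$- and $\Bb_\pm$-connections.

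The point requiring care, and the step I expect to be the main obstacle, is that Theorem~\ref{diracflat} was stated and proved in the smooth category, whereas here everything is holomorphic. I would check that the construction in its proof transports verbatim: choosing a local trivialization of $(\GG_\pm,\del_\pm)$, forming the $0$-connection $\del^D_{ij}=\pi|_D^*\circ(\del_\pm)_{ij}$ by composition with the anchor, and restricting the antisymmetric pairing $\IP{\cdot,\cdot}_-$ to the local models $D_i\subset\EE_\pm|_{U_i}$ to produce the $1$-connection forms $B_i$. Since $\del_\pm$ is a holomorphic $0$-connection and the defining data of $\Aa_\pm,\Bb_\pm$ are holomorphic, the resulting connection forms are holomorphic, and the flatness of the connection -- i.e.\ the vanishing of its curving -- follows exactly as in the smooth case from the closedness of $\IP{\cdot,\cdot}_-$ on an isotropic integrable subbundle (Theorem~2.3.6 of~\cite{MR998124}), now read in the holomorphic Courant algebroid $\EE_\pm$. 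Everything else is a matter of assembling results already in place.

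For the ``consequently'' clause, I would use the fact, established in Proposition~\ref{holpoiss} together with Proposition~\ref{baerfiber}, that the holomorphic Poisson Lie algebroid $\Gamma_{\sigma_\pm}=\Aa_\pm^\top\boxtimes\Bb_\pm$ coincides with the fiber product of $\Aa_\pm$ and $\Bb_\pm$ over $T_{1,0}X_\pm$. This fiber product carries a canonical Lie algebroid morphism $\Gamma_{\sigma_\pm}\lra\Aa_\pm$, along which the flat $\Aa_\pm$-connection just produced pulls back to a flat $\Gamma_{\sigma_\pm}$-connection; this is precisely the gerbe analogue of the assertion in Corollary~\ref{transverpoi} that any $\Aa_\pm$-module is automatically a $\sigma_\pm$-Poisson module. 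The resulting flat connection over $\Gamma_{\sigma_\pm}$ is, by definition, a flat Poisson connection with respect to $\sigma_\pm$, which completes the argument.
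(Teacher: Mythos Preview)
Your proposal is correct and follows essentially the same route as the paper: invoke Theorem~\ref{maintransv} for the decomposition $\EE_\pm=\Aa_\pm\oplus\Bb_\pm$, apply Theorem~\ref{diracflat} to each summand, and then use Propositions~\ref{holpoiss} and~\ref{baerfiber} to identify $\Gamma_{\sigma_\pm}$ with the fiber product and pass to the Poisson connection. Your explicit verification that Theorem~\ref{diracflat} carries over to the holomorphic setting is a point the paper leaves implicit, and your derivation of the Poisson connection by pulling back along the projection $\Gamma_{\sigma_\pm}\to\Aa_\pm$ is a minor variant of the paper's (which, following the pattern of Proposition~\ref{gcgerbe}, combines both flat connections via the fiber product), but both yield the same conclusion.
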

\begin{proof}
The decomposition $\EE_\pm =\Aa_\pm \oplus\Bb_\pm$ obtained in Theorem~\ref{maintransv} implies, by Theorem~\ref{diracflat}, that $\GG_\pm$ obtains flat connections over $\Aa_\pm$ and $\Bb_\pm$. By Proposition~\ref{holpoiss}, the Baer sum $\Aa_\pm^\top\boxtimes\Bb_\pm$ yields the Lie algebroid of the holomorphic Poisson structure $\sigma_\pm$, and since the Baer sum coincides with the fiber product of Lie algebroids (Proposition~\ref{baerfiber}), we obtain a flat Poisson connection on $\GG_\pm$ with respect to $\sigma_\pm$.
\end{proof}

\bibliographystyle{utphysmod}
\bibliography{GKG}
\end{document}